\documentclass{amsart}
\usepackage{amsthm,amstext,amsmath,amscd,amssymb,latexsym, mathrsfs}
\usepackage{color}
\usepackage{graphicx}
\usepackage[T1]{fontenc}
\usepackage{txfonts}

\usepackage[matrix,arrow]{xy}

\usepackage{amsfonts,enumerate,array,hyperref}



\newcommand{\C}{{\mathbb C}}

\newcommand{\K}{{\mathbb K}}

\newcommand{\N}{{\mathbb N}}
\renewcommand{\P}{{\mathbb P}}
\newcommand{\Q}{{\mathbb Q}}

\DeclareMathOperator{\lcm}{LCM}




\newcommand{\cO}{{\mathcal O}}






\newcommand{\punkt}{\HHspace{-.3ex}\raise.15ex\HHbox to1ex{\HHuge.}}

\DeclareMathOperator{\rank}{rank}



\newcommand{\CC}{\C}

\newcommand{\QQ}{\Q}



\newcommand{\paper}{: \begin{it}}
\newcommand{\jour }{, \end{it}}

\newtheorem{theorem}{Theorem}[section]
\newtheorem{lemma}[theorem]{Lemma}
\newtheorem{proposition}[theorem]{Proposition}
\newtheorem{corollary}[theorem]{Corollary}

\theoremstyle{definition}
\newtheorem{definition}[theorem]{Definition}
\newtheorem{notation}[theorem]{Notation}
\newtheorem{example}[theorem]{Example}

\newtheorem{remark}[theorem]{Remark}

\numberwithin{equation}{section}



\begin{document}
\title[Eigenschemes and the Jordan canonical form]{Eigenschemes and the Jordan canonical form}
\author[H. Abo]{Hirotachi Abo}
	\address{Department of
             Mathematics, University of Idaho, Moscow, ID 83884, USA}
          \email{abo@uidaho.edu}
     	\urladdr{http://www.webpages.uidaho.edu/$\sim$abo}
\author[D. Eklund]{David Eklund}
	\address{Prover Technology AB, Rosenlundsgatan 54, 118 63 Stockholm, Sweden} 
	\email{daek@kth.se} 
\author[T. Kahle]{Thomas Kahle}
	\address{Fakult\"at f\"ur Mathematik, OvGU Magdeburg,  39106 Magdeburg, Germany}
         \email{thomas.kahle@ovgu.de}
         \urladdr{http://www.thomas-kahle.de/}
\author[C. Peterson]{Chris Peterson}
    \address{Department of
             Mathematics, Colorado State University, Fort Collins, CO 80523, USA}
    \email{peterson@math.colostate.edu}
    \urladdr{http://www.math.colostate.edu/$\sim$peterson}
\keywords{Matrix, eigenvector, generalized eigenvector, Jordan canonical form, ideal, primary decomposition, binomial ideal, Hilbert polynomial, tangent bundle, scheme, curvilinear}
\subjclass[2010]{Primary 15A18; Secondary 13C40, 13P10, 14M12, 15A21, 15A22}
\begin{abstract}
  We study the \emph{eigenscheme} of a matrix which encodes
  information about the eigenvectors and generalized eigenvectors of a
  square matrix. The two main results in this paper are
  a decomposition of the eigenscheme of a matrix into primary
  components and the fact that 
  this decomposition encodes the numeric data of the Jordan canonical
  form of the matrix.  We also describe how the eigenscheme can be
  interpreted as the zero locus of a global section of the tangent
  bundle on projective space. This interpretation allows one to see
  eigenvectors and generalized eigenvectors of matrices from an
  alternative viewpoint.
\end{abstract}

\maketitle

\setcounter{tocdepth}{1}
\tableofcontents

\section{Introduction}
\subsection*{Motivation}
Let $\K$ be a field about which we make no a priori assumptions.
For any $r\times r$ matrix $A \in \K^{r\times r}$, a non-zero vector $\boldsymbol v \in \K^r$ is an {\it eigenvector} if $A \boldsymbol v$ is a scalar multiple of $\boldsymbol v$ or equivalently if $A \boldsymbol v$ and $\boldsymbol v$ are linearly dependent. The span of $\boldsymbol v$ is a one dimensional subspace of $\K^r$ and its projectivization can thus be viewed as a point in the projectivization of $\K^r$. In other words, the  subspace of $\K^r$ spanned by an eigenvector $\boldsymbol v$ of $A$ determines a point $[\boldsymbol v] \in \P^{r-1}$.  Let $R = \K[x_1, \dots, x_r]$, let $\boldsymbol x$ be the column vector of variables, and let $(A\boldsymbol x \ | \ \boldsymbol x)$ be the $r \times 2$ matrix obtained by concatenating the two $r \times 1$ matrices $A\boldsymbol x$ and $\boldsymbol x$ side by side. Then a non-zero vector $\boldsymbol v \in \K^r$ is an eigenvector of $A$ if and only if the matrix obtained from $(A\boldsymbol x \ | \ \boldsymbol x)$ by evaluating at $\boldsymbol v$ has rank one. As a consequence, $\boldsymbol v$ is an eigenvector of $A$ if and only if it is in the common zero locus of the set of all $2 \times 2$ minors of~$(A\boldsymbol x \ | \ \boldsymbol x)$. Let $I_A$ denote the ideal generated by the $2 \times 2$ minors of~$(A\boldsymbol x \ | \ \boldsymbol x)$. The set of all projectivized eigenvectors of $A$, denoted $E_A$, is the algebraic subset of $\P^{r-1}$ defined by the homogeneous ideal $I_A$.

The purpose of this paper is to study the {\it scheme} $Z_A$ of~$I_A$. We call $Z_A$ the {\it eigenscheme} of $A$. If $A$ is not diagonalizable, then the geometry of $Z_A$ is richer than the geometry of~$E_A$. Note that $E_A$ is the union of linear subvarieties of $\P^{r-1}$ (corresponding to the projectivization of the eigenspaces of~$A$) while $Z_A$ is a scheme supported on $E_A$. We show that the non-reduced structure of $Z_A$ carries information about the generalized eigenvectors of $A$ encoded by the nilpotent Jordan structure.  We also show that the vector of $2 \times 2$ minors of~$(A\boldsymbol x \ | \ \boldsymbol x)$ can be identified with a global section of the tangent bundle on $\P^{r-1}$ and that $Z_A$ can be identified with the zero scheme of this section.
Some of the properties of the eigenscheme can therefore be derived from properties of the tangent bundle and certain problems concerning eigenvectors of matrices can be naturally translated into problems concerning the tangent bundle on $\P^{r-1}$.

\subsection*{Results}
To study $Z_A$, we first find a primary decomposition of $I_A$. Each primary component of $I_A$ is associated to a particular eigenvalue of $A$.   If the characteristic plynomial of $A \in \K^{r \times r}$ splits in $\K$, then $A$ is similar to a matrix $J$ in Jordan canonical form.  Since $J$ is unique up to a permutation of the Jordan blocks, it can be reconstructed if one knows the following two things for each eigenvalue $\lambda$ of $A$; namely, (i) the number of different Jordan blocks with eigenvalue $\lambda$ and (ii) the size of each Jordan block. We show that the primary decomposition of $I_A$  contains sufficient information to deduce each of (i), (ii). Indeed, a primary component of $I_A$ is associated to the collection of all Jordan blocks of $A$ that are of the same size and have the same eigenvalue. Furthermore, the dimension of the primary component corresponds to the number of Jordan blocks of the given size and its degree corresponds to the size of each Jordan block.  Our strategy for finding a primary decomposition of~$I_A$ is as follows.

Let $A,B \in \K^{r\times r}$. Suppose that $A$ is similar to $B$, i.e., there exists an invertible $C \in \K^{r\times r}$ such that $A = C^{-1}BC$. Then $I_A$ and $I_B$ differ only by the linear change of variables determined by $C$. Equivalently the eigenschemes of $A$ and $B$ differ only by the automorphism of $\P^{r-1}$ induced by $C$ (see Proposition~\ref{prop:similar} for more details). This means that from a primary decomposition of $I_B$, one can obtain a primary decomposition of $I_A$ by applying the linear change of variables determined by $C$. Thus, without significant loss of generality, we may assume that $A$ is a Jordan matrix~$J$, at the expense of possibly a finite extension of~$\K$.

The key idea for finding a primary decomposition of $I_J$ is to
decompose $I_J$ into ideals, each of which is paired with a different
eigenvalue of~$J$.
This decomposition allows us to reduce the problem of finding a
primary decomposition of the ideal of a Jordan matrix to the problem
of finding a primary decomposition of the ideal of a
Jordan matrix with a single eigenvalue. The ideal $I_J$ associated
with a Jordan matrix $J$ with a single eigenvalue is a \emph{binomial
ideal}, i.e., an ideal generated by {\it binomials}, polynomials with
at most two terms. We use the general theory of binomial ideals
developed by D.~Eisenbud and B.~Sturmfels in~\cite{eisenbud-sturmfels}
to construct a primary decomposition of $I_J$. In the following two
paragraphs, we illustrate this idea in more detail.

First suppose that $J$ has two or more distinct eigenvalues
$\lambda_1,\dots,\lambda_n \in K$.  After a suitable permutation of
Jordan blocks, $J$ can be written as a block diagonal matrix whose
main diagonal matrices are the Jordan matrices
$J_{\lambda_1}, \dots, J_{\lambda_n}$ with single
eigenvalues~$\lambda_1, \dots, \lambda_n$.
For each $i \in \{1, \dots, n\}$, let $r_i$ be the size of
$J_{\lambda_i}$ and let $\boldsymbol x^{(i)}$ be the column vector of
the variables $x_1^{(i)}, \dots, x_{r_i}^{(i)}$. Consider the
polynomial
ring~$R = \K\left[\left. x_1^{(i)}, \cdots, x_{r_i}^{(i)} \ \right| \
  1 \leq i \leq n \right]$.
Theorem~\ref{thm:decomposition} shows that $I_J$ can be written as the
intersection of the ideals, each of which is generated by the
$2 \times 2$ minors of the $r \times 2$ matrix
$\left(\left. J_{\lambda_i}\boldsymbol x^{(i)} \ \right| \ \boldsymbol
  x^{(i)}\right)$
and the variables that are not paired with~$J_{\lambda_i}$. This
indicates that, to find a primary decomposition of the ideal of a
general Jordan matrix, it is sufficient to find a primary
decomposition of the ideal of a Jordan matrix with a single
eigenvalue.
 
If $J$ has a single eigenvalue, then $I_J$ is a binomial ideal. In~\cite{eisenbud-sturmfels}, D.~Eisenbud and B.~Sturmfels showed that, over an algebraically closed field, every binomial ideal has a primary decomposition into primary binomial ideals.  They also provided algorithms for finding such a primary decomposition.  Several improvements of the decomposition theory and the algorithm have been implemented (see, for example, \cite{castilla-sanchez,kahle2010, kahle2012, KM2014}). The first step of a binomial ideal decomposition is to decompose the ideal into \emph{cellular} binomial ideals, modulo which every variable is either a non-zerodivisor or nilpotent.  In Proposition~\ref{prop:decomposition} we give a cellular decomposition which turns out to be a primary decomposition too.  Cellular decomposition is field independent and thus the field assumptions from binomial primary decomposition do not conern us much.  When talking about a given matrix $A$, though, we must often assume that $\K$ contains the eigenvalues of~$A$.

\subsection*{Jordan canonical forms in commutative algebra}
The study of commutative algebra aspects 
of the Jordan canonical form can be extended to
(1) the Kronecker--Weierstrass theory of matrix pencils 
and (2) the theory of eigenvectors of tensors.  
We now briefly discuss commutative 
algebra perspectives on~(1) and~(2).  
For the results we mention in this subsection, 
assumptions on the field $\K$ may be necessary.  An algebraically
closed field of characteristic zero is sufficient in any case, but
weaker assumptions often suffice.  
The interested reader should consult the specific references in each
case.

(1) Let $R = \K[x_1,\dots, x_r]$, let $M$ be an $s \times 2$ matrix of
linear forms from $R$, and let~$I_M$ be the ideal generated by the
$2 \times 2$ minors of $M$. The height of $I_M$ is known to be less
than or equal to $s-1$.  If equality holds, then the Eagon-Northcott
complex of $M^T$ (considered as a graded homomorphism between two free
graded $R$-modules) is a minimal free resolution of $R/I_M$.
M.~L.~Catalano-Johnson studied the minimal free resolution of $I_M$ in
some cases where $I_M$ does not have the expected
codimension~\cite{catalano-johnson} as follows.

Let $A, B \in \K^{s\times r}$ such that
$M = (A \boldsymbol x \ | \ B \boldsymbol x)$.  The
Kronecker-Weierstrass normal form of the pencil of $A$ and $B$ is used
to transform $M$ to Kronecker-Weierstrass form which is
another~$s \times 2$ matrix~$KW(M)$ of linear forms.  The matrix
$KW(M)$ is a concatenation of ``scroll blocks,'' ``nilpotent blocks,''
and/or ``Jordan blocks'' (see~\cite{catalano-johnson} for the
definitions of these different types of blocks). Because of the
way~$KW(M)$ is constructed, the ideals~$I_M$ and $I_{KW(M)}$ differ
only by a linear change of variables.  M.~L.~Catalano-Johnson used
$KW(M)$ to study homological aspects of $R/I_M$. This work includes an
explicit construction of the minimal free resolution of $R/I_M$
when~$KW(M)$ simultaneously has at least one Jordan block with
eigenvalue $0$ and no nilpotent blocks. This result was extended to
the general case by R.~Zaare-Nahandi and
R.~Zaare-Nahandi~\cite{zaare-nahandi} in~2001.

Recently, H.~D.~Nguyen, P.~D.~Thieu, and T.~Vu~\cite{nguyen-thieu-vu}
showed that $R/I_M$ is Koszul if and only if the largest length of a
nilpotent block of $KW(M)$ is at most twice the smallest length of a
scroll block. This result settled a conjecture of A.~Conca.

The algebraic set defined by $I_M$ consists of points of $\P^{r-1}$, each
of which is the equivalence class of a generalized eigenvector of $A$
and~$B$.  A possible extension of our original question is ``What is a
decomposition of the scheme of $I_M$ into primary
components?''  We have recently learned that work on this question is
under way by H.~D.~Nguyen and M.~Varbaro who kindly informed us about
their progress.

(2) The concept of eigenvectors of matrices was recently generalized to tensors by L.-H.~Lim~\cite{lim} and L.~Qi~\cite{qi} independently, and algebro-geometric aspects of tensor eigenvectors were studied by several authors (see, for example, ~\cite{Rob, ORS, ASS}). 

The eigenscheme of a tensor can be defined analogously. Another
possible extension of our original question is ``What is the primary
decomposition of the ideal of such a scheme?'' The decomposition of
the scheme of so-called orthogonally decomposable tensors into primary
components was described by E.~Robeva~\cite{Rob}.  However, the
primary decomposition of the ideal of the eigenscheme of a tensor is
not yet well understood.

\subsection*{Organization} In Section~\ref{sec:eigenschemes} we
introduce notation that is used throughout the paper, define the
eigenscheme of a matrix, and discuss a few examples of eigenschemes.
We also show that if two $r \times r$ matrices are similar, then the
corresponding eigenschemes differ only by an automorphism
of~$\P^{r-1}$.  The goal of Section~\ref{sec:jordan-single} is to find
a primary decomposition of the ideal $I_J$ of a Jordan matrix $J$ with
a single eigenvalue.  As stated before, the first step is to construct
a cellular decomposition $\bigcap_{i=1}^\ell I_i$ of~$I_J$. We then
show that $I_i$ is primary for each $i \in \{1, \dots, \ell\}$.  We
also compute the reduced Gr\"obner basis for each $I_i$ which enables
us to describe the Hilbert polynomial of the quotient ring
modulo~$I_i$.  In Section~\ref{sec:jordan}, we construct a primary
decomposition of the ideal of a general Jordan matrix. We use the
primary decomposition of such an ideal to prove that, assuming all
eigenvalues lie in~$\K$, a square matrix is diagonalizable if and only
if the ideal of the matrix is radical.  In
Section~\ref{sec:tangent-bundle}, we show how the eigenscheme of an
$r \times r$ matrix and the zero scheme of a global section of the
tangent bundle on $\P^{r-1}$ are related.  The characterization of
diagonalizable matrices allows a characterization of the hypersurface
formed by non-diagonalizable matrices, which we call the discriminant
hypersurface.  In Section~\ref{sec:discriminant}, we show that the
degree of the discriminant hypersurface can be expressed as a function
of a Chern class of the tangent bundle.

\subsection*{Acknowledgements} The first author would like to thank
the Simons Institute for the Theory of Computing at the University of
California at Berkeley, where much of this work was done. The third
author is supported by the research focus dynamical systems of the
state Saxony-Anhalt. The fourth author acknowledges support from
NSF~1228308, NSF~1322508.  The authors thank Ezra Miller for his 
valuable comments and suggestions to improve the quality of the paper.
\section{Eigenvectors and eigenschemes} 
\label{sec:eigenschemes}
In this section we define an eigenscheme of a square matrix and
discuss several examples of eigenschemes.  We begin this section by
introducing notations we use through the paper.

Let $A \in \K^{r\times s}$.  For each $i \in \{1,\dots,r\}$ and for each $j \in \{1, \dots, s\}$, we write $A[i,j]$ for the $(i,j)$-entry of $A$.  For a given $i \in \{1, \dots, r\}$,  $A[i,:]$ denotes the $i$-th row of $A$.
If $A \in \K^{r\times s}$ and $B \in \K^{r'\times s'}$ then $A \oplus B$ denotes the ($r+r')\times (s+s')$  {\it direct sum matrix} of $A$ and $B$.

Suppose that $A$ and $B$ have the same number of rows, i.e., $r=r'$, then we write $(A | B)$ for the matrix obtained by concatenating $A$ and $B$ side by side. 

%
Let $R$ be the graded polynomial ring $\K[x_1, \cdots, x_r] $ with standard grading,
and $\boldsymbol x$ be the column vector of its variables.  For any matrix $A \in  \K^{r\times r}$ let $L_A$ be the set of $2 \times 2$ minors of the augmented matrix $(A \boldsymbol x \ | \ \boldsymbol x)$.
Let $I_A$ be the homogeneous ideal generated by $L_A$. Then $\boldsymbol v \in \K^r \setminus \{\mathbf{0}\}$ is an eigenvector of $A$ if and only if the equivalence class $[\boldsymbol v] \in \P^{r-1} = \P(\K^r)$ containing $\boldsymbol v$ lies in the algebraic set $V(I_A)$ defined by~$I_A$. 
\begin{definition}
\label{def:eigenscheme}
For a given $A \in \K^{r \times r}$, the closed subscheme of
$\P^{r-1}$ associated to $I_A$ is the {\it eigenscheme} of~$A$.
\end{definition}
\begin{definition}
\label{def:linearspan}
The {\it scheme-theoretic linear span} of a subscheme $Z \subseteq
\P^{r-1}$ is the smallest linear subspace $L \subseteq \P^{r-1}$ such
that $Z$ is a subscheme of $L$.
\end{definition}
\begin{example}
Let
$A = \left(\begin{smallmatrix}
4 & 0 & 1 \\
2 & 3 & 2 \\
1 & 0 & 4
\end{smallmatrix}
\right) \in\QQ^{3\times 3}$.
Then $I_A$ is generated by the three quadrics
\begin{eqnarray*}
Q_0 & = & (4x_1+x_3)x_2-(2x_1+3x_2+2x_3)x_1,  \\
Q_1 & = & (4x_1+x_3)x_3-(x_1+4x_3)x_1, \\
Q_2 & = & (2x_1+3x_2+2x_3)x_3-(x_1+4x_3)x_2.  
\end{eqnarray*}
The decomposition of $I_A$ into primary ideals is 
\[
I_A = \langle x_1+x_3 \rangle \cap \langle x_2-2x_3, \ x_1-x_3\rangle. 
\]
Both $\langle x_1+x_3 \rangle$ and $\langle x_2-2x_3, \ x_1-x_3\rangle$ are minimal associated primes of $\QQ[x_1,x_2,x_3]/I_A$, and hence $V(I_A)$ consists of the projective line in $\P^2$ defined by $x_1+x_3=0$ and the point in $\P^2$ defined by the intersection of the two projective lines $x_2-2x_3=0$ and $x_1-x_3=0$.

It is straightforward to see that  the affine cones over $V(x_1+x_3)$ and $V(x_2-2x_3, \ x_1-x_3)$ are the eigenspaces corresponding to the eigenvalues $3$ and $5$ respectively. In particular, since $V(I_A)$ contains three linearly independent eigenvectors, the matrix $A$ is diagonalizable. 
\end{example}
\begin{example}
Let
$A = \left(\begin{smallmatrix}
2 & 1 & 1 \\
0 & 1 & 1 \\
0 & 0 & 1
\end{smallmatrix}
\right) \in \QQ^{3\times 3}$.
Its eigenvalues are $2$ and~$1$ and $I_A$ is generated by
\begin{eqnarray*}
Q_0 & = &  x_1x_2+x_2^2-x_1x_3+x_2x_3 \\
Q_1 & = & x_1x_3+x_2x_3+x_3^2 \\
Q_2 & = & x_3^2. 
\end{eqnarray*}
The primary decomposition of $I_A$ is 
\begin{equation*}
 I_A  = \langle x_1+x_2+2x_3,x_3^2 \rangle \cap \langle x_2, x_3\rangle. 
\end{equation*}
In other words, the eigenscheme of $A$ is the union of the
zero-dimensional subscheme of $\P^2$ with length $2$ defined by
$\langle x_1+x_2+2x_3,x_3^2\rangle$ and the point of $\P^2$ defined
by~$\langle x_2, x_3\rangle$.
The associated primes of $\QQ[x_1,x_2,x_3]/I_A$ are
$\langle x_1+x_2,x_3 \rangle$ and $\langle x_2,x_3 \rangle$. This
means that $V(I_A) = V(x_1+x_2,x_3) \cup V(x_2,x_3)$, which implies
that the eigenspaces corresponding to the eigenvalues $1$ and $2$ both
have dimension~$1$. Hence $A$ is not diagonalizable.
The projective line defined by $x_1+x_2+2x_3=0$ is the
scheme-theoretic linear span of the scheme defined by $ \langle
x_1+x_2+2x_3, x_3^2\rangle$.  This projective line coincides with the
generalized eigenspace of $A$ corresponding to the eigenvalue $1$.
Proposition~\ref{prop:hilbertFunction} shows that the degree of the
scheme of $\langle x_1+x_2+2x_3,x_3^2 \rangle$ corresponds to the size
of the Jordan block with eigenvalue $1$.
\end{example}
The example illustrates that the geometry of the eigenscheme encodes
information about the Jordan structure of a matrix.  One of the goals
of this paper is to find the decomposition of an eigenscheme into
irreducible subschemes and relate this to the Jordan canonical form of
the corresponding matrix. The following proposition shows that it is
sufficient to study eigenschemes of Jordan matrices.
\begin{proposition}
\label{prop:similar}
Let $A, B \in \K^{r \times r}$ be similar. Then the eigenschemes of $A$ and $B$ differ only by an automorphism of $\P^{r-1}$. 
\end{proposition}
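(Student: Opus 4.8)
The plan is to make precise the assertion from the introduction that $I_A$ and $I_B$ differ only by the linear change of variables coming from the similarity matrix, and then to translate this statement about homogeneous ideals into the claimed statement about closed subschemes of $\P^{r-1}$.

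First I would record an elementary invariance property of ideals of $2\times 2$ minors: if $N$ is an $r\times 2$ matrix with entries in $R$ and $P \in \GL_r(\K)$, then the ideal generated by the $2\times 2$ minors of $N$ equals the ideal generated by the $2\times 2$ minors of $PN$. Indeed, each row of $PN$ is a $\K$-linear combination of the rows of $N$, so by bilinearity and alternation of the $2\times 2$ determinant, every $2\times 2$ minor of $PN$ is a $\K$-linear combination of $2\times 2$ minors of $N$; this gives one inclusion, and applying the same observation to $P^{-1}$ gives the reverse one.

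Next, write $A = C^{-1}BC$ with $C \in \GL_r(\K)$, so that $CA = BC$ and hence $CA\boldsymbol x = BC\boldsymbol x$. Left-multiplying $(A\boldsymbol x \mid \boldsymbol x)$ by $C$ gives
\[
  C\,(A\boldsymbol x \mid \boldsymbol x) \;=\; (CA\boldsymbol x \mid C\boldsymbol x) \;=\; (BC\boldsymbol x \mid C\boldsymbol x),
\]
and by the invariance property the ideal generated by its $2\times 2$ minors is still $I_A$. Now let $\phi\colon R \to R$ be the graded $\K$-algebra automorphism determined by $\phi(x_i) = (C\boldsymbol x)_i$ for $1\le i\le r$; it is an automorphism because $C$ is invertible, the inverse being the map determined by $C^{-1}$. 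Since substitution of linear forms into a matrix of linear forms commutes with taking determinants, applying $\phi$ to the generators of $I_B$ (the $2\times 2$ minors of $(B\boldsymbol x \mid \boldsymbol x)$) produces exactly the $2\times 2$ minors of $(BC\boldsymbol x \mid C\boldsymbol x)$. Hence $\phi(I_B) = I_A$.

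Finally, a graded $\K$-algebra automorphism of $R$ such as $\phi$ induces an automorphism $\Phi$ of $\P^{r-1}$, and since $\phi(I_B) = I_A$ the automorphism $\Phi$ carries the closed subscheme of $\P^{r-1}$ defined by $I_B$ isomorphically onto the one defined by $I_A$; that is, it carries the eigenscheme of $B$ onto the eigenscheme of $A$. The argument is mostly bookkeeping: the only genuine ingredient is the minor-ideal invariance under left multiplication by $C$, and the only point that needs care is the direction of the substitution, where one uses the relation $CA = BC$ rather than $A = C^{-1}BC$ directly.
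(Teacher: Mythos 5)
Your proof is correct and follows essentially the same route as the paper: both hinge on the observation that left-multiplying $(A\boldsymbol x \mid \boldsymbol x)$ by $C$ does not change the ideal of $2\times 2$ minors (the paper phrases this via the induced map $\bigwedge^2 C$, you via bilinearity of the determinant), combined with the relation $CA = BC$ and the substitution $\boldsymbol x \mapsto C\boldsymbol x$. Your version merely spells out the invariance lemma in more elementary detail; no substantive difference.
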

\begin{proof}
Let $A \in \K^{r \times r}$, let $C \in GL(r, \K)$, and let $\bigwedge^2 C : \bigwedge^2 R_1^r \rightarrow \bigwedge^2 R_1^r$ be the linear transformation determined by sending $\boldsymbol f \wedge \boldsymbol g$ to $C\boldsymbol f \wedge C\boldsymbol g$. Then the ideal of $R$ generated by $\bigwedge^2 C(A\boldsymbol x \wedge \boldsymbol x) = CA\boldsymbol x \wedge C\boldsymbol x$ is equal to $I_A$. 

Now assume that $B \in \K^{r \times r}$ is similar to $A$, i.e., $B = C^{-1}AC$ for some $C \in GL(r, \K)$.  Then, as we saw above, the ideal generated by $I_B$ and the ideal generated by $\bigwedge^2 C(B\boldsymbol x \wedge \boldsymbol x) = CB\boldsymbol x \wedge C\boldsymbol x = AC \boldsymbol x \wedge C\boldsymbol x$ are the same. Hence the schemes of $I_A$ and $I_B$ differ only by an automorphism of $\P^{r-1}$.
\end{proof}
\section{Ideals of Jordan matrices with a single eigenvalue}
\label{sec:jordan-single}
Let $\lambda \in \K$ and $r \in \N$. Denote by $J_{\lambda,r}$ the Jordan block of size $r$ with eigenvalue~$\lambda$. For $k \in \N$, we write
\[
kJ_{\lambda,r} = \underbrace{J_{\lambda,r} \oplus \cdots \oplus J_{\lambda,r}}_k. 
\]
Let  $\ell \in \N$. For each $i \in \{1, \dots, \ell\}$, fix $k_i \in \N$ and $r_i \in \N$ with $r_1 > \cdots > r_\ell$.  A~Jordan matrix with a single eigenvalue can, up to a permutation of its blocks, be uniquely written as 
\[
J_\lambda = \bigoplus_{i=1}^\ell k_i J_{\lambda,r_i}, 
\]
We write $I_\lambda$ for $I_{J_\lambda}$.  In this section we describe
first a reduced Gr\"obner basis of $I_\lambda$ with respect to the
graded revlex order and then a primary decomposition of $I_\lambda$.
%
We also give a detailed description of each component ideal in the
primary decomposition of $I_\lambda$, including their Hilbert
polynomials. This description reveals the geometry of the eigenscheme
of~$J_\lambda$.
\subsection{Reduced Gr\"obner basis for $I_\lambda$}
Let 
\[
\Lambda = \bigcup_{i_1=1}^\ell \left\{\left. (i_1, i_2, i_3 ) \in \N^3 \ \right| \ 1 \leq i_2 \leq k_{i_1},  1 \leq i_3 \leq r_{i_1}\right\}.
\]
We totally order $\Lambda$ via $(i_1,i_2,i_3) > (j_1,j_2,j_3)$ if and
only if either $i_1 < j_1$, or $i_1=j_1$ and $i_2<j_2$, or $i_1=j_1$,
$i_2=j_2$ and $i_3<j_3$.  Let
$R = \K\left[x(\boldsymbol i) \ \left| \ \boldsymbol i \in
    \Lambda\right. \right]$
with $x(\boldsymbol i) > x(\boldsymbol j)$ if and only if
$\boldsymbol i > \boldsymbol j$. We denote by $\boldsymbol x$ the
vector of variables $x(\boldsymbol i)$, $\boldsymbol i \in \Lambda$.
%
  The indices of variables of $R$ correspond to the decomposition of a
  matrix into Jordan blocks as follows.  The first index $i_{1}$
  enumerates the different sizes of Jordan blocks that appear.  The
  second index $i_{2}$ enumerates the copies of Jordan blocks of
  size~$i_{1}$.  Finally, $i_{3}$ enumerates the rows in the Jordan
  block determined by $(i_{1},i_{2})$.
%
  Let
  $\Delta = \{(i_1,i_2) \in \N^2 \ | \ 1 \leq i_1
    \leq \ell, 1 \leq i_2 \leq k_{i_1}\}$
  be ordered as $\Lambda$ above. 
  Setting the degree of $x(i_1,i_2,i_3)$ to be the vector in
  $\N^{|\Delta|}$ whose only non-zero entry is $1$ in the
  $(i_1,i_2)$-th position, $R$ becomes a multi-graded ring.
  In this ring, there is a single multi-degree shared by the variables
  in one Jordan block, but these degrees are independent across
  blocks.

\begin{notation}
\label{not:f}
 If $\boldsymbol i = (i_1,i_2,i_3)  \in \Lambda$ and  $(i_1,i_2,i_3+1) \in \Lambda$, i.e., $i_3 \leq r_{i_1}-1$, then we write $\boldsymbol i^+$ for $(i_1,i_2,i_3+1)$.  Likewise, if $\boldsymbol i = (i_1,i_2,i_3)  \in \Lambda$ and if $(i_1,i_2,i_3-1) \in \Lambda$, i.e., $i_3 \geq 2$, then we write $\boldsymbol i^-$ for $(i_1,i_2,i_3-1)$. This means that $\boldsymbol i^{++}$ and $\boldsymbol i^{--}$ stand for $(i_1,i_2,i_3+2)$ and $(i_1,i_2,i_3-2)$ respectively. 
\end{notation}

Let $\varGamma = \{(\boldsymbol i, \boldsymbol j) \in \Lambda \times \Lambda \ | \ \boldsymbol i > \boldsymbol j\}$.  Consider the following subsets of $\varGamma$: 
\begin{eqnarray*}
 \varGamma_1 & = & \{((i_1,i_2,i_3),(j_1,j_2,j_3)) \in \varGamma  \ | \ 
1 \leq i_3 < r_{i_1}, 1\leq  j_3 < r_{j_1}, i_3 + j_3 \geq r_{j_1}+1\}, \\
 \varGamma_2 & = & \{((i_1,i_2,i_3),(j_1,j_2,j_3)) \in \varGamma  \ | \ 
 1 \leq i_3 < r_{i_1}, 1\leq  j_3 < r_{j_1}, i_3 + j_3 \leq r_{j_1}\}, \\
 \varGamma_3 & = & \{((i_1,i_2,i_3),(j_1,j_2,r_{j_1})) \in \varGamma \ | \ 1 \leq i_3 < r_{i_1}\},  \\
 \varGamma_4 & = & \{((i_1,i_2,r_{i_1}),(j_1,j_2,j_3)) \in \varGamma \ | \ 1 \leq j_3 < r_{j_1}\}. 
\end{eqnarray*}

Let $\boldsymbol i = (i_1,i_2,i_3) \in \Lambda$. Then 
\[
(J_\lambda \boldsymbol x)[\boldsymbol i, 1] = 
\left\{
\begin{array}{ll}
\lambda x(\boldsymbol i) + x(\boldsymbol i^+) & \mbox{if $i_3 < r_{i_1}$} \\
\lambda x (\boldsymbol i) & \mbox{if $i_3 = r_{i_1}$.}
\end{array}
\right. 
\]
Therefore, if $(\boldsymbol i, \boldsymbol j) \in \varGamma$, then the $(\boldsymbol i, \boldsymbol j)$-minor of $(J_\lambda \boldsymbol x \ | \  \boldsymbol x)$,  denoted $(J_\lambda \boldsymbol x \wedge \boldsymbol x)_{\boldsymbol i, \boldsymbol j}$, is: 
\[
(J_\lambda \boldsymbol x \wedge \boldsymbol x)_{\boldsymbol i, \boldsymbol j}
 = 
 \left|
 \begin{array}{cc}
 J_\lambda \boldsymbol x[\boldsymbol i, 1] & x(\boldsymbol i) \\
 J_\lambda \boldsymbol x[\boldsymbol j, 1] & x(\boldsymbol j)
 \end{array}
 \right|
 = 
 \left\{
 \begin{array}{ll}
 x(\boldsymbol i^+)x(\boldsymbol j) - x(\boldsymbol i)x(\boldsymbol j^+) & 
 \mbox{if $(\boldsymbol i,\boldsymbol j) \in \varGamma_1 \cup \varGamma_2$} \\
 x(\boldsymbol i^+)x(\boldsymbol j) & \mbox{if $(\boldsymbol i,\boldsymbol j) \in \varGamma_3$} \\
 -x(\boldsymbol i)x(\boldsymbol j^+) &\mbox{if $(\boldsymbol i,\boldsymbol j) \in \varGamma_4$} \\
 0 & \mbox{otherwise.}
 \end{array}
 \right. 
\]
\begin{proposition}
\label{prop:multi-graded} 
The ideal $I_\lambda$ of $J_\lambda$ is a multi-homogeneous ideal of $R$. 
\end{proposition}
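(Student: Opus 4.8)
The plan is to reduce everything to checking the generators. Recall that an ideal generated by multi-homogeneous elements is multi-homogeneous, so, since $I_\lambda = I_{J_\lambda}$ is generated by the $2 \times 2$ minors of $(J_\lambda\boldsymbol x \mid \boldsymbol x)$ — and every such minor is, up to sign and up to the trivial case $\boldsymbol i = \boldsymbol j$, one of the elements $(J_\lambda\boldsymbol x \wedge \boldsymbol x)_{\boldsymbol i, \boldsymbol j}$ with $(\boldsymbol i, \boldsymbol j) \in \varGamma$ — it suffices to show that each $(J_\lambda\boldsymbol x \wedge \boldsymbol x)_{\boldsymbol i, \boldsymbol j}$ is multi-homogeneous with respect to the $\N^{|\Delta|}$-grading on $R$.

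The single observation that makes this work is that, by the definition of the multi-grading, the multi-degree of a variable $x(i_1,i_2,i_3)$ depends only on the pair $(i_1,i_2)$; in particular, for $\boldsymbol i = (i_1,i_2,i_3) \in \Lambda$ with $i_3 < r_{i_1}$, the shifted variable $x(\boldsymbol i^+) = x(i_1,i_2,i_3+1)$ carries exactly the same multi-degree as $x(\boldsymbol i)$, since the shift changes only the third index.

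With this in hand I would simply run through the explicit description of $(J_\lambda\boldsymbol x \wedge \boldsymbol x)_{\boldsymbol i, \boldsymbol j}$ displayed just above the statement: when $(\boldsymbol i, \boldsymbol j) \in \varGamma_1 \cup \varGamma_2$ the minor equals the binomial $x(\boldsymbol i^+)x(\boldsymbol j) - x(\boldsymbol i)x(\boldsymbol j^+)$, and by the observation above both of its monomials carry the multi-degree $\deg x(\boldsymbol i) + \deg x(\boldsymbol j)$, so the binomial is multi-homogeneous; when $(\boldsymbol i, \boldsymbol j) \in \varGamma_3 \cup \varGamma_4$ the minor is (up to sign) a single monomial, hence trivially multi-homogeneous; and in the remaining cases it is $0$. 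Since these cases exhaust $\varGamma$, every generator is multi-homogeneous, and the proposition follows.

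I do not anticipate a genuine obstacle here: the argument is a direct verification against the minor formula already computed, and the only subtlety is the bookkeeping point that the multi-grading was defined precisely so that the intra-block shifts $i_3 \mapsto i_3 \pm 1$ preserve multi-degree — which is exactly the feature that turns the relations coming from a single Jordan block into homogeneous binomials.
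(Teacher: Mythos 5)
Your proof is correct and follows essentially the same route as the paper: reduce to the generators $(J_\lambda\boldsymbol x \wedge \boldsymbol x)_{\boldsymbol i, \boldsymbol j}$, note that the $\varGamma_3 \cup \varGamma_4$ cases are monomials, and check that both terms of the binomial $x(\boldsymbol i^+)x(\boldsymbol j)-x(\boldsymbol i)x(\boldsymbol j^+)$ share a multi-degree. Your explicit remark that the multi-degree of $x(i_1,i_2,i_3)$ depends only on $(i_1,i_2)$, so that the shift $\boldsymbol i \mapsto \boldsymbol i^+$ preserves multi-degree, is exactly the point the paper leaves implicit.
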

\begin{proof}
It is enough to show that $(J_\lambda\boldsymbol x \wedge \boldsymbol
x)_{\boldsymbol i, \boldsymbol j}$ is multi-homogeneous for every
$(\boldsymbol i, \boldsymbol j) \in \bigcup_{i=1}^4
\varGamma_i$. Since every element of $\varGamma_3 \cup \varGamma_4$ is
a monomial, it is clearly multi-homogeneous. Every binomial
corresponding to an index pair in $\varGamma_1 \cup \varGamma_2$ can be written as 
\begin{equation}
\label{eq:binomial}
x(\boldsymbol i^+)x(\boldsymbol j)-x(\boldsymbol i)x(\boldsymbol j^+). 
\end{equation}
Each monomial appearing in (\ref{eq:binomial}) has the same multi-degree, and hence (\ref{eq:binomial}) also is multi-homogeneous. 
\end{proof}

For  simplicity, if $(\boldsymbol i, \boldsymbol j)  \in \varGamma_1 \cup \varGamma_2$, then we let
\begin{equation*}
f(\boldsymbol i, \boldsymbol j) \, := \,  (J_\lambda \boldsymbol x \wedge \boldsymbol x)_{\boldsymbol i, \boldsymbol j}
\, = \,  x(\boldsymbol i^+)x(\boldsymbol j) - x(\boldsymbol i)x(\boldsymbol j^+). 
\end{equation*}
Consider the following subsets of $R$:  
\begin{eqnarray*}
H_k &  = & \{f(\boldsymbol i, \boldsymbol j) \ | \ (\boldsymbol i, \boldsymbol j) \in \varGamma_k\}, \ \mbox{for $k \in \{1,2\}$}, \\  
H_3 & = & \{x(\boldsymbol i^+)x(\boldsymbol j)  \ |  \ (\boldsymbol i, \boldsymbol j) \in \varGamma_3\}, \\
H_4 & = & \{x(\boldsymbol i)x(\boldsymbol j^+) \ |  \ (\boldsymbol i, \boldsymbol j) \in \varGamma_4\}.  
\end{eqnarray*} 
Let $H = \bigcup_{k=1}^4 H_k$. Then it is immediate to see 
\[
I_\lambda = 
\left\{ 
\begin{array}{ll}
\langle 0 \rangle & \mbox{if  $r_1 = 1$} \\
\langle H \rangle & \mbox{otherwise.}
\end{array}
\right. 
\]
\begin{theorem}[Zaare-Nahandi and Zaare-Nahandi~\cite{zaare-nahandi}]
\label{thm:zaare-nahandi}
The set $H$ is a Gr\"obner basis for $I_\lambda$ with respect to the graded reverse lexicographic order~$>_{\mathrm {grevlex}}$. 
\end{theorem}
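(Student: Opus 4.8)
The plan is to verify Buchberger's criterion: show that every $S$-polynomial among elements of $H$ reduces to zero modulo $H$ with respect to $>_{\mathrm{grevlex}}$. Since the ambient ring and orders here match exactly the setup of~\cite{zaare-nahandi}, the cleanest route is to reduce to their theorem; but to make the argument self-contained I would organize it around the structure of $H$ as a set of binomials and monomials indexed by $\varGamma_1 \cup \varGamma_2 \cup \varGamma_3 \cup \varGamma_4$. First I would record the leading terms: for $f(\boldsymbol i,\boldsymbol j)=x(\boldsymbol i^+)x(\boldsymbol j)-x(\boldsymbol i)x(\boldsymbol j^+)$ with $\boldsymbol i > \boldsymbol j$, one checks that under $>_{\mathrm{grevlex}}$ the leading monomial is $x(\boldsymbol i)x(\boldsymbol j^+)$ precisely when $\boldsymbol j^+ > \boldsymbol i^+$, which (since $\boldsymbol i > \boldsymbol j$ forces comparison only in the third index when $i_1=j_1,i_2=j_2$, and is automatic across different blocks because decrementing the third index moves a variable up in the order) happens exactly when $(\boldsymbol i,\boldsymbol j)\in\varGamma_1$; for $(\boldsymbol i,\boldsymbol j)\in\varGamma_2$ the leading monomial is instead $x(\boldsymbol i^+)x(\boldsymbol j)$. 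For $H_3$ and $H_4$ the generators are already monomials. Getting these leading-term identifications right, and in particular the dichotomy between $\varGamma_1$ and $\varGamma_2$, is the bookkeeping heart of the argument.

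Next I would run through the $S$-polynomial pairs according to the types of the two generators involved. For a binomial paired with a monomial from $H_3$ or $H_4$, the $S$-polynomial is again a monomial multiple of one of the remaining terms, and one shows it lies in the monomial ideal generated by $H_3\cup H_4$: the key point is that whenever an index pair degenerates (some third index reaching $r_{i_1}$), the corresponding "boundary" monomial $x(\boldsymbol i^+)x(\boldsymbol j)$ or $x(\boldsymbol i)x(\boldsymbol j^+)$ is itself in $H_3$ or $H_4$, so reduction terminates. For two binomials $f(\boldsymbol i,\boldsymbol j)$ and $f(\boldsymbol k,\boldsymbol l)$ whose leading terms share a variable, the $S$-polynomial is a difference of two degree-$3$ monomials; I would exhibit an explicit reduction path, the standard "telescoping" of the $2\times 2$-minor relations: the trailing monomials differ by a chain of the binomial relations $x(\boldsymbol a^+)x(\boldsymbol b) - x(\boldsymbol a)x(\boldsymbol b^+)$, each step of which is legal because the intermediate index pairs again land in $\varGamma_1\cup\varGamma_2\cup\varGamma_3\cup\varGamma_4$. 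Because the multi-grading from Proposition~\ref{prop:multi-graded} is respected at every step, only finitely many monomials of a given multi-degree can appear, which guarantees the chain terminates.

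The main obstacle I anticipate is the case analysis for pairs of binomials straddling the $\varGamma_1$/$\varGamma_2$ boundary — that is, when $i_3+j_3$ is near the threshold $r_{j_1}$ and a single reduction step pushes an index pair from one $\varGamma$-type to another, potentially into the monomial part $\varGamma_3\cup\varGamma_4$. Handling this requires carefully tracking which of the two monomials in each intermediate binomial is the leading one and confirming that the reduction still decreases the $>_{\mathrm{grevlex}}$-leading term. A clean way to sidestep the worst of this is to note that $I_\lambda$ is, up to the obvious relabeling, the ideal of $2\times 2$ minors studied in~\cite{catalano-johnson} and~\cite{zaare-nahandi}; since we invoke Theorem~\ref{thm:zaare-nahandi} by direct citation, it suffices to confirm that our $H$, our variable order, and our monomial order coincide with theirs, which is a matter of unwinding notation rather than a new computation. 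I would therefore present the reduction sketch for the instructive cases and defer the full Buchberger check to the cited references.
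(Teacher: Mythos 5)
Your ultimate strategy---confirm that $H$, the variable order, and the monomial order match those of~\cite{zaare-nahandi} and then invoke that paper---is in fact exactly what the authors do: Theorem~\ref{thm:zaare-nahandi} is stated with attribution and no proof is supplied in the paper. Deferring the Buchberger verification to the citation is therefore legitimate, and to that extent your plan is consistent with the source.

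However, the self-contained portion of your sketch rests on an incorrect leading-term analysis, and this is not cosmetic: it contradicts Corollary~\ref{cor:initialideal} and would derail any actual Buchberger computation. You claim that for $(\boldsymbol i,\boldsymbol j)\in\varGamma_1$ the leading monomial of $f(\boldsymbol i,\boldsymbol j)$ is $x(\boldsymbol i)x(\boldsymbol j^+)$, switching to $x(\boldsymbol i^+)x(\boldsymbol j)$ on $\varGamma_2$, with the switch governed by whether $\boldsymbol j^+>\boldsymbol i^+$. Neither assertion holds. First, $\boldsymbol i>\boldsymbol j$ forces $\boldsymbol i^+>\boldsymbol j^+$ in every case: across blocks the comparison is decided by $(i_1,i_2)$ versus $(j_1,j_2)$ and is unaffected by incrementing the third index, and within a block $i_3<j_3$ gives $i_3+1<j_3+1$. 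Second, a direct grevlex computation shows $x(\boldsymbol i^+)x(\boldsymbol j)>x(\boldsymbol i)x(\boldsymbol j^+)$ always: the variables satisfy $x(\boldsymbol i)>x(\boldsymbol i^+)\geq x(\boldsymbol j)>x(\boldsymbol j^+)$, so the smallest variable $x(\boldsymbol j^+)$ occurs only in the second monomial, and grevlex prefers the monomial in which the smallest variable appears to the lower power. Hence the leading term is $x(\boldsymbol i^+)x(\boldsymbol j)$ uniformly on $\varGamma_1\cup\varGamma_2$, exactly as Corollary~\ref{cor:initialideal} records. The $\varGamma_1$/$\varGamma_2$ dichotomy has nothing to do with which monomial leads; it records whether $i_3+j_3$ exceeds $r_{j_1}$, i.e., whether the telescoping identity of Lemma~\ref{lem:monomial} terminates at a boundary monomial of $H_3$ (so that $x(\boldsymbol i^+)x(\boldsymbol j)$ itself lies in $I_\lambda$) or stays inside $\varGamma_2$ (producing the binomial $g(\boldsymbol i,\boldsymbol j)$ of Notation~\ref{not:g}). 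With the leading terms misidentified you would form the wrong $S$-polynomials, and the ``straddling the boundary'' difficulty you anticipate is a misdiagnosis of where the real case analysis lies; as written the sketch does not constitute an outline of a correct verification.
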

The following is an immediate consequence of Theorem~\ref{thm:zaare-nahandi}. 
\begin{corollary}
\label{cor:initialideal} 
The initial ideal $\mathrm{in}_{>_\mathrm{grevlex}} (I_\lambda)$ of $I_\lambda$ with respect to $>_\mathrm{grevlex}$ is 
\[
\left\langle x(\boldsymbol i^+)x(\boldsymbol j) \ | \ (\boldsymbol i, \boldsymbol j) \in \varGamma_1 \cup \varGamma_2 \cup \varGamma_3\right\rangle. 
\]
\end{corollary}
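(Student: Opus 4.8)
The plan is to feed Theorem~\ref{thm:zaare-nahandi} into the basic fact that the leading terms of a Gr\"obner basis generate the initial ideal: since $H = \bigcup_{k=1}^{4} H_k$ is a Gr\"obner basis of $I_\lambda$ for $>_\mathrm{grevlex}$, we have $\mathrm{in}_{>_\mathrm{grevlex}}(I_\lambda) = \langle \mathrm{in}_{>_\mathrm{grevlex}}(h) : h \in H\rangle$. The elements of $H_3$ and of $H_4$ are already monomials, namely $x(\boldsymbol i^+)x(\boldsymbol j)$ for $(\boldsymbol i, \boldsymbol j) \in \varGamma_3$ and $x(\boldsymbol i)x(\boldsymbol j^+)$ for $(\boldsymbol i, \boldsymbol j) \in \varGamma_4$, so the only computation is the leading term of the binomial $f(\boldsymbol i, \boldsymbol j) = x(\boldsymbol i^+)x(\boldsymbol j) - x(\boldsymbol i)x(\boldsymbol j^+)$ for $(\boldsymbol i, \boldsymbol j) \in \varGamma_1 \cup \varGamma_2$, and I claim this leading term is $x(\boldsymbol i^+)x(\boldsymbol j)$. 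The input is that raising the third coordinate strictly lowers an element of $\Lambda$, so $\boldsymbol j^+ < \boldsymbol j < \boldsymbol i$, while $\boldsymbol j^+ < \boldsymbol i^+$ is checked by splitting into the cases $i_1 < j_1$; $i_1 = j_1$, $i_2 < j_2$; and $i_1 = j_1$, $i_2 = j_2$, $i_3 < j_3$, in each of which the comparison is settled by coordinates that the operations $\boldsymbol j \mapsto \boldsymbol j^+$ and $\boldsymbol i \mapsto \boldsymbol i^+$ leave untouched. Thus $\boldsymbol j^+$ is the strictly smallest of the indices occurring in $f(\boldsymbol i, \boldsymbol j)$ and it occurs, with exponent $1$, only in $x(\boldsymbol i)x(\boldsymbol j^+)$; since the two monomials have equal degree, the grevlex tie-break (among monomials of the same degree, the larger is the one in which the smallest variable with distinct exponents occurs to a smaller power) yields $x(\boldsymbol i^+)x(\boldsymbol j) >_\mathrm{grevlex} x(\boldsymbol i)x(\boldsymbol j^+)$. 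Consequently
\[
\mathrm{in}_{>_\mathrm{grevlex}}(I_\lambda) = \big\langle x(\boldsymbol i^+)x(\boldsymbol j) : (\boldsymbol i, \boldsymbol j) \in \varGamma_1 \cup \varGamma_2 \cup \varGamma_3\big\rangle + \big\langle x(\boldsymbol i)x(\boldsymbol j^+) : (\boldsymbol i, \boldsymbol j) \in \varGamma_4\big\rangle .
\]

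It then remains to absorb the $\varGamma_4$ summand into the first one. Let $(\boldsymbol i, \boldsymbol j) \in \varGamma_4$, so $\boldsymbol i = (i_1,i_2,r_{i_1})$ and $\boldsymbol j = (j_1,j_2,j_3)$ with $j_3 < r_{j_1}$. Since $\boldsymbol i$ is the last row of its block, $\boldsymbol i \le \boldsymbol j$ would hold if the two lay in the same block; as $\boldsymbol i > \boldsymbol j$ they lie in different blocks, the comparison $\boldsymbol i > \boldsymbol j$ is decided by $(i_1,i_2)$ versus $(j_1,j_2)$ alone, $r_{i_1} \ge r_{j_1} \ge 2$ (using $j_3 < r_{j_1}$), hence $\boldsymbol i^- = (i_1,i_2,r_{i_1}-1)$ exists and $\boldsymbol i^- > \boldsymbol j^+$. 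Now $(\boldsymbol i^-, \boldsymbol j^+) \in \varGamma$, the third coordinate of $\boldsymbol i^-$ is below $r_{i_1}$, and the third coordinate $j_3+1$ of $\boldsymbol j^+$ is at most $r_{j_1}$; if $j_3 + 1 < r_{j_1}$ then $(\boldsymbol i^-, \boldsymbol j^+) \in \varGamma_1 \cup \varGamma_2$, and if $j_3 + 1 = r_{j_1}$ then $(\boldsymbol i^-, \boldsymbol j^+) \in \varGamma_3$. In either case the associated generator on the right-hand side above is $x((\boldsymbol i^-)^+)x(\boldsymbol j^+) = x(\boldsymbol i)x(\boldsymbol j^+)$, so the $\varGamma_4$ generator is already among the $\varGamma_1 \cup \varGamma_2 \cup \varGamma_3$ generators. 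This yields the claimed equality.

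The only delicate step is the leading-term identification for $f(\boldsymbol i,\boldsymbol j)$, and even that reduces to checking that $\boldsymbol j^+$ is the smallest index occurring; the three-case split makes this routine, and the coincidence $\boldsymbol i^+ = \boldsymbol j$ (which can occur) is harmless since it does not involve $\boldsymbol j^+$. I expect no genuine obstacle beyond careful bookkeeping with the index sets $\varGamma_k$.
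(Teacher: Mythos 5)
Your proposal is correct and follows essentially the same route as the paper: read off the leading terms of the Gr\"obner basis $H$ from Theorem~\ref{thm:zaare-nahandi}, and then absorb the $\varGamma_4$ monomials by observing that $(\boldsymbol i^-,\boldsymbol j^+)$ lands in $\varGamma_1\cup\varGamma_2\cup\varGamma_3$ and contributes the same monomial $x(\boldsymbol i)x(\boldsymbol j^+)$. You are merely more explicit than the paper about two bookkeeping points it leaves implicit --- that the grevlex leading term of $f(\boldsymbol i,\boldsymbol j)$ is indeed $x(\boldsymbol i^+)x(\boldsymbol j)$, and that $\boldsymbol i^->\boldsymbol j^+$ so the pair lies in $\varGamma$ --- both of which you verify correctly.
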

\begin{proof}
By Theorem~\ref{thm:zaare-nahandi}, $\mathrm{in}_{>_\mathrm{grevlex}} (I_\lambda)$ is generated by 
\[
\{x(\boldsymbol i^+)x(\boldsymbol j) \ | \ (\boldsymbol i, \boldsymbol j) \in \varGamma_1 \cup \varGamma_2 \cup \varGamma_3\}
\ \ 
\mbox{and} 
\ 
\left\{x(\boldsymbol i) x(\boldsymbol j^+) \ |\ (\boldsymbol i, \boldsymbol j) \in \varGamma_4\right\}. 
\]
Therefore, it is enough to show that if $(\boldsymbol i, \boldsymbol j) = ((i_1,i_2,r_{i_1}),(j_1,j_2,j_3))\in \varGamma_4$, then $x(\boldsymbol i)x(\boldsymbol j^+)$ is also the leading term of one of the polynomials in  $\bigcup_{i=1}^3 H_i$.  Since $r_{i_1}-1+j_3+1 >r_{i_1}$, we find $(\boldsymbol i^-,\boldsymbol j^+) \in \varGamma_1 \cup \varGamma_3$. This implies that  there exists a polynomial in $H_1 \cup H_3$ whose leading term coincides with $x(\boldsymbol i)x(\boldsymbol j^+)$, and hence we completed the proof. 
\end{proof}
\begin{remark}
\label{rem:minimal}
Corollary~\ref{cor:initialideal} implies that $\bigcup_{i=1}^3 H_i$ is a minimal Gr\"obner basis for $I_\lambda$ with respect to~$>_\mathrm{grevlex}$, but it is not a reduced Gr\"obner basis.  Indeed, if $(\boldsymbol i, \boldsymbol j) = ((i_1,i_2,i_3),(j_1,j_2,r_{j_1}-1)) \in \varGamma_1$ and if $i_3 >1$, then  
\[
x(\boldsymbol i^+) x(\boldsymbol j) - x(\boldsymbol i)x(\boldsymbol j^+) \in H_1. 
\]
Note that $(\boldsymbol i^-, \boldsymbol j^+) \in \varGamma_3$. Thus $x(\boldsymbol i)x(\boldsymbol j^+) \in H_3$. 

Also suppose that $i_3 \geq 2$ and  $(\boldsymbol i, \boldsymbol j) = ((i_1,i_2,i_3),(j_1,j_2,j_3)) \in \varGamma_2$. Then $(\boldsymbol i^-,\boldsymbol j^+) \in \varGamma_3$ if $j_3+1 = r_{j_1}$; $(\boldsymbol i^-,\boldsymbol j^+) \in \varGamma_2$ otherwise. 
This means that  the second term of 
\[
 x(\boldsymbol i^+)x(\boldsymbol j)-x(\boldsymbol i)x(\boldsymbol j^+) \in H_2 
\]
is divisible by $ x(\boldsymbol i)x(\boldsymbol j^+) \in H_3$ if $j_3+1 = r_{j_1}$; it is divisible by the leading term of 
\[
 x(\boldsymbol i)x(\boldsymbol j^+)-x(\boldsymbol i^-)x(\boldsymbol j^{++}) \in H_2
\]
otherwise. $\bigcup_{i=1}^3 H_i$ is therefore not reduced. In the rest of this subsection, we construct a reduced Gr\"obner basis for $I_\lambda$ with respect to $>_\mathrm{grevlex}$ from the minimal Gr\"obner basis $H$. 
\end{remark}
\begin{notation}
\label{not:g}
Let $(\boldsymbol i, \boldsymbol j) = ((i_1,i_2,i_3),(j_1,j_2,j_3)) \in \varGamma_2$. For simplicity, we denote $(i_1,i_2,1)$ by $\boldsymbol i^*$ and $(j_1,j_2,i_3+j_3)$ by $\boldsymbol j^*$. Then we let $g(\boldsymbol i, \boldsymbol j)$ be the binomial $x(\boldsymbol i^+)x(\boldsymbol j) - x(\boldsymbol i^*)x(\boldsymbol j^*)$. 
\end{notation}

Consider the following subsets of $R$: 
\[
 G_1  =  \left\{x(\boldsymbol i^+)x(\boldsymbol j) \ | \ \left(\boldsymbol i, \boldsymbol j\right) \in \varGamma_1 \cup \varGamma_3\right\} \ \  \mbox{and} \ \ G_2  =\left\{g(\boldsymbol i, \boldsymbol j)  \ | \  \left(\boldsymbol i, \boldsymbol j\right) \in \varGamma_2\right\}. 
\]
Define $G$ to be $\{0\}$ if $r_1 = 1$ and to be $G_1 \cup G_2$ otherwise. We prove that $G$ is a reduced Gr\"obner basis for $I_\lambda$ with respect to $>_\mathrm{grevlex}$.  The following two lemmas can be regarded as a reduction process that transforms $H$ to $G$. 
\begin{lemma}
\label{lem:monomial}
Let $(\boldsymbol i , \boldsymbol j)= ((i_1,i_2,i_3), (j_1,j_2,j_3)) \in \varGamma_1$. Then $x(\boldsymbol i^+)x(\boldsymbol j) \in I_\lambda$. 
\end{lemma}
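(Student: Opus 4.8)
The statement to prove is Lemma~\ref{lem:monomial}: for $(\boldsymbol i, \boldsymbol j) = ((i_1,i_2,i_3),(j_1,j_2,j_3)) \in \varGamma_1$ we have $x(\boldsymbol i^+)x(\boldsymbol j) \in I_\lambda$. Note that the membership we want is genuinely stronger than what we get for free: the generator $f(\boldsymbol i, \boldsymbol j) = x(\boldsymbol i^+)x(\boldsymbol j) - x(\boldsymbol i)x(\boldsymbol j^+)$ of $H_1$ lies in $I_\lambda$, but we must show the first \emph{monomial} alone lies in $I_\lambda$. So the plan is to produce enough binomial relations from $H$ to kill the tail term $x(\boldsymbol i)x(\boldsymbol j^+)$ modulo $I_\lambda$, i.e.\ to show $x(\boldsymbol i)x(\boldsymbol j^+) \in I_\lambda$ as well, from which the claim follows by adding $f(\boldsymbol i,\boldsymbol j)$.

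First I would set up an induction on the third coordinate $j_3$ of $\boldsymbol j$ (equivalently, on the ``defect'' $r_{j_1} - j_3 \geq 1$, which is at least $1$ since $(\boldsymbol i, \boldsymbol j)\in\varGamma_1$ forces $j_3 < r_{j_1}$). The base case is $j_3 = r_{j_1}-1$, i.e.\ $\boldsymbol j^+ = (j_1,j_2,r_{j_1})$ is the last row of its block. Then I claim $(\boldsymbol i^-, \boldsymbol j^+)$ — or, if $i_3 = 1$ so that $\boldsymbol i^-$ does not exist, a direct inspection — lands in $\varGamma_3$: indeed $\boldsymbol j^+$ has third coordinate $r_{j_1}$ and $\boldsymbol i$ has third coordinate $i_3 < r_{i_1}$ (this is part of the definition of $\varGamma_1$), so the pair $(\boldsymbol i, \boldsymbol j^+)$ with the roles of ``$\boldsymbol i$'' and ``$\boldsymbol j$'' appropriately arranged is exactly of the shape defining $\varGamma_3$, giving $x(\boldsymbol i)x(\boldsymbol j^+) \in H_3 \subseteq I_\lambda$; hence $x(\boldsymbol i^+)x(\boldsymbol j) = f(\boldsymbol i,\boldsymbol j) + x(\boldsymbol i)x(\boldsymbol j^+) \in I_\lambda$. (I need to be careful about the ordering convention on $\Lambda\times\Lambda$ defining $\varGamma$; since both coordinates here are in the same block or comparable blocks, I will just verify $\boldsymbol i > \boldsymbol j^+$ or handle the opposite order by symmetry of the $2\times 2$ minor, which only changes a sign.) For the inductive step, when $j_3 + 1 < r_{j_1}$, I use the binomial $f(\boldsymbol i^-, \boldsymbol j^+) = x(\boldsymbol i)x(\boldsymbol j^+) - x(\boldsymbol i^-)x(\boldsymbol j^{++})$ when $i_3 \geq 2$: one checks $(\boldsymbol i^-, \boldsymbol j^+)$ still satisfies the $\varGamma_1$ condition ($i_3 - 1 + j_3 + 1 = i_3 + j_3 \geq r_{j_1}+1$, and $i_3 - 1 < r_{i_1}$, $j_3+1 < r_{j_1}$), so by the inductive hypothesis applied to $(\boldsymbol i^-,\boldsymbol j^+)$ — whose $\boldsymbol j$-coordinate $j_3+1$ is larger — we get $x((\boldsymbol i^-)^+)x(\boldsymbol j^+) = x(\boldsymbol i)x(\boldsymbol j^+) \in I_\lambda$, and again adding $f(\boldsymbol i,\boldsymbol j)$ finishes. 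The remaining subcase $i_3 = 1$ needs separate attention, and I expect this is where one pushes the index down in $\boldsymbol i$ is impossible, so instead I would move $j_3$ up toward $r_{j_1}$ directly: repeatedly rewriting via the $\varGamma_2$/$\varGamma_1$ binomials until the second factor reaches the block's last row, i.e.\ essentially the argument already sketched in Remark~\ref{rem:minimal}, where it is observed that $(\boldsymbol i^-,\boldsymbol j^+)\in\varGamma_3$ when $j_3+1=r_{j_1}$ and $\in\varGamma_1\cup\varGamma_2$ otherwise.

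The cleanest way to organize all of this is probably \emph{downward induction on $j_3$} (from $r_{j_1}-1$ down to $1$), at each step reducing $x(\boldsymbol i)x(\boldsymbol j^+)$ to a monomial with strictly larger second index $j_3+1$ using the $\varGamma_1$ or $\varGamma_2$ relation (which exists because $i_3 + (j_3+1) > r_{j_1}$ keeps us in $\varGamma_1$, or the analogous bookkeeping puts us in $\varGamma_2$ with a monomial leading term that is again of $\varGamma_1$-type after shifting), terminating in the base case where the relation is the honest monomial of $H_3$. Then $x(\boldsymbol i^+)x(\boldsymbol j) = f(\boldsymbol i,\boldsymbol j) + x(\boldsymbol i)x(\boldsymbol j^+) \in I_\lambda$.

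The main obstacle, I expect, is the bookkeeping with the two-layer index set $\Lambda$ and the order on $\varGamma$: one has to repeatedly verify that the shifted pairs $(\boldsymbol i^\pm, \boldsymbol j^\pm)$ still satisfy the inequalities $1 \le i_3 < r_{i_1}$, $1 \le j_3 < r_{j_1}$, $i_3 + j_3 \ge r_{j_1}+1$ that define $\varGamma_1$ (or fall into $\varGamma_2$ or $\varGamma_3$ when an index hits a boundary), and that the ordering $\boldsymbol i > \boldsymbol j$ (or its reversal, absorbed into a sign) is respected so the relevant binomial or monomial genuinely belongs to $H$. None of it is deep — it is a finite descent — but getting the boundary cases ($i_3 = 1$, $j_3 + 1 = r_{j_1}$, and the possibility $i_1 = j_1$ vs.\ $i_1 \ne j_1$) exactly right is the part that requires care. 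Everything else is a one-line application of $f(\boldsymbol i,\boldsymbol j)\in I_\lambda$.
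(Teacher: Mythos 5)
Your argument is correct and is essentially the paper's proof: the paper writes your downward induction on $j_3$ as a single explicit telescoping sum
$x(\boldsymbol i^+)x(\boldsymbol j) = \sum_{p=0}^{r_{j_1}-j_3-1} f((i_1,i_2,i_3-p),(j_1,j_2,j_3+p)) + x(i_1,i_2,i_3+j_3-r_{j_1}+1)\,x(j_1,j_2,r_{j_1})$,
whose final term lies in $H_3$. The only cleanup needed in your write-up is that the subcase $i_3=1$ you set aside is vacuous: $(\boldsymbol i,\boldsymbol j)\in\varGamma_1$ forces $i_3\geq r_{j_1}+1-j_3\geq 2$, and this bound persists along the descent, so the $\varGamma_2$ detour you sketch is never needed.
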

\begin{proof}
As $j_3 < r_{j_1}$ and $i_3+j_3 \geq r_{j_1}+1$, we have $1\leq i_3 + j_3 - r_{j_1}   <  i_3  <  r_{i_1}$.
Thus $((i_1,i_2,i_3+j_3-r_{j_1}), (j_1,j_2,r_{j_1})) \in \varGamma_3$, and hence $x(i_1,i_2,i_3+j_3-r_{j_1}+1)x(j_1,j_2,r_{j_1}) \in H_3 \subseteq I_\lambda$. Since 
\[
x(i_1,i_2,i_3+1)x(\boldsymbol j) = 
\sum_{p=0}^{r_{j_1}-j_3-1}  f((i_1,i_2,i_3-p), (j_1,j_2,j_3+ p))+ x(i_1,i_2,i_3+j_3-r_{j_1}+1)x(j_1,j_2,r_{j_1})
\]
and since $ f((i_1,i_2,i_3-p), (j_1,j_2,j_3+ p)) \in I_\lambda$ for each $p \in \{0, \dots, r_{j_1}-j_3-1\}$, the monomial $x(i_1,i_2,i_3+1) x(j_1,j_2,j_3)$ is an element of  $I_\lambda$. 
\end{proof}
\begin{lemma}
\label{lem:generatingset}
The ideal $I_\lambda$ of $J_\lambda$ is generated by $G$. 
\end{lemma}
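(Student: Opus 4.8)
The plan is to prove the equality $\langle G\rangle=I_\lambda$ by establishing the two inclusions separately. When $r_1=1$ both ideals are $\langle 0\rangle$, so I assume throughout that $r_1\ge 2$; then $I_\lambda=\langle H\rangle$ with $H=\bigcup_{k=1}^{4}H_k$, and $G=G_1\cup G_2$. Since $G_1$ consists of the monomials $x(\boldsymbol i^+)x(\boldsymbol j)$ with $(\boldsymbol i,\boldsymbol j)\in\varGamma_1\cup\varGamma_3$ and $G_2$ of the binomials $g(\boldsymbol i,\boldsymbol j)$ with $(\boldsymbol i,\boldsymbol j)\in\varGamma_2$, the inclusion $\langle G\rangle\subseteq I_\lambda$ reduces to checking that each of these generators lies in $I_\lambda$, while $I_\lambda\subseteq\langle G\rangle$ reduces to checking $H_1\cup H_2\cup H_3\cup H_4\subseteq\langle G\rangle$.

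For $\langle G\rangle\subseteq I_\lambda$: a generator of $G_1$ indexed by $\varGamma_3$ is by definition an element of $H_3\subseteq I_\lambda$, and one indexed by $\varGamma_1$ lies in $I_\lambda$ by Lemma~\ref{lem:monomial}. For a binomial $g(\boldsymbol i,\boldsymbol j)\in G_2$ with $(\boldsymbol i,\boldsymbol j)=((i_1,i_2,i_3),(j_1,j_2,j_3))\in\varGamma_2$, I would verify the telescoping identity
\[
g(\boldsymbol i,\boldsymbol j)=\sum_{p=0}^{i_3-1}f\bigl((i_1,i_2,i_3-p),\,(j_1,j_2,j_3+p)\bigr),
\]
and observe that every index pair on the right is again in $\varGamma_2$ (the sum of its last two coordinates is still $i_3+j_3\le r_{j_1}$, both coordinates stay in the admissible ranges, and $\boldsymbol i>\boldsymbol j$ forces the inequality needed on the third coordinates), so each summand lies in $H_2\subseteq I_\lambda$.

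For $I_\lambda\subseteq\langle G\rangle$ I would handle $H_3,H_4,H_1,H_2$ in turn. The inclusion $H_3\subseteq G_1$ is immediate. A generator of $H_4$ is a monomial $x(\boldsymbol i)x(\boldsymbol j^+)$ with $\boldsymbol i=(i_1,i_2,r_{i_1})$ and $(\boldsymbol i,\boldsymbol j)\in\varGamma_4$; here $r_{i_1}\ge 2$ (no pair in $\varGamma_4$ has $r_{i_1}=1$), so $\boldsymbol i=(\boldsymbol i^-)^+$ and, exactly as in the proof of Corollary~\ref{cor:initialideal}, the pair $(\boldsymbol i^-,\boldsymbol j^+)$ lies in $\varGamma_1\cup\varGamma_3$; hence $x(\boldsymbol i)x(\boldsymbol j^+)\in G_1$. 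For $H_1$, write $f(\boldsymbol i,\boldsymbol j)=x(\boldsymbol i^+)x(\boldsymbol j)-x(\boldsymbol i)x(\boldsymbol j^+)$ with $(\boldsymbol i,\boldsymbol j)\in\varGamma_1$: the first monomial is a generator of $G_1$; and since $(\boldsymbol i,\boldsymbol j)\in\varGamma_1$ forces $i_3\ge 2$, the second monomial equals $x((\boldsymbol i^-)^+)x(\boldsymbol j^+)$ with $(\boldsymbol i^-,\boldsymbol j^+)\in\varGamma_1\cup\varGamma_3$, so it too lies in $G_1$; hence $f(\boldsymbol i,\boldsymbol j)\in\langle G_1\rangle$. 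Finally, for $H_2$ and $(\boldsymbol i,\boldsymbol j)\in\varGamma_2$: if $i_3=1$ then $\boldsymbol i^*=\boldsymbol i$ and $\boldsymbol j^*=\boldsymbol j^+$ by the definitions in Notation~\ref{not:g}, so $f(\boldsymbol i,\boldsymbol j)=g(\boldsymbol i,\boldsymbol j)\in G_2$; and if $i_3\ge 2$ then one checks that $(\boldsymbol i^-,\boldsymbol j^+)\in\varGamma_2$ and that the monomial $x(\boldsymbol i^*)x(\boldsymbol j^*)$ common to $g(\boldsymbol i,\boldsymbol j)$ and $g(\boldsymbol i^-,\boldsymbol j^+)$ cancels, leaving $f(\boldsymbol i,\boldsymbol j)=g(\boldsymbol i,\boldsymbol j)-g(\boldsymbol i^-,\boldsymbol j^+)\in\langle G_2\rangle$.

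The identities themselves are routine telescopings; the only genuine obstacle is the index bookkeeping. At each rewriting step I must confirm that the shifted pair obeys the defining inequalities of the claimed $\varGamma_k$ as well as the total order on $\Lambda$, which means paying close attention to the boundary cases $i_3=1$, $j_3+1=r_{j_1}$, and the case where $\boldsymbol i$ and $\boldsymbol j$ lie in the same Jordan block. Three elementary facts carry all of these: (a) $(\boldsymbol i,\boldsymbol j)\in\varGamma_1\cup\varGamma_4$ implies $i_3\ge 2$; (b) on $\varGamma_2$ one has $j_3+1<r_{j_1}$ unless $i_3=1$; and (c) $\boldsymbol i>\boldsymbol j$ implies $r_{i_1}\ge r_{j_1}$, because the sizes $r_1>\cdots>r_\ell$ are strictly decreasing — and (c) is precisely what yields $(i_3-1)+(j_3+1)\ge r_{j_1}+1$, the inequality that puts a shifted pair back into $\varGamma_1\cup\varGamma_3$.
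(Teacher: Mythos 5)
Your proof is correct and follows essentially the same route as the paper's: the same telescoping identity expressing $g(\boldsymbol i,\boldsymbol j)$ as a sum of elements of $H_2$, the same case split $i_3=1$ versus $i_3\ge 2$ for $H_2$, and the same index-shift $(\boldsymbol i,\boldsymbol j)\mapsto(\boldsymbol i^-,\boldsymbol j^+)$ landing in $\varGamma_1\cup\varGamma_3$ to absorb $H_1$ and $H_4$ into $\langle G_1\rangle$. The only differences are cosmetic (order of the two inclusions, and spelling out the $H_4$ argument that the paper delegates to Corollary~\ref{cor:initialideal}).
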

\begin{proof}
We can assume $r_1 > 1$, since otherwise $I_\lambda =  \langle 0 \rangle$ and $G = \{0\}$.
%
First, we show $I_\lambda \subseteq \langle G \rangle$. To do so, it is enough to prove that $H_k \subseteq \langle G \rangle$ for each $k \in \{1,2,3,4\}$. By definition, it is clear that $H_3 \subseteq G_1 \subseteq \langle G \rangle$. As was shown before,  $H_4 \subseteq G_1 \subseteq \langle G \rangle$. 

Suppose $(\boldsymbol i, \boldsymbol j) = \left((i_1,i_2,i_3),(j_1,j_2,j_3)\right) \in \varGamma_1$, which implies $x(\boldsymbol i^+)x(\boldsymbol j) \in G_1$.  Also, $i_3+j_3 \geq r_{j_1}+1$, and so if $i_3=1$, then $j_3 = r_{j_1}$, which is impossible. This implies $\left(\boldsymbol i^-, \boldsymbol j^+\right) \in \varGamma_1 \cup \varGamma_3$, because $i_3-1 \geq 1$ and $(i_3-1)+(j_3+1) = i_3+j_3 \geq r_{j_1}+1$. Hence $x(\boldsymbol i)x(\boldsymbol j^+) \in G_1$, and thus $f(\boldsymbol i, \boldsymbol j) \in \langle G_1 \rangle \subseteq \langle G \rangle$. Therefore, $H_1 \subseteq \langle G \rangle$, and it remains only to show that $H_2 \subseteq \langle G \rangle$. 

Let  $(\boldsymbol i, \boldsymbol j) = \left((i_1,i_2,i_3),(j_1,j_2,j_3)\right) \in \varGamma_2$. If $i_3 = 1$, then $f(\boldsymbol i, \boldsymbol j) = g(\boldsymbol i, \boldsymbol j) \in G_2$, and hence we may assume $i_3\geq 2$. As $i_3-1 \geq 1$ and $(i_3-1)+(j_3+1) = i_3+j_3  \leq r_{j_1}$, $\left(\boldsymbol i^-,\boldsymbol j^+\right) \in \varGamma_2$. Then $f(\boldsymbol i, \boldsymbol j) = g(\boldsymbol i, \boldsymbol j) - g\left(\boldsymbol i^-,\boldsymbol j^+\right)$. Thus $f(\boldsymbol i, \boldsymbol j) \in \langle G_2 \rangle \subseteq \langle G \rangle$. 

To prove $\langle G \rangle \subseteq I_\lambda$, it suffices to show that $G_1,G_2 \subseteq I_\lambda$. The containment $G_1 \subseteq I_\lambda$ follows immediately from the definition of $G_1$ and Lemma~\ref{lem:monomial}.  To show $G_2 \subseteq I_\lambda$, let $(\boldsymbol i, \boldsymbol j) \in \varGamma_2$. Then $\left((i_1,i_2,i_3-p),(j_1,j_2,j_3+p)\right) \in \varGamma_2$ for every $p \in \{0,1,\cdots, i_3-1\}$. It is easy to see that $g(\boldsymbol i, \boldsymbol j)$ is the sum of the binomials $f\left((i_1,i_2,i_3-p),(j_1,j_2,j_3+p)\right)$, $0 \leq p \leq i_3-1$. In particular, $g(\boldsymbol i, \boldsymbol j) \in \langle H_2 \rangle \subseteq I_\lambda$, and hence $G_2 \subseteq I_\lambda$. 
\end{proof}
\begin{proposition}
\label{prop:groebner}
The set $G$ is a reduced Gr\"obner basis for $I_\lambda$ with respect to $>_\mathrm{grevlex}$. 
\end{proposition}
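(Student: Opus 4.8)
The plan is to establish the two defining properties of a reduced Gröbner basis separately: first that $G$ is a Gröbner basis, then that it is reduced. For the Gröbner basis property, I would combine Lemma~\ref{lem:generatingset} with a comparison of leading terms against the known Gröbner basis $H$ from Theorem~\ref{thm:zaare-nahandi}. By Lemma~\ref{lem:generatingset} we know $\langle G\rangle = I_\lambda$, so it suffices to show that the leading terms of the elements of $G$ generate the same initial ideal as $H$, namely $\mathrm{in}_{>_{\mathrm{grevlex}}}(I_\lambda) = \langle x(\boldsymbol i^+)x(\boldsymbol j) \mid (\boldsymbol i,\boldsymbol j) \in \varGamma_1 \cup \varGamma_2 \cup \varGamma_3\rangle$ by Corollary~\ref{cor:initialideal}. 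The elements of $G_1$ are exactly the monomials $x(\boldsymbol i^+)x(\boldsymbol j)$ for $(\boldsymbol i,\boldsymbol j)\in\varGamma_1\cup\varGamma_3$, so these contribute precisely that part of the initial ideal. For $G_2$, I would check that the leading term of $g(\boldsymbol i,\boldsymbol j) = x(\boldsymbol i^+)x(\boldsymbol j) - x(\boldsymbol i^*)x(\boldsymbol j^*)$ under $>_{\mathrm{grevlex}}$ is $x(\boldsymbol i^+)x(\boldsymbol j)$; this is the key leading-term computation and is where I expect the bulk of the verification to lie. Concretely, with $\boldsymbol i = (i_1,i_2,i_3)$, $\boldsymbol j = (j_1,j_2,j_3)$, $(\boldsymbol i,\boldsymbol j)\in\varGamma_2$, one has $\boldsymbol i^+ = (i_1,i_2,i_3+1)$, $\boldsymbol i^* = (i_1,i_2,1)$, $\boldsymbol j^* = (j_1,j_2,i_3+j_3)$, and one must verify $x(\boldsymbol i^+)x(\boldsymbol j) >_{\mathrm{grevlex}} x(\boldsymbol i^*)x(\boldsymbol j^*)$. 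Since both monomials have degree $2$, this reduces to the reverse-lexicographic tiebreak: the variable ordering on $\Lambda$ makes $x(\boldsymbol j)$ and $x(\boldsymbol j^*)$ the "smaller" variables in their respective products (as $\boldsymbol j < \boldsymbol i^+$ and $\boldsymbol j^* = (j_1,j_2,i_3+j_3) < \boldsymbol i^* = (i_1,i_2,1)$ because $j_1 \le i_1$ with the third coordinate breaking ties in the right direction); comparing the smallest variables, $\boldsymbol j$ versus $\boldsymbol j^*$, one has $i_3+j_3 > j_3$ so $\boldsymbol j^* < \boldsymbol j$ in the order on $\Lambda$, hence $x(\boldsymbol j^*)$ is revlex-smaller, making $x(\boldsymbol i^*)x(\boldsymbol j^*)$ the smaller monomial. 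Thus the leading term is $x(\boldsymbol i^+)x(\boldsymbol j)$, and since $(\boldsymbol i,\boldsymbol j)\in\varGamma_2$, this monomial lies in the stated initial ideal. Together with $G_1$ we recover all of $\mathrm{in}_{>_{\mathrm{grevlex}}}(I_\lambda)$, so $G$ is a Gröbner basis.

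For reducedness, I would argue directly: no term of any element of $G$ is divisible by the leading term of any other element. The leading terms of $G$ are the monomials $\{x(\boldsymbol i^+)x(\boldsymbol j) \mid (\boldsymbol i,\boldsymbol j) \in \varGamma_1\cup\varGamma_2\cup\varGamma_3\}$ (with the $\varGamma_3$ ones coming from $G_1$, the $\varGamma_1$ ones from $G_1$, and the $\varGamma_2$ ones as leading terms of the binomials $g(\boldsymbol i,\boldsymbol j)$). Since each element of $G_1$ is a single monomial equal to its own leading term, I need: (a) no leading term properly divides another leading term — but all leading terms are squarefree quadratic monomials, so divisibility among them forces equality, and distinct index pairs give distinct monomials; and (b) for each $g(\boldsymbol i,\boldsymbol j)\in G_2$, the trailing term $x(\boldsymbol i^*)x(\boldsymbol j^*)$ is not divisible by any leading term in $G$. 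This last point is exactly what Remark~\ref{rem:minimal} and the construction in Notation~\ref{not:g} were engineered to achieve: the passage from $f$ to $g$ replaced the trailing term $x(\boldsymbol i)x(\boldsymbol j^+)$ of $f$ — which was divisible by elements of $H_3$ or reducible further via $H_2$ — by the fully-reduced $x(\boldsymbol i^*)x(\boldsymbol j^*)$ with $\boldsymbol i^* = (i_1,i_2,1)$. I would verify that $x(\boldsymbol i^*)x(\boldsymbol j^*)$ is a normal form: checking that $x(\boldsymbol i^*)x(\boldsymbol j^*)$ is not of the form $x(\boldsymbol k^+)x(\boldsymbol l)$ for any $(\boldsymbol k,\boldsymbol l)\in\varGamma_1\cup\varGamma_2\cup\varGamma_3$ — the obstruction being that $\boldsymbol i^*$ has third coordinate $1$, so it cannot be written as $\boldsymbol k^+$ (which requires third coordinate $\ge 2$), and $\boldsymbol j^* = (j_1,j_2,i_3+j_3)$ cannot serve as a $\boldsymbol k^+$ paired with $\boldsymbol i^*$ as the $\boldsymbol l$ because $\boldsymbol i^* > \boldsymbol j^*$ would be required but the order forces $\boldsymbol i^* = (i_1,i_2,1)$ to compare with $\boldsymbol j^* = (j_1,j_2,i_3+j_3)$ in a way that, combined with $i_3+j_3 \le r_{j_1}$, leaves no valid $\varGamma_k$ membership. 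This case analysis on which coordinate positions can arise is the second place where care is needed.

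The main obstacle I anticipate is the reducedness verification in part (b): one must rule out, for every $g(\boldsymbol i,\boldsymbol j)\in G_2$, that its trailing monomial $x(\boldsymbol i^*)x(\boldsymbol j^*)$ is divisible by any of the leading monomials coming from $\varGamma_1$, $\varGamma_2$, or $\varGamma_3$, and this requires tracking how the third coordinates interact under the $\varGamma_k$ membership conditions ($i_3+j_3 \le r_{j_1}$ for $\varGamma_2$, $j_3 = r_{j_1}$ for $\varGamma_3$, etc.). The leading-term computation in the first part is the other technical point, though it is a routine unwinding of the grevlex definition once the variable order on $\Lambda$ is recalled. The generating-set containment is already handled by Lemma~\ref{lem:generatingset}, so no work is needed there.
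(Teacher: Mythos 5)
Your proposal is correct and follows essentially the same route as the paper: the Gr\"obner basis property comes from Lemma~\ref{lem:generatingset} together with the fact that the leading terms of $G$ reproduce the initial ideal of Corollary~\ref{cor:initialideal} (your grevlex computation showing $\mathrm{LT}(g(\boldsymbol i,\boldsymbol j))=x(\boldsymbol i^+)x(\boldsymbol j)$ is the needed verification), and reducedness comes from checking that the trailing term $x(\boldsymbol i^*)x(\boldsymbol j^*)$ cannot be a leading monomial $x(\boldsymbol k^+)x(\boldsymbol l)$ for any $(\boldsymbol k,\boldsymbol l)\in\varGamma$, which is exactly the paper's observation that $((i_1,i_2,0),(j_1,j_2,i_3+j_3))\notin\varGamma$ (you in fact treat the second matching, $\boldsymbol k^+=\boldsymbol j^*$, which the paper leaves implicit). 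Two harmless slips: the inequality should read $i_1\le j_1$ rather than $j_1\le i_1$ (though the conclusion $\boldsymbol j^*<\boldsymbol i^*$ is still correct), and the leading terms need not all be squarefree (e.g.\ $x(\boldsymbol i^+)x(\boldsymbol j)$ with $\boldsymbol i^+=\boldsymbol j$), but equal total degree already forces the divisibility-implies-equality conclusion you need.
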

\begin{proof}
From the construction of $G$ and Corollary~\ref{cor:initialideal}, it follows that the leading terms of the polynomials in $G$ generate $\mathrm{in}_{>_\mathrm{grevlex}} (I_\lambda)$. Thus $G$ is a Gr\"obner basis for $I_\lambda$ with the desired monomial order.  
The leading coefficient of each element of $G$ is $1$. Since $\varGamma_1 \cup \varGamma_3$ and $\varGamma_2$ are disjoint, no leading term of an element in $G$ divides any other such leading term. Furthermore, if $g$ is a binomial in $G$, i.e., $g(\boldsymbol i,\boldsymbol j) = x(\boldsymbol i^+)x(\boldsymbol j) - x(\boldsymbol i^*)x(\boldsymbol j^*)$ for some $(\boldsymbol i, \boldsymbol j) \in \varGamma_2$, then the non-leading term of $g$ is not divisible by the leading terms of the elements in $G$, because $((i_1,i_2,0),(j_1,j_2,i_3+j_3))\not\in\varGamma$. Therefore, $G$ is reduced. 
\end{proof}
The following corollary is a consequence of either Proposition~\ref{prop:groebner} or Remark~\ref{rem:minimal}.
\begin{corollary}
\label{cor:nondegenerate} 
The eigenscheme associated to $J_\lambda$ is non-degenerate; that is,
it is not a subscheme of any proper linear subspace of~$\P^{r-1}$.
\end{corollary}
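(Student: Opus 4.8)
The plan is to deduce Corollary~\ref{cor:nondegenerate} from the explicit description of the reduced Gr\"obner basis $G$ in Proposition~\ref{prop:groebner}, or equivalently from the minimal Gr\"obner basis $\bigcup_{i=1}^3 H_i$ identified in Remark~\ref{rem:minimal}. The key observation is that saying the eigenscheme of $J_\lambda$ is non-degenerate is the same as saying that $I_\lambda$ contains no nonzero linear form, i.e., $(I_\lambda)_1 = 0$. So the entire statement reduces to checking that the degree-one graded piece of $I_\lambda$ vanishes.

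First I would handle the trivial case $r_1 = 1$: then $J_\lambda = \lambda \cdot \mathrm{Id}$ is a scalar matrix, $I_\lambda = \langle 0 \rangle$, and the eigenscheme is all of $\P^{r-1}$, which is certainly non-degenerate. For the main case $r_1 > 1$, the crucial point is that every element of the Gr\"obner basis $G = G_1 \cup G_2$ is homogeneous of degree $2$: the elements of $G_1$ are the monomials $x(\boldsymbol i^+)x(\boldsymbol j)$ with $(\boldsymbol i, \boldsymbol j) \in \varGamma_1 \cup \varGamma_3$, and the elements of $G_2$ are the binomials $g(\boldsymbol i, \boldsymbol j) = x(\boldsymbol i^+)x(\boldsymbol j) - x(\boldsymbol i^*)x(\boldsymbol j^*)$ with $(\boldsymbol i, \boldsymbol j) \in \varGamma_2$, which are also quadratic. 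Since $G$ generates $I_\lambda$ and $G$ consists entirely of forms of degree $2$ (in the standard grading), any homogeneous element of $I_\lambda$ of degree $1$ would have to be an $R$-linear combination of degree-$2$ generators with coefficients of degree $-1$, which is impossible. Hence $(I_\lambda)_1 = 0$.

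Finally I would translate this back into geometry. If the eigenscheme $Z_{J_\lambda} = \operatorname{Proj}(R/I_\lambda)$ were contained in a proper linear subspace $L = V(\sum c_i x_i) \subsetneq \P^{r-1}$ defined by some nonzero linear form $\ell$, then $\ell$ would lie in the saturation of $I_\lambda$; but since $I_\lambda$ is generated in degree $2$ and is homogeneous, and since one checks that $\mathfrak{m} = \langle x(\boldsymbol i) \mid \boldsymbol i \in \Lambda\rangle$ is not an associated prime here (the generators of $I_\lambda$ involve genuinely distinct variables, so $R/I_\lambda$ has positive depth, or more elementarily $(I_\lambda : \mathfrak{m})_1 \subseteq (I_\lambda)_1 = 0$ wait — one needs $\ell \in I_\lambda$ directly from $\ell \cdot \mathfrak{m} \subseteq I_\lambda$ being a contradiction). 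The cleanest route: a linear form $\ell$ vanishes on the scheme iff $\ell \in I_\lambda$ up to saturation, but in fact it suffices to note that the scheme-theoretic linear span is cut out by $(I_\lambda)_1$, and we have shown $(I_\lambda)_1 = 0$, so the linear span is all of $\P^{r-1}$. The main (very minor) obstacle is just being careful about the distinction between $I_\lambda$ and its saturation; I expect this is dispensed with by the standard fact that the linear forms vanishing on a closed subscheme of $\P^{r-1}$ are exactly the degree-one part of the (saturated) defining ideal, and that adding degree-one elements to a degree-$\geq 2$-generated ideal strictly increases it, so $(I_\lambda^{\mathrm{sat}})_1 = (I_\lambda)_1 = 0$.
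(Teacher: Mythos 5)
Your reduction of non-degeneracy to a statement about linear forms is right in spirit, but you reduce it to the wrong statement. Non-degeneracy of the scheme $\operatorname{Proj}(R/I_\lambda)$ is equivalent to the \emph{saturation} $I_\lambda : \mathfrak{m}^\infty$ containing no nonzero linear form, not to $(I_\lambda)_1 = 0$. The latter is immediate (all generators are quadrics), but it does not imply the former: saturating can create linear forms. For instance, $I = \langle x^2, xy, xz\rangle = x\cdot\langle x,y,z\rangle$ in $\K[x,y,z]$ is generated by quadrics, yet its saturation is $\langle x\rangle$, so the associated subscheme of $\P^2$ is the line $V(x)$, which is degenerate. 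Your closing claim that ``adding degree-one elements to a degree-$\geq 2$-generated ideal strictly increases it, so $(I_\lambda^{\mathrm{sat}})_1 = (I_\lambda)_1$'' is exactly where the argument breaks: the saturation \emph{does} in general strictly increase the ideal, and nothing you have said rules out the new elements being linear.

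This is precisely where the paper's proof does its real work, and where the Gr\"obner basis $G$ must actually be used rather than only the degrees of its elements. The paper supposes $L = \sum_{\boldsymbol i} c(\boldsymbol i)x(\boldsymbol i)$ is a nonzero linear form with $x(1,1,1)^m L \in I_\lambda$ for some $m$, takes $\boldsymbol i$ largest with $c(\boldsymbol i)\neq 0$, and observes that the grevlex leading monomial $x(1,1,1)^m x(\boldsymbol i)$ of $x(1,1,1)^m L$ is divisible by no leading monomial $x(\boldsymbol a^+)x(\boldsymbol b)$ of an element of $G$: the index $\boldsymbol a^+$ always has third coordinate at least $2$, so $x(\boldsymbol a^+)\neq x(1,1,1)$, and the remaining possibilities are excluded by the constraint $\boldsymbol a > \boldsymbol b$. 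Hence $x(1,1,1)^m L \notin I_\lambda$, a contradiction, and no linear form lies in the saturation. Your proof needs this (or an equivalent) argument; as written it has a genuine gap.
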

\begin{proof}
If $r_1 = 1$, then $I_\lambda = \langle 0 \rangle$, and hence there is nothing to prove.
So assume that $r_1 \not= 1$. It is enough to show that the saturation of $I_\lambda$ with respect to the irrelevant ideal $\mathfrak{m} = \langle x(\boldsymbol i) \ | \ \boldsymbol i \in \Lambda \rangle$ of $R$ contains no linear forms.   

Suppose for the contradiction that $I_\lambda:\mathfrak{m}^\infty$ contains a linear form $L = \sum_{\boldsymbol i \in \Lambda} c(\boldsymbol i) x(\boldsymbol i)$ with $c(\boldsymbol i) \in \K$. Then there exists a positive integer $m$ such that $x(1,1,1)^m L \in I_\lambda$. Since $L$ is non-zero, there must be an $\boldsymbol i \in \Lambda$ such that $c(\boldsymbol i) \not=0$. Assume that $\boldsymbol i$ is the largest such element of $\Lambda$.  Then the leading monomial of $x(1,1,1)^mL$ is $x(1,1,1)^mx(\boldsymbol i)$. This monomial is not an element of $\mathrm{in}_{>_\mathrm{grevlex}} (I_\lambda)$, because it is not a multiple of the leading monomials of any element of $G$. This is a contradiction, and thus there are no linear forms contained in $I_\lambda:\mathfrak{m}^\infty$. 
\end{proof}
\begin{remark}
Let $A \in \K^{r \times r}$. Suppose that $\lambda \in \K$ is the only eigenvalue of $A$. Then Corollary~\ref{cor:nondegenerate} implies that the generalized eigenspace of $A$ corresponding to $\lambda$ coincides with the scheme-theoretic linear span of the eigenscheme of $A$.
\end{remark}
\subsection{Primary decomposition of $I_\lambda$}
The remainder of this section is devoted to a primary decomposition
of~$I_\lambda$ into $\ell$ ideals.
%
For $j \in \{1, \dots, \ell\}$ let 
\[
\Lambda_j = \{(i_1,i_2,i_3) \in \Lambda \ | \ 1 \leq i_1 \leq j\}.
\] 
Consider the following two subsets of $\Lambda$:  
\[
\Lambda_{j,1} = \{(i_1,i_2,i_3) \in \Lambda_j \ | \  \ r_{j}+1 \leq i_3 \leq r_{i_1}\}
\ \ 
\mbox{and} 
\ \ 
\Lambda_{j,2} =  \Lambda \setminus \Lambda_j. 
\]
Note that $\Lambda_{1,1}  = \Lambda_{\ell,2} = \emptyset$. 
We define two ideals of $R$ as follows: 
\[
I_{j,1} = 
\left\{
\begin{array}{ll}
\langle 0\rangle  & \mbox{if $j = 1$} \\
 \left\langle x(\boldsymbol i) \ \left| \ \boldsymbol i \in \Lambda_{j,1} \right. \right\rangle & \mbox{if $2 \leq j \leq \ell$} 
\end{array}
\right. 
 \ \ 
\mbox{and}   
\ \
I_{j,2} = 
\left\{
\begin{array}{ll}
 \left\langle x(\boldsymbol i) \ \left| \ \boldsymbol i \in \Lambda_{j,2} \right. \right\rangle & \mbox{if $1 \leq j \leq \ell-1$}  \\
\langle 0 \rangle 
& \mbox{if $j = \ell$.} 
\end{array}
\right. 
\]
Let $I_j$ be the sum of $I_{j,1}$ and $I_{j,2}$. The following example illustrates the notation.
\begin{example}
\label{ex:three-blocks}
Let  $\ell = 3$, $k_1 = k_2 = k_3 = 1$, $r_1=4$, $r_2 = 3$, and $r_3 = 2$, i.e., 
\[
J_\lambda = 
\left(
\begin{array}{cccc|ccc|cc}
\lambda & 1 & & & & & & &   \\
& \lambda & 1 & & & & & &  \\
& & \lambda & 1 & & & & & \\
& & & \lambda & & & & &  \\ \hline 
& & & & \lambda & 1 & & &  \\ 
& & & & & \lambda & 1 & &   \\
& & & & & & \lambda & &   \\ \hline 
& & & & & & & \lambda & 1 \\
& & & & & & & & \lambda 
\end{array}
\right). 
\]
Then 
\begin{eqnarray*}
 \Lambda_1 & = & \{(1,1,1), (1,1,2), (1,1,3),(1,1,4)\}, \\
 \Lambda_2 & = & \Lambda_1 \cup \{(2,1,1), (2,1,2),(2,1,3)\}, \\
 \Lambda_3 & = & \Lambda. 
\end{eqnarray*}
We thus obtain $\Lambda_{1,1}  = \Lambda_{3,2} = \emptyset$ and 
\begin{eqnarray*}
\Lambda_{1,2} & = & \{(2,1,1),(2,1,2),(2,1,3),(3,1,1),(3,1,2)\}, \\
\Lambda_{2,1} & = & \{(1,1,4)\}, \\
\Lambda_{2,2} & = & \{(3,1,1),(3,1,2)\}, \\
\Lambda_{3,1} & = & \{(1,1,3),(1,1,4),(2,1,3)\}.
\end{eqnarray*}
This implies $I_{1,1}  = I_{3,2} = \langle 0 \rangle$ and 
\begin{eqnarray*}
I_{1,2} & = & \left\langle x(2,1,1), x(2,1,2),x(2,1,3),x(3,1,1),x(3,1,2)\right\rangle, \\
I_{2,1} & = & \left\langle x(1,1,4)\right\rangle, \\
I_{2,2} & = &  \left\langle x(3,1,1), x(3,1,2)\right\rangle,\\
I_{3,1} & = &  \left\langle x(1,1,3), x(1,1,4),x(2,1,3) \right\rangle, 
\end{eqnarray*}
from which it follows that 
\begin{eqnarray*}
I_1 & = & \left\langle x(2,1,1), x(2,1,2),x(2,1,3),x(3,1,1),x(3,1,2)\right\rangle, \\
I_2  & = & \left\langle x(1,1,4), x(3,1,1), x(3,1,2)\right\rangle,\\
I_3 & = & \left\langle x(1,1,3),x(1,1,4),x(2,1,3)\right\rangle. 
\end{eqnarray*}
It is easy to check (e.g., with Macaulay2~\cite{M2}) that
$I_\lambda+I_1$, $I_\lambda+I_2$, and $I_\lambda+I_3$ are the three primary components in a primary
decomposition of $I_\lambda$.
\end{example}
Let $j \in \{1, \dots, \ell\}$ and let
$\mathfrak{q}_{\lambda,j} = I_\lambda+I_j$.
Example~\ref{ex:three-blocks} suggests that
$\bigcap_{j=1}^\ell \mathfrak{q}_{\lambda,j}$ is a primary
decomposition of $I_\lambda$.  The next proposition establishes that
the intersection is correct.
\begin{proposition}
\label{prop:decomposition}
$I_\lambda = \bigcap_{j=1}^\ell \mathfrak{q}_{\lambda,j}$. 
\end{proposition}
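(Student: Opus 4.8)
The plan is to prove the two inclusions $I_\lambda \subseteq \bigcap_{j=1}^\ell \mathfrak q_{\lambda,j}$ and $\bigcap_{j=1}^\ell \mathfrak q_{\lambda,j} \subseteq I_\lambda$ separately. The first inclusion is immediate: since $\mathfrak q_{\lambda,j} = I_\lambda + I_j \supseteq I_\lambda$ for every $j$, the intersection contains $I_\lambda$ trivially, and no work is needed. All the content is in the reverse inclusion, and for that I would work with the reduced Gr\"obner basis $G = G_1 \cup G_2$ of $I_\lambda$ from Proposition~\ref{prop:groebner} (assuming $r_1 > 1$; the case $r_1 = 1$ is trivial since then $I_\lambda = \langle 0\rangle$ and $\ell = 1$, so $\mathfrak q_{\lambda,1} = \langle 0\rangle$ as well). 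The strategy is to take an arbitrary element $f \in \bigcap_{j=1}^\ell \mathfrak q_{\lambda,j}$ and show its normal form modulo $G$ with respect to $>_{\mathrm{grevlex}}$ is zero, equivalently that every monomial in the $\K$-basis of $R/I_\lambda$ (the standard monomials, i.e.\ those not in $\mathrm{in}_{>_\mathrm{grevlex}}(I_\lambda)$) that survives in $R/I_\lambda$ cannot lie in $\bigcap_j (\mathfrak q_{\lambda,j}/I_\lambda)$ unless it is zero. Concretely: pass to $\bar R := R/I_\lambda$, let $\bar I_j$ be the image of $I_j$, and prove $\bigcap_{j=1}^\ell \bar I_j = 0$ in $\bar R$.

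The key combinatorial point is understanding what $\bar I_j$ looks like. Each $I_j$ is generated by variables $x(\boldsymbol i)$ for $\boldsymbol i$ ranging over the explicit index set $\Lambda_{j,1} \cup \Lambda_{j,2}$, so $\bar I_j$ is generated by the images of these variables in $\bar R$. I would first identify the standard monomials of $R/I_\lambda$ explicitly from Corollary~\ref{cor:initialideal}: a monomial is standard iff it contains no factor $x(\boldsymbol i^+)x(\boldsymbol j)$ with $(\boldsymbol i,\boldsymbol j) \in \varGamma_1 \cup \varGamma_2 \cup \varGamma_3$. Then, for a standard monomial $m$ to lie in $\bar I_j$, it must be divisible (modulo $I_\lambda$) by some variable in the generating set of $I_j$; using the binomial relations in $G_2$, divisibility modulo $I_\lambda$ can be tracked by "sliding" exponents along the chains within each Jordan block. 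The heart of the argument is a counting/support argument: the index sets $\Lambda_{j,1} \cup \Lambda_{j,2}$ are designed so that, as $j$ runs from $1$ to $\ell$, every variable $x(\boldsymbol i)$ with $\boldsymbol i = (i_1,i_2,i_3)$ and $i_3 \geq 2$ (equivalently, every variable that is "not a bottom-of-block" variable, these being exactly the ones that can be nonzero-divisors modulo a component) appears in the generating set of $I_j$ for at least one $j$ — while the "bottom" variables $x(i_1,i_2,1)$, whose monomials generate the radical-like behaviour, survive in all the $\bar R/\bar I_j$ simultaneously only in the degree-zero part. I would make this precise by showing: a standard monomial $m \neq \text{const}$ that lies in $\bigcap_j \bar I_j$ would have to avoid, for each $j$, every variable in $\Lambda_{j,1}\cup\Lambda_{j,2}$ modulo the binomial slides; checking the definitions of $\Lambda_{j,1} = \{(i_1,i_2,i_3): i_1 \le j,\ r_j+1 \le i_3 \le r_{i_1}\}$ and $\Lambda_{j,2} = \Lambda \setminus \Lambda_j$, one sees these sets exhaust all of $\Lambda$ as $j$ varies in the relevant way, forcing $m$ to be trivial.

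The main obstacle I anticipate is the bookkeeping of "divisibility modulo $I_\lambda$": because $I_\lambda$ is binomial with the relations $g(\boldsymbol i,\boldsymbol j) = x(\boldsymbol i^+)x(\boldsymbol j) - x(\boldsymbol i^*)x(\boldsymbol j^*)$ from $G_2$, a monomial can be equivalent mod $I_\lambda$ to another monomial supported on different variables, so membership of a standard monomial in $\bar I_j$ is not simply a question of which variables literally divide it. I would handle this by working with the explicit reduced Gr\"obner basis: reduce $m \cdot (\text{generator of } I_j)$ — or rather reduce a general element of $\mathfrak q_{\lambda,j}$ — against $G$ and show the resulting normal forms, intersected over $j$, collapse. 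An alternative, possibly cleaner, route is to prove the intersection equality by a direct ideal-membership argument: given $f \in \bigcap_j \mathfrak q_{\lambda,j}$, write $f = h_j + (\text{element of } I_\lambda)$ where $h_j$ lies in the monomial ideal $I_j$ for each $j$, compare the $h_j$, and use that $\bigcap_j I_j$ together with the Gr\"obner basis relations lands inside $I_\lambda$. I expect the cleanest writeup reduces everything to a statement about standard monomials and the combinatorics of the sets $\Lambda_{j,1}, \Lambda_{j,2}$, with the binomial slides absorbed into the observation that $G_2$'s trailing terms involve only variables indexed by $(i_1,i_2,1)$ and $(j_1,j_2,i_3+j_3)$, both of which are controlled by the $\Lambda_{j,*}$ stratification.
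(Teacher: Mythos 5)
Your reduction of the problem to showing $\bigcap_{j=1}^\ell \bar I_j = 0$ in $R/I_\lambda$ is sound, and a Gr\"obner-basis argument along these lines could in principle be made to work; but as written the proposal has two genuine gaps. First, the combinatorial claim you place at the heart of the argument is false: it is not true that every variable $x(i_1,i_2,i_3)$ with $i_3\geq 2$ lies in some $\Lambda_{j,1}\cup\Lambda_{j,2}$, nor do these sets ``exhaust all of $\Lambda$.'' For instance, if $r_\ell\geq 2$ then $x(1,1,2)$ belongs to no $\Lambda_{j,1}$ (that would require $r_j\leq 1$) and to no $\Lambda_{j,2}$ (that would require $1>j$). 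The statement you actually need is different and genuinely requires proof: for any system of representatives $\boldsymbol a_j\in\Lambda_{j,1}\cup\Lambda_{j,2}$, one for each $j$, the least common multiple of the $x(\boldsymbol a_j)$ lies in $\mathrm{in}_{>_\mathrm{grevlex}}(I_\lambda)$ (equivalently, no nonconstant standard monomial is simultaneously divisible by a generator of every $I_j$); this is a case analysis on pairs $\boldsymbol a_j,\boldsymbol a_{j'}$ using the description of the initial ideal in Corollary~\ref{cor:initialideal}, and you have not carried it out. Second, the ``divisibility modulo $I_\lambda$'' obstacle you flag is real and is not absorbed by remarks about the trailing terms of $G_2$: to conclude that a standard monomial lies in $\mathfrak q_{\lambda,j}=I_\lambda+I_j$ only if it is literally divisible by a variable indexed by $\Lambda_{j,1}\cup\Lambda_{j,2}$, you need a Gr\"obner basis of $\mathfrak q_{\lambda,j}$ itself, not just of $I_\lambda$. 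The paper supplies exactly this in Proposition~\ref{prop:reduced-groebner} (namely that $G\cup G_j'$ is a Gr\"obner basis for $\mathfrak q_{\lambda,j}$); proving that statement first would close this gap cleanly.

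For comparison, the paper avoids standard monomials entirely: it shows by induction on $p$, using the modular law, that $\bigcap_{j=1}^p(I_\lambda+I_j)=I_\lambda+I_{p,2}$, which at $p=\ell$ gives the claim since $I_{\ell,2}=\langle 0\rangle$. The inductive step reduces to $(I_\lambda+I_{p,2})\cap I_{p+1,1}\subseteq I_\lambda$, which is handled by a distributivity argument over the subring in the variables of $\Lambda_p$ together with the monomial containment $I_{p,2}\cap I_{p+1,1}\subseteq I_\lambda$ supplied by Lemma~\ref{lem:monomial}. That containment plays precisely the role of the product statement missing from your sketch.
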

\begin{proof}
  We only need to show
  $I_\lambda \supseteq \bigcap_{j=1}^\ell (I_\lambda+I_j)$, as the
  other containment is obvious. It suffices to show that
\[
\bigcap_{j=1}^p \ (I_\lambda+I_j) = I_\lambda+I_{p,2}
\]
for every $p \in \{1, \dots, \ell\}$, because then $\bigcap_{j=1}^\ell (I_\lambda+I_j) = I_\lambda+I_{\ell,2} = I_\lambda + \langle 0\rangle = I_\lambda$. 
Since $I_{1,1} = \langle 0\rangle$, we have $I_1 = I_{1,2}$. Therefore, the statement is trivial for $p=1$. Suppose that $\bigcap_{j=1}^p (I_\lambda+I_j) = I_\lambda+I_{p,2}$ for some $p$. Then
\[
\bigcap_{j=1}^{p+1}\  (I_\lambda+I_j)  =  (I_\lambda+I_{p,2}) \cap (I_\lambda+I_{p+1}). 
\]
Since $( I_\lambda+I_{p,2}) \supseteq I_\lambda$ and $I_{p,2} \supseteq I_{p+1,2}$, the modular law for ideals implies 
\begin{eqnarray*}
( I_\lambda+I_{p,2}) \cap (I_\lambda+I_{p+1}) & = & I_\lambda + ( I_\lambda+I_{p,2}) \cap I_{p+1} \\
& = & I_\lambda +  ( I_\lambda+I_{p,2}) \cap (I_{p+1,1}+I_{p+1,2}) \\
& = & I_\lambda + I_{p+1,2} +  ( I_\lambda+I_{p,2}) \cap I_{p+1,1}. 
\end{eqnarray*}
Therefore, it is enough to show that 
\begin{equation}
\label{eq:subset}
(I_\lambda+I_{p,2}) \cap I_{p+1,1} \subseteq I_\lambda.
\end{equation}
To do so, we first prove that $(I_\lambda+I_{p,2}) \cap I_{p+1,1}  =  I_\lambda \cap I_{p+1,1} + I_{p,2} \cap I_{p+1,1}$. Clearly, $(I_\lambda+I_{p,2}) \cap I_{p+1,1}  \supseteq I_\lambda \cap I_{p+1,1} + I_{p,2} \cap I_{p+1,1}$. Thus we need to show the containment 
\begin{equation}
\label{eq:containment}
(I_\lambda+I_{p,2}) \cap I_{p+1,1}  \subseteq I_\lambda \cap I_{p+1,1} + I_{p,2} \cap I_{p+1,1}. 
\end{equation}

Denote by $\boldsymbol x_p$ the vector of variables $x(\boldsymbol i)$, $\boldsymbol i \in \Lambda_p$. Let $J_\lambda(p) = \bigoplus_{i=1}^{p} k_iJ_{\lambda,r_i}$ and $I_\lambda(p)$ the ideal generated by the entries of $J_\lambda(p) \boldsymbol x_p \wedge \boldsymbol x_p$. Then 
\[
I_\lambda+I_{p,2} = I_\lambda(p) + I_{p,2}. 
\]

Let $f_1 \in (I_\lambda(p) + I_{p,2}) \cap I_{p+1,1}$. Then there
exist $f_2 \in I_\lambda(p)$ and $h \in I_{p,2}$ such that
$f_1 = f_2+h$. Let $S = \K[x(\boldsymbol i) \ | \ \boldsymbol i \in \Lambda_p]$
and let $\{g_1, \dots, g_m\} \subset S[x(\boldsymbol i) \ | \ \boldsymbol i
\in \Lambda_{p,2}]$
be a generating set for~$I_\lambda(p)$.  
Let $f_2 = a_1g_1 +\cdots +a_mg_m$ with $a_1, \dots, a_m \in R$. Since
$R$ can be identified with
$S[x(\boldsymbol i) \ | \ \boldsymbol i \in \Lambda_{p,2}]$, by the
definition of $I_{p,2}$, we can write $a_i = a_i'+a_i''$ for each
$i \in \{1, \dots, m\}$, where $a_i' \in S$ and $a_i'' \in
I_{p,2}$. Therefore,
\[
f_2 = \sum_{i=1}^m a'g_i + \sum_{i=1}^m a_i''g_i  
\]
with $\sum_{i=1}^m a_i'g_i \in S$ and $\sum_{i=1}^m a_i''g_i \in I_\lambda(p) \cap I_{p,2}$. Thus we may assume $f_2 \in S \cap I_\lambda(p)$. 
 
Similarly, one can show that there exist $f_2' \in S \cap I_{p +1,1}$ and $h' \in I_{p,2} \cap I_{p+1,1}$ such that $f_1 = f_2'+h'$. Thus we get $f_1= f_2+h= f_2'+h'$, which implies $f_2-f_2' = h'-h \in I_{p,2}$. This means $f_2-f_2'=0$ (and hence $h -h' = 0$), because  $f_2-f_2' \in S$.  Thus, $f_2, f_2' \in I_\lambda(p) \cap I_{p+1,1}$ and $h, h' \in I_{p,2} \cap I_{p+1,1}$. Therefore, $f_1 = f_2+h \in I_\lambda(p)  \cap I_{p+1,1} + I_{p,2} \cap I_{p+1,1}$, from which 
 \[
 (I_\lambda+I_{p,2}) \cap I_{p+1,1} = (I_\lambda(p) + I_{p,2}) \cap I_{p+1,1} \subseteq  I_\lambda (p) \cap I_{p+1,1} + I_{p,2} \cap I_{p+1,1}
 \]
follows.  Because 
\[
I_\lambda (p) \cap I_{p+1,1} + I_{p,2} \cap I_{p+1,1} \subseteq I_\lambda  \cap I_{p+1,1} + I_{p,2} \cap I_{p+1,1},
\] 
we proved containment~(\ref{eq:containment}). Therefore, we get the equality 
\[
(I_\lambda+I_{p,2}) \cap I_{p+1,1} =  I_\lambda \cap I_{p+1,1} + I_{p,2} \cap I_{p+1,1}.
\] 

To prove (\ref{eq:subset}), it is sufficient to show that $I_{p,2} \cap I_{p+1,1} \subseteq I_\lambda$, because  $I_\lambda \cap I_{p+1,1}$ is clearly a subset of $I_\lambda$.  Since $\Lambda_{p+1,1}$ and $\Lambda_{p,2}$ are disjoint, we obtain 
\[
I_{p,2} \cap I_{p+1,1} = 
\left\langle  x(\boldsymbol i)x(\boldsymbol j) \ \left| \ \boldsymbol i \in \Lambda_{p+1,1}, \boldsymbol j \in \Lambda_{p,2}\right.\right\rangle.  
\]
Let $\boldsymbol i = (i_1,i_2,i_3) \in \Lambda_{p+1,1}$ and $\boldsymbol j = (j_1,j_2,j_3) \in \Lambda_{p,2}$. Then $i_3 \geq r_{p+1}+1$ and $j_1 >p$. Thus $r_{p+1} \geq r_{j_1}$, and hence  
\[
i_3+j_3 \geq r_{p+1} +1+1 >r_{j_1} +1, 
\]  
which implies that $i_3-1+j_3 \geq r_{j_1} +1$. Thus $(\boldsymbol i^-,\boldsymbol j) \in \varGamma_1 \cup \varGamma_3$, and hence the monomial $x(\boldsymbol i)x(\boldsymbol j)$ is contained in $I_\lambda$ by Lemma~\ref{lem:monomial}. Therefore, we proved $I_{p,2} \cap I_{p+1,1} \subseteq I_\lambda$, which completes the proof. 
\end{proof}
Our goal is Theorem~\ref{thm:primary} which shows that each ideal in
the decomposition in Proposition~\ref{prop:decomposition} is primary.
For this we need some preparation.
\begin{proposition}
\label{prop:reduced-groebner}
For each $j \in \{1, \dots, \ell\}$, let $G_j' = \{x(\boldsymbol i) \ | \ \boldsymbol i \in \Lambda_{j,1} \cup \Lambda_{j,2}\}$. Then $G \cup G_j'$ is a Gr\"obner basis for $\mathfrak{q}_{\lambda,j}$ with respect to $>_\mathrm{grevlex}$.  
\end{proposition}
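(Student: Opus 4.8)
The plan is to show that $G \cup G_j'$ is a Gröbner basis for $\mathfrak{q}_{\lambda,j} = I_\lambda + I_j$ with respect to $>_{\mathrm{grevlex}}$ by verifying the two defining properties: (i) $G \cup G_j'$ generates $\mathfrak{q}_{\lambda,j}$, and (ii) the leading terms of the elements of $G \cup G_j'$ generate $\mathrm{in}_{>_{\mathrm{grevlex}}}(\mathfrak{q}_{\lambda,j})$. For (i), observe that $G$ generates $I_\lambda$ by Lemma~\ref{lem:generatingset}, while $G_j' = \{x(\boldsymbol i) \mid \boldsymbol i \in \Lambda_{j,1} \cup \Lambda_{j,2}\}$ generates $I_j = I_{j,1} + I_{j,2}$ directly from the definitions of $I_{j,1}$ and $I_{j,2}$ (the $r_1=1$ and $j=1,\ell$ boundary cases are degenerate and handled separately). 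So $\langle G \cup G_j' \rangle = I_\lambda + I_j = \mathfrak{q}_{\lambda,j}$.

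For (ii) I would use Buchberger's criterion, reducing S-polynomials of pairs from $G \cup G_j'$. The S-polynomials among elements of $G$ alone already reduce to zero, since $G$ is a Gröbner basis for $I_\lambda$ by Proposition~\ref{prop:groebner}. The S-polynomial of two linear monomials $x(\boldsymbol i), x(\boldsymbol j) \in G_j'$ reduces to zero trivially (their leading terms are coprime). The remaining pairs are $(g, x(\boldsymbol k))$ with $g \in G$ and $x(\boldsymbol k) \in G_j'$: here I want to show the S-polynomial reduces to zero modulo $G \cup G_j'$. If $x(\boldsymbol k)$ does not divide the leading term of $g$, the two leading terms have gcd dividing $x(\boldsymbol k)$ alone, so the S-polynomial reduces to zero provided the \emph{other} term of $g$ (when $g$ is a binomial $g(\boldsymbol i,\boldsymbol j) = x(\boldsymbol i^+)x(\boldsymbol j) - x(\boldsymbol i^*)x(\boldsymbol j^*)$ with $(\boldsymbol i,\boldsymbol j)\in\varGamma_2$) is handled; but one checks that $x(\boldsymbol k)$ multiplied into a binomial of $G_2$ gives a new binomial whose nonleading term, after multiplication by $x(\boldsymbol k)$, is again divisible by some leading term in $G \cup G_j'$ or by $x(\boldsymbol k)$ itself, so the tail reduces. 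If $x(\boldsymbol k)$ does divide the leading term of $g$, then $\mathrm{lcm}$ of the two leading terms equals the leading term of $g$, and the S-polynomial is (up to sign) the nonleading term of $g$ times a variable; for $g \in G_1$ (a monomial) this is zero, and for $g \in G_2$ the nonleading term is $x(\boldsymbol i^*)x(\boldsymbol j^*)$ with $\boldsymbol i^* = (i_1,i_2,1)$ and $\boldsymbol j^* = (j_1,j_2,i_3+j_3)$, which I claim lies in $\mathfrak{q}_{\lambda,j}$ and in fact reduces to zero — either because one of $\boldsymbol i^*,\boldsymbol j^*$ lies in $\Lambda_{j,1}\cup\Lambda_{j,2}$ (so the monomial is in $\langle G_j'\rangle$), or because $x(\boldsymbol i^*)x(\boldsymbol j^*)$ is divisible by a leading monomial of $G$ via the index arithmetic already exploited in Lemma~\ref{lem:monomial} and the proof of Proposition~\ref{prop:decomposition}.

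The main obstacle I expect is the bookkeeping in the last case: one must show that whenever $x(\boldsymbol k)$ ($\boldsymbol k \in \Lambda_{j,1}\cup\Lambda_{j,2}$) divides the leading monomial $x(\boldsymbol i^+)x(\boldsymbol j)$ of a binomial $g(\boldsymbol i,\boldsymbol j)\in G_2$, the resulting tail monomial $x(\boldsymbol i^*)x(\boldsymbol j^*)$ again lies in the ideal generated by the leading terms of $G \cup G_j'$, and this requires a careful case split on whether $\boldsymbol k = \boldsymbol i^+$ or $\boldsymbol k = \boldsymbol j$ and on the position of $\boldsymbol k$ relative to the block-size thresholds $r_j, r_{j+1}$ defining $\Lambda_{j,1}$ and $\Lambda_{j,2}$. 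In each subcase the inequalities on $i_3, j_3$ forced by $(\boldsymbol i,\boldsymbol j)\in\varGamma_2$ together with $\boldsymbol k\in\Lambda_{j,1}\cup\Lambda_{j,2}$ should pin down enough about $\boldsymbol i^*$ or $\boldsymbol j^*$ to conclude. An alternative, possibly cleaner route to (ii) is to compute $\mathrm{in}_{>_{\mathrm{grevlex}}}(\mathfrak{q}_{\lambda,j})$ directly: show it equals $\mathrm{in}_{>_{\mathrm{grevlex}}}(I_\lambda) + \langle x(\boldsymbol i) \mid \boldsymbol i \in \Lambda_{j,1}\cup\Lambda_{j,2}\rangle$ by checking that adding these variables to $R/I_\lambda$ and passing to the quotient is compatible with taking initial ideals — i.e., that $I_\lambda : x(\boldsymbol k)^\infty$-type saturations do not enlarge the initial ideal beyond what $G$ and the monomials predict — but the Buchberger computation is more self-contained and I would carry that out.
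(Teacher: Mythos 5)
Your strategy is the paper's: both arguments invoke Buchberger's criterion, use that $G$ is already a Gr\"obner basis for $I_\lambda$ (Proposition~\ref{prop:groebner}) and that $G_1$ and $G_j'$ consist of monomials, and thereby reduce to the S-pairs of a binomial $f_1=g(\boldsymbol i,\boldsymbol j)=x(\boldsymbol i^+)x(\boldsymbol j)-x(\boldsymbol i^*)x(\boldsymbol j^*)\in G_2$ against a variable $x(\boldsymbol a)\in G_j'$; and in both the only nontrivial case is $\boldsymbol a\in\{\boldsymbol i^+,\boldsymbol j\}$, where the S-polynomial is the tail $-x(\boldsymbol i^*)x(\boldsymbol j^*)$.

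The step you flag as ``the main obstacle'' and leave unexecuted is, however, essentially the entire content of the paper's proof, so it should not be left as a claim. It resolves as follows. If $\boldsymbol i$ or $\boldsymbol j$ lies in $\Lambda_{j,2}$, then so does $\boldsymbol i^*$ or $\boldsymbol j^*$ (membership in $\Lambda_{j,2}$ depends only on the first index), and the tail is a multiple of an element of $G_j'$; so one may assume $\boldsymbol i,\boldsymbol j\notin\Lambda_{j,2}$ and hence $\boldsymbol a\in\Lambda_{j,1}$. If $\boldsymbol a=\boldsymbol i^+$, then $i_3+1\geq r_j+1$ and so $i_3+j_3\geq r_j+1$; if $\boldsymbol a=\boldsymbol j$, then $j_3\geq r_j+1$ and again $i_3+j_3\geq r_j+1$. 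Combined with $i_3+j_3\leq r_{j_1}$ (from $(\boldsymbol i,\boldsymbol j)\in\varGamma_2$) and $j_1\leq j$, this gives $\boldsymbol j^*=(j_1,j_2,i_3+j_3)\in\Lambda_{j,1}$, so the tail is a multiple of $x(\boldsymbol j^*)\in G_j'$ and reduces to zero. In particular, of the two alternatives you conjecture it is always the first that occurs, and no appeal to Lemma~\ref{lem:monomial} or to the index arithmetic of Proposition~\ref{prop:decomposition} is needed. The rest of your outline (generation of $\mathfrak{q}_{\lambda,j}$ via Lemma~\ref{lem:generatingset}, triviality of the monomial--monomial and coprime pairs) is correct and matches the paper.
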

\begin{proof}
  We us Notation~\ref{not:f} and Notation~\ref{not:g}.
  Since $G$ is a Gr\"obner basis for $I_\lambda$ with respect to
  $>_\mathrm{grevlex}$ and $G_1$ and $G'_j$ consist of monomials, it
  suffices to show that if $f_1 \in G_2$ and $f_2\in G_j'$, then
  $S(f_1,f_2)$ reduces to $0$ modulo $G \cup G_j'$.
  Let $f_1 \in G_2$ and let $f_2 \in G_j'$. Then there exist a
  $(\boldsymbol i, \boldsymbol j) = ((i_1,i_2,i_3),(j_1,j_2,j_3)) \in
  \varGamma_2$ such that
  \[
    f_1 = x(\boldsymbol i^+)x(\boldsymbol j)-x(\boldsymbol i^*)x(\boldsymbol j^*) 
  \]
  and $\boldsymbol a \in \Lambda_{j,1} \cup \Lambda_{j,2}$ such that
  $f_2 = x(\boldsymbol a)$.
  First assume that $\lcm(LT(f_1),LT(f_2)) = x(\boldsymbol i^+)x(\boldsymbol j)x(\boldsymbol a)$. Then 
  \[
    S(f_1,f_2) = -x(\boldsymbol i^*)x(\boldsymbol j^*)x(\boldsymbol a), 
  \] 
  and hence, it is a multiple of $x(\boldsymbol a)$.  Therefore
  $S(f_1,f_2)$ reduces to $0$ modulo $G \cup G_j'$.

  If $\boldsymbol j \in \Lambda_{j,2}$, then $S(f_1,f_2)$ is a
  multiple of an element of $G_j'$. We thus assume that
  $\boldsymbol i, \boldsymbol j \not\in \Lambda_{j,2}$.
  If $\boldsymbol a \in \Lambda_{j,2}$, then $LT(f_1)$ and $LT(f_2)$
  are relatively prime, or
  $\lcm(LT(f_1),LT(f_2)) = x(\boldsymbol i^+)x(\boldsymbol
  j)x(\boldsymbol a)$.
  Thus, we may assume that $\boldsymbol a \in \Lambda_{j,1}$.

  Next assume that
  $\lcm(LT(f_1),LT(f_2)) \not= x(\boldsymbol i^+)x(\boldsymbol
  j)x(\boldsymbol a)$.
  Then $\boldsymbol a \in \{\boldsymbol i^+,\boldsymbol j\}$, and thus
  $S(f_1,f_2) = -x(\boldsymbol i^*)x(\boldsymbol j^*)$.  If
  $\boldsymbol a = \boldsymbol i^+$, then
$
r_j+1=r_{i_1}-(r_{i_1}-r_j)+1 \leq i_3+1,  
$
while if $\boldsymbol a = \boldsymbol j$, then 
$r_j +1 \leq j_3$. In either case, $r_j +1 \leq i_3+j_3$, and hence
$\boldsymbol j^*= (j_1,j_2,i_3+j_3) \in \Lambda_{j,1}$. Therefore,
$S(f_1,f_2)$ reduces to $0$ modulo $G \cup G_j'$.
\end{proof}
To compute the Hilbert polynomial of $R/\mathfrak{q}_{\lambda,j}$, for
each $j \in \{1, \dots, \ell\}$, let:
\begin{equation*}
\Delta_j  = \left\{(i_1,i_2) \in \Delta \ | \ 1 \leq i_1 \leq j\right\}, \qquad 
\varTheta_j  = \left\{(i_1,i_2,1) \ \left| \ (i_1, i_2) \in \Delta_j\right.\right\}. 
\end{equation*}
\begin{proposition}
\label{prop:hilbertFunction}
Let $R_j = R/\mathfrak{q}_{\lambda,j}$. Then 
\[
H_{R_j}(t) = r_j\, {t+k_1+ \cdots k_j-1 \choose t}. 
\]
In particular, the scheme of $\mathfrak{q}_{\lambda,j}$ has dimension $k_1 +\cdots +k_j-1$ and degree~$r_j$. 
\end{proposition}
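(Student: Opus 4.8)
The plan is to reduce everything to counting standard monomials of the initial ideal. By Proposition~\ref{prop:reduced-groebner} the set $G\cup G_j'$ is a Gr\"obner basis of $\mathfrak q_{\lambda,j}$ with respect to $>_\mathrm{grevlex}$, and since the elements of $G_j'$ are the variables $x(\boldsymbol i)$ with $\boldsymbol i\in\Lambda_{j,1}\cup\Lambda_{j,2}$, Corollary~\ref{cor:initialideal} gives
\[
\mathrm{in}_{>_\mathrm{grevlex}}(\mathfrak q_{\lambda,j})=\bigl\langle x(\boldsymbol i^+)x(\boldsymbol j)\mid(\boldsymbol i,\boldsymbol j)\in\varGamma_1\cup\varGamma_2\cup\varGamma_3\bigr\rangle+\bigl\langle x(\boldsymbol i)\mid\boldsymbol i\in\Lambda_{j,1}\cup\Lambda_{j,2}\bigr\rangle.
\]
A Gr\"obner degeneration preserves Hilbert functions, so it suffices to count, in each degree $t$, the monomials not lying in this monomial ideal. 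Killing the variables indexed by $\Lambda_{j,1}\cup\Lambda_{j,2}$ leaves exactly the variables indexed by $\Lambda_j\setminus\Lambda_{j,1}=\{(i_1,i_2,i_3)\mid(i_1,i_2)\in\Delta_j,\ 1\le i_3\le r_j\}$ (using $r_{i_1}\ge r_j$ for $i_1\le j$), so the standard monomials of $\mathfrak q_{\lambda,j}$ are the monomials in these variables not divisible by any $x(\boldsymbol i^+)x(\boldsymbol j)$ with $(\boldsymbol i,\boldsymbol j)\in\varGamma_1\cup\varGamma_2\cup\varGamma_3$.

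Next I would describe this family of monomials explicitly. Inspecting the defining inequalities shows $\varGamma_1\cup\varGamma_2\cup\varGamma_3=\{(\boldsymbol i,\boldsymbol j)\in\varGamma\mid i_3<r_{i_1}\}$. Call a surviving variable $x(a_1,a_2,a_3)$ \emph{high} if $a_3\ge2$ and \emph{low} if $a_3=1$. From the description of the generators one checks: (a) for any high variable $x(\boldsymbol a)^2\in\mathrm{in}_{>_\mathrm{grevlex}}(I_\lambda)$, via $\boldsymbol i=\boldsymbol a^-,\ \boldsymbol j=\boldsymbol a$; (b) $x(\boldsymbol a)x(\boldsymbol b)\in\mathrm{in}_{>_\mathrm{grevlex}}(I_\lambda)$ for any two distinct high variables; (c) for a high $x(a_1,a_2,a_3)$ and a low $x(b_1,b_2,1)$, one has $x(a_1,a_2,a_3)x(b_1,b_2,1)\in\mathrm{in}_{>_\mathrm{grevlex}}(I_\lambda)$ if and only if $(a_1,a_2)>(b_1,b_2)$ in the order on $\Delta$; and a product of two low variables is never in the initial ideal. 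It follows that a standard monomial of degree $t$ is exactly one of: type (A), an arbitrary monomial of degree $t$ in the $N:=|\Delta_j|=k_1+\cdots+k_j$ low variables $\{x(\boldsymbol i)\mid\boldsymbol i\in\varTheta_j\}$; or type (B), a product $x(a_1,a_2,a_3)\cdot m$ with $2\le a_3\le r_j$, $(a_1,a_2)\in\Delta_j$, and $m$ an arbitrary monomial of degree $t-1$ in the low variables $x(b_1,b_2,1)$ with $(b_1,b_2)\ge(a_1,a_2)$.

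It then remains to count. Type (A) contributes $\binom{t+N-1}{t}$. For type (B), write $p(a_1,a_2):=\#\{(b_1,b_2)\in\Delta_j\mid(b_1,b_2)\ge(a_1,a_2)\}$, so the admissible $m$ of degree $t-1$ number $\binom{t+p(a_1,a_2)-2}{t-1}$; as $(a_1,a_2)$ runs through the totally ordered set $\Delta_j$ the quantity $p$ takes each value of $\{1,\dots,N\}$ exactly once, and there are $r_j-1$ choices of $a_3$ for each. Hence, for $t\ge1$, type (B) contributes $(r_j-1)\sum_{p=1}^N\binom{t+p-2}{t-1}=(r_j-1)\binom{t+N-1}{t}$ by the hockey-stick identity, and adding the two counts gives $\dim_\K(R_j)_t=r_j\binom{t+N-1}{t}$ for every $t\ge1$. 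Since the right-hand side is a numerical polynomial in $t$ of degree $N-1$ with leading coefficient $1/(N-1)!$, it is the Hilbert polynomial $H_{R_j}$, and the scheme of $\mathfrak q_{\lambda,j}$ therefore has dimension $N-1=k_1+\cdots+k_j-1$ and degree $(N-1)!\cdot r_j/(N-1)!=r_j$. The step I expect to require the most care is item (c) above---pinning down exactly which low variables may occur alongside a given high variable---together with the binomial bookkeeping it feeds into; the remaining steps are essentially mechanical.
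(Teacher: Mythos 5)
Your proof is correct and follows essentially the same route as the paper: both reduce to counting standard monomials of the Gr\"obner degeneration from Proposition~\ref{prop:reduced-groebner}, and your type (A)/(B) classification is exactly the paper's characterization (*) of monomials outside the initial ideal. The only difference is cosmetic---you evaluate the count in closed form via the hockey-stick identity, whereas the paper sets up a recursion on $H_{(i_1,i_2)}(t)$ and solves it by induction.
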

\begin{proof}
  For each monomial $\boldsymbol x$ of degree $t$ in $R$, there exists
  a unique non-increasing sequence
  $\boldsymbol i_1, \boldsymbol i_2, \cdots, \boldsymbol i_t \in
  \Lambda$
  such that
  $\boldsymbol x = x(\boldsymbol i_1)x(\boldsymbol i_2) \cdots
  x(\boldsymbol i_t)$.
  By Proposition~\ref{prop:reduced-groebner},
\begin{itemize}
\item[(*)] $\boldsymbol x \not\in \mathrm{in}_{>_\mathrm{grevlex}} (I_\lambda+I_j)$ if and only if $\boldsymbol i_1, \boldsymbol i_2, \cdots, \boldsymbol i_{t-1} \in \varTheta_j$ and $\boldsymbol i_t \in
\Lambda \setminus (\Lambda_{j,1} \cup \Lambda_{j,2})$. 
\end{itemize}
For each $(i_1,i_2) \in \Delta_j$, let 
\[
M_{(i_1,i_2)}(t) = \left\{\left. x(\boldsymbol i_1) \cdots x(\boldsymbol i_t) \not\in \mathrm{in}_{>_\mathrm{grevlex}} (I_\lambda+I_j)\ \right| \  \boldsymbol i_1 = (i_1,i_2,1)\right\}.
\]
and let $H_{(i_1,i_2)}(t) = \left|M_{(i_1,i_2)}(t)\right|$. Then 
\[
H_{R_j}(t) = \sum_{(i_1,i_2) \in \Delta_j} H_{(i_1,i_2)}(t). 
\]

For every $\boldsymbol x \in M_{(i_1,i_2)}(t)$, there exist a unique $(j_1,j_2) \in \Delta_j$ and a unique $\boldsymbol x' \in M_{(j_1,j_2)}(t-1)$ such that $\boldsymbol x = x(i_1,i_2,1)\boldsymbol x'$. This implies 
\[
H_{(i_1,i_2)}(t) = \sum_{(j_1,j_2) \in \Delta_j, \  (j_1,j_2) \leq (i_1,i_2)} H_{(j_1,j_2)}(t-1) 
\]
In particular, $H_{R_j}(t) = H_{(1,1)}(t+1)$. 
By induction, one can deduce that 
\[
H_{(i_1,i_2)}(t) = r_j\,  {t+k_{i_1}-i_2+k_{i_1+1}+ \cdots + k_j-1 \choose t-1}, 
\]
and hence obtain the desired equality
\[
H_{R_j}(t) = H_{(1,1)}(t+1) =  r_j \, {t+k_1+k_2+ \cdots + k_j-1 \choose t}. \qedhere
\]
\end{proof}
\begin{corollary}
\label{cor:regular} 
Let $j \in \{1, \dots, \ell\}$. Then, for every $\boldsymbol i \in \varTheta_j$, the element $x(\boldsymbol i) + \mathfrak{q}_{\lambda,j}$ of $R/\mathfrak{q}_{\lambda,j}$ is a non-zerodivisor. 
\end{corollary}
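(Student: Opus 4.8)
The plan is to show that $x(\boldsymbol i) + \mathfrak{q}_{\lambda,j}$ is a non-zerodivisor in $R/\mathfrak{q}_{\lambda,j}$ for each $\boldsymbol i \in \varTheta_j$ by a Gr\"obner-basis (leading-term) argument, using the explicit reduced Gr\"obner basis $G \cup G_j'$ from Proposition~\ref{prop:reduced-groebner}. Fix $\boldsymbol i = (i_1,i_2,1) \in \varTheta_j$ and suppose $x(\boldsymbol i)\, \bar f = 0$ in $R/\mathfrak{q}_{\lambda,j}$ for some $f \in R$, i.e.\ $x(\boldsymbol i) f \in \mathfrak{q}_{\lambda,j}$; we want $f \in \mathfrak{q}_{\lambda,j}$. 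First I would reduce $f$ to its normal form modulo $G \cup G_j'$, so that without loss of generality $f$ is a $\K$-linear combination of standard monomials, i.e.\ monomials not lying in $\mathrm{in}_{>_\mathrm{grevlex}}(\mathfrak{q}_{\lambda,j})$. If $f \ne 0$, then $\mathrm{LT}(x(\boldsymbol i) f) = x(\boldsymbol i)\,\mathrm{LT}(f)$ must lie in the initial ideal $\mathrm{in}_{>_\mathrm{grevlex}}(\mathfrak{q}_{\lambda,j})$, so the goal is to derive a contradiction from the combination of: $\mathrm{LT}(f)$ is standard, but $x(\boldsymbol i)\,\mathrm{LT}(f)$ is not.

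The key step is a combinatorial analysis of which monomials lie in $\mathrm{in}_{>_\mathrm{grevlex}}(\mathfrak{q}_{\lambda,j})$, using condition~(*) established in the proof of Proposition~\ref{prop:hilbertFunction}. That condition says a monomial $x(\boldsymbol i_1)\cdots x(\boldsymbol i_t)$ (written with $\boldsymbol i_1 \ge \cdots \ge \boldsymbol i_t$ in the order on $\Lambda$) is standard if and only if $\boldsymbol i_1, \dots, \boldsymbol i_{t-1} \in \varTheta_j$ and $\boldsymbol i_t \in \Lambda \setminus (\Lambda_{j,1}\cup\Lambda_{j,2})$. Now write $\mathrm{LT}(f) = x(\boldsymbol a_1)\cdots x(\boldsymbol a_s)$ in non-increasing order; by standardness, $\boldsymbol a_1,\dots,\boldsymbol a_{s-1} \in \varTheta_j$ and $\boldsymbol a_s \in \Lambda \setminus (\Lambda_{j,1}\cup\Lambda_{j,2})$. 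Multiplying by $x(\boldsymbol i)$ with $\boldsymbol i \in \varTheta_j$: since $\varTheta_j$ consists of the (unique) largest-in-order element of each block $(i_1,i_2)$ with $1 \le i_1 \le j$ (namely the third index $=1$, which is maximal within the block under the given order), $x(\boldsymbol i)$ is $\ge$ every element of $\varTheta_j$ and in particular the sorted factorization of $x(\boldsymbol i) f$'s leading term is $x(\boldsymbol b_1)\cdots x(\boldsymbol b_{s+1})$ where $\{\boldsymbol b_1,\dots,\boldsymbol b_s\} = \{\boldsymbol i, \boldsymbol a_1,\dots,\boldsymbol a_{s-1}\} \subseteq \varTheta_j$ and $\boldsymbol b_{s+1} = \boldsymbol a_s \in \Lambda\setminus(\Lambda_{j,1}\cup\Lambda_{j,2})$. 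But then by (*) this product is again standard, contradicting $x(\boldsymbol i)\,\mathrm{LT}(f) \in \mathrm{in}_{>_\mathrm{grevlex}}(\mathfrak{q}_{\lambda,j})$. Hence $f = 0$ in $R/\mathfrak{q}_{\lambda,j}$, as desired.

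The one point requiring care — and the main potential obstacle — is the bookkeeping about the order: one must check that inserting $\boldsymbol i \in \varTheta_j$ into the non-increasing list $\boldsymbol a_1 \ge \cdots \ge \boldsymbol a_{s-1} \ge \boldsymbol a_s$ really does leave $\boldsymbol a_s$ as the unique smallest element (so that the "first $t-1$ in $\varTheta_j$, last one outside" shape of (*) is preserved), rather than displacing $\boldsymbol a_s$ into an interior position. This holds because $\boldsymbol a_s \in \Lambda \setminus (\Lambda_{j,1}\cup\Lambda_{j,2})$ forces $\boldsymbol a_s \notin \varTheta_j$ in general (its third coordinate need not be $1$), while $\boldsymbol i \in \varTheta_j$; but one should verify directly from the definition of the total order on $\Lambda$ that every element of $\varTheta_j$ exceeds $\boldsymbol a_s$ — equivalently, that within the relevant block, the row index $1$ is largest, which is exactly how $\Lambda$ was ordered ($i_3$ smaller $\Rightarrow$ larger). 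A clean way to package the whole argument is to observe that $R/\mathfrak{q}_{\lambda,j}$ has a $\K$-basis of standard monomials, that multiplication by $x(\boldsymbol i)$ with $\boldsymbol i \in \varTheta_j$ maps this basis injectively into itself (sending a standard monomial to another standard monomial, by the shape of (*)), and that an injective $\K$-linear self-map given by multiplication by a ring element is precisely the statement that that element is a non-zerodivisor.
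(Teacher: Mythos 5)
Your overall strategy (reduce to normal form modulo the Gr\"obner basis $G \cup G_j'$ and then argue with leading terms and condition~(*)) is the same as the paper's, but the pivotal combinatorial claim in your argument is false, and it is exactly the point you yourself flagged as ``requiring care.'' You assert that every element of $\varTheta_j$ exceeds $\boldsymbol a_s$ in the order on $\Lambda$, justifying this by the fact that within a block the row index $1$ is largest. That only handles the case where $\boldsymbol i$ and $\boldsymbol a_s$ lie in the \emph{same} block; the order on $\Lambda$ compares the block indices $(i_1,i_2)$ first, so an element of $\varTheta_j$ belonging to a later block is \emph{smaller} than a non-initial row of an earlier block. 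Concretely, take $\ell\ge 2$, $k_1=k_2=1$, $r_1>r_2\ge 2$, and $j=2$: then $x(1,1,2)$ is a standard monomial and $(2,1,1)\in\varTheta_2$, but $(1,1,2)>(2,1,1)$, so the sorted factorization of the product is $x(1,1,2)\,x(2,1,1)$ with first factor not in $\varTheta_2$. Indeed $((1,1,1),(2,1,1))\in\varGamma_2$ and $x(1,1,2)x(2,1,1)$ is the leading term of $g((1,1,1),(2,1,1))=x(1,1,2)x(2,1,1)-x(1,1,1)x(2,1,2)\in G_2\subseteq\mathfrak{q}_{\lambda,2}$. So multiplication by $x(\boldsymbol i)$, $\boldsymbol i\in\varTheta_j$, does \emph{not} send standard monomials to standard monomials, and the contradiction you derive evaporates; your closing ``clean packaging'' (that multiplication permutes the basis of standard monomials into itself) fails for the same reason.

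The corollary itself is still true: in the example above the product reduces to the \emph{different} standard monomial $x(1,1,1)x(2,1,2)$, and what one really needs is that the composite ``multiply by $x(\boldsymbol i)$, then take the normal form'' is injective on the set of standard monomials. Establishing that requires an additional argument — for instance, writing down the normal form of $x(\boldsymbol i)\cdot m$ explicitly using the relations $x(i_1,i_2,1)x(j_1,j_2,j_3)\equiv x(i_1,i_2,j_3)x(j_1,j_2,1)$ coming from $G_2$, and checking injectivity via the multigrading together with a conserved quantity such as the total of the third indices — none of which appears in your proposal. In fairness, the paper's own proof is equally terse at precisely this step (it likewise asserts that no leading monomial of $\mathfrak{q}_{\lambda,j}$ divides that of $h\cdot x(\boldsymbol i)$), so you have reproduced the paper's argument including its gap; but as written your justification of the key order-theoretic claim is incorrect, and the proof is incomplete.
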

\begin{proof}
Let $h \in R$ be an arbitrary $t$-form and let $\boldsymbol i \in \varTheta_j$. We want to show that if 
\[
(h+\mathfrak{q}_{\lambda,j})(x(\boldsymbol i) + \mathfrak{q}_{\lambda,j}) = \mathfrak{q}_{\lambda,j}, 
\]
then $h \in \mathfrak{q}_{\lambda,j}$. By (*) in the proof of Proposition~\ref{prop:hilbertFunction}, the remainder of $h$ on division by $G \cup G_j'$ is a linear combination of monomials of the form $x(\boldsymbol i_1) \cdots x(\boldsymbol i_t)$, where $\boldsymbol i_1, \dots, \boldsymbol i_{t-1} \in \varTheta_j$ and $\boldsymbol i_t \in \Lambda \setminus (\Lambda_{j,1} \cup \Lambda_{j,2})$. Therefore, without loss of generality, we may assume that $h$ is a linear combination of such monomials. Note that $\boldsymbol i_1 \geq \cdots \geq  \boldsymbol i_{t}$.  Thus there is no polynomial in $\mathfrak{q}_{\lambda,j}$ whose leading monomial divides that of $h \cdot x(\boldsymbol i)$, from which it follows that $h=0$. Therefore, we completed the proof. 
\end{proof}
For each $j \in \{1, \dots, \ell\}$, let $\mathfrak{h}_j = \langle x(\boldsymbol i) \ | \ \boldsymbol i \in \Lambda \setminus \varTheta_j \rangle$. Proposition~\ref{prop:radical} and Corollary~\ref{cor:radical} compute the radical of $\mathfrak{q}_{\lambda,j}$. 

\begin{proposition}
\label{prop:radical}
The radical of $I_\lambda$ is $\mathfrak{h}_\ell$. 
\end{proposition}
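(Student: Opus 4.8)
The plan is to show $\sqrt{I_\lambda} = \mathfrak{h}_\ell = \langle x(\boldsymbol i) \mid \boldsymbol i \in \Lambda \setminus \varTheta_\ell\rangle$, where $\varTheta_\ell = \{(i_1,i_2,1) \mid (i_1,i_2)\in \Delta\}$ consists exactly of the ``first-row'' variables of each Jordan block. So $\mathfrak{h}_\ell$ is generated by all variables $x(i_1,i_2,i_3)$ with $i_3 \geq 2$. Geometrically this is the assertion that the reduced eigenscheme of $J_\lambda$ is the union of the projectivized eigenspaces, one coordinate line for each size class. I would handle the two containments separately.

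For $\mathfrak{h}_\ell \subseteq \sqrt{I_\lambda}$, I first note that each generator $x(\boldsymbol i)$ with $i_3 \geq 2$ lies in $\sqrt{I_\lambda}$; by taking suitable powers it suffices to show some power of each such variable is in $I_\lambda$, or better, to use the Gröbner basis description. Actually the cleanest route: use Corollary~\ref{cor:initialideal}, which says $\mathrm{in}_{>_\mathrm{grevlex}}(I_\lambda) = \langle x(\boldsymbol i^+)x(\boldsymbol j) \mid (\boldsymbol i,\boldsymbol j)\in\varGamma_1\cup\varGamma_2\cup\varGamma_3\rangle$. I would argue that for any $\boldsymbol a = (i_1,i_2,i_3)$ with $i_3 \geq 2$, the monomial $x(\boldsymbol a)^N$ lies in $\mathrm{in}_{>_\mathrm{grevlex}}(I_\lambda)$ for $N$ large: indeed $x(\boldsymbol a)^2$ is divisible by some leading term $x(\boldsymbol b^+)x(\boldsymbol c)$ provided we can exhibit an index pair in $\varGamma_1\cup\varGamma_2\cup\varGamma_3$ of the form $(\boldsymbol b,\boldsymbol c)$ with $\boldsymbol b^+ = \boldsymbol a$ and $\boldsymbol c = \boldsymbol a$ — i.e., take $\boldsymbol b = \boldsymbol a^- = (i_1,i_2,i_3-1)$ and $\boldsymbol c = \boldsymbol a = (i_1,i_2,i_3)$; but $\boldsymbol b > \boldsymbol c$ fails since $i_3 - 1 < i_3$ makes $\boldsymbol b < \boldsymbol c$ in the order on $\Lambda$. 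So instead I pair $\boldsymbol a$ with something strictly larger: for any $\boldsymbol c = (c_1,c_2,c_3) \in \Lambda$ with $\boldsymbol c > \boldsymbol a^-$ and $c_3 < r_{c_1}$ and $(i_3-1)+c_3 \geq r_{c_1}+1$, the pair $(\boldsymbol a^-, \boldsymbol c)$ (suitably, its mirror) lands in $\varGamma_1$ or $\varGamma_3$, producing a leading monomial $x(\boldsymbol a)x(\boldsymbol c)$; but I want a power of $x(\boldsymbol a)$ alone. The honest fix is: since $i_3 \geq 2$, consider the pair $((i_1,i_2,i_3-1),(i_1,i_2,i_3-1))$ is not allowed, so instead observe $x(\boldsymbol a)$ itself need not be nilpotent mod $I_\lambda$ unless we use that $\mathfrak{h}_\ell$ is prime-like. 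Better: I will simply show $V(I_\lambda) = V(\mathfrak{h}_\ell)$ set-theoretically and invoke the Nullstellensatz-free fact that the radical of $I_\lambda$ is the intersection of its minimal primes, using the primary decomposition $I_\lambda = \bigcap_j \mathfrak{q}_{\lambda,j}$ from Proposition~\ref{prop:decomposition}; then $\sqrt{I_\lambda} = \bigcap_j \sqrt{\mathfrak{q}_{\lambda,j}}$.

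So the real plan is: compute $\sqrt{\mathfrak{q}_{\lambda,j}}$ for each $j$, and intersect. From Proposition~\ref{prop:reduced-groebner}, $G \cup G_j'$ is a Gröbner basis of $\mathfrak{q}_{\lambda,j}$ with respect to $>_\mathrm{grevlex}$, and by the description (*) in the proof of Proposition~\ref{prop:hilbertFunction}, the standard monomials mod $\mathfrak{q}_{\lambda,j}$ are products $x(\boldsymbol i_1)\cdots x(\boldsymbol i_t)$ with $\boldsymbol i_1,\dots,\boldsymbol i_{t-1}\in\varTheta_j$ and $\boldsymbol i_t \in \Lambda\setminus(\Lambda_{j,1}\cup\Lambda_{j,2})$. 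I claim $\sqrt{\mathfrak{q}_{\lambda,j}} = \langle x(\boldsymbol i) \mid \boldsymbol i \in \Lambda\setminus\varTheta_j\rangle = \mathfrak{h}_j$: on one hand, each $x(\boldsymbol i)$ with $\boldsymbol i \notin \varTheta_j$ is either in $\Lambda_{j,1}\cup\Lambda_{j,2}$ (so $x(\boldsymbol i)\in\mathfrak q_{\lambda,j}$ directly) or has $i_3\geq 2$ and $i_1 \leq j$, $i_3 \leq r_j$, in which case $x(\boldsymbol i)^2$ is a non-standard monomial (its square has $\boldsymbol i_1 = \boldsymbol i \notin \varTheta_j$), hence $x(\boldsymbol i)^2 \in \mathrm{in}_{>_\mathrm{grevlex}}(\mathfrak q_{\lambda,j})$, so $x(\boldsymbol i) \in \sqrt{\mathfrak q_{\lambda,j}}$ after a short induction reducing the exponent — actually $x(\boldsymbol i)^2$ being non-standard already gives $x(\boldsymbol i)^2 \in \mathfrak q_{\lambda,j}$, not merely in the initial ideal, once we use that $G_2$-reductions preserve the ideal; I will verify $x(\boldsymbol i)^2 \in \mathfrak q_{\lambda,j}$ directly. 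Conversely $R/\mathfrak{h}_j$ is a polynomial ring in the variables $\{x(\boldsymbol i)\mid \boldsymbol i\in\varTheta_j\}$, hence a domain, and the image of $\mathfrak q_{\lambda,j}$ in it is zero (every generator of $\mathfrak q_{\lambda,j}$ involves a variable outside $\varTheta_j$: the monomials in $G_1\cup G_j'$ clearly do since their indices have $i_3\geq 2$ or lie in $\Lambda_{j,1}\cup\Lambda_{j,2}$, and each binomial $g(\boldsymbol i,\boldsymbol j)\in G_2$ has both terms involving a variable with third coordinate $\geq 2$), so $\mathfrak q_{\lambda,j}\subseteq\mathfrak{h}_j$ and $\mathfrak{h}_j$ being prime gives $\sqrt{\mathfrak q_{\lambda,j}} \subseteq \mathfrak{h}_j$. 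Hence $\sqrt{\mathfrak q_{\lambda,j}} = \mathfrak{h}_j$. Finally, $\sqrt{I_\lambda} = \bigcap_{j=1}^\ell \mathfrak{h}_j = \langle x(\boldsymbol i) \mid \boldsymbol i \in \Lambda\setminus\bigcap_j(\Lambda\setminus\varTheta_j)^c\rangle$; since $\varTheta_1\subseteq\varTheta_2\subseteq\cdots\subseteq\varTheta_\ell$ (because $\Delta_j$ is increasing in $j$), we have $\bigcap_j \mathfrak{h}_j = \mathfrak{h}_\ell$ — the intersection of these monomial primes is the monomial ideal generated by variables lying outside every $\varTheta_j$, equivalently outside the largest one $\varTheta_\ell$. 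That yields $\sqrt{I_\lambda} = \mathfrak{h}_\ell$.

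The main obstacle is the first containment done cleanly: verifying that $x(\boldsymbol i)^2 \in \mathfrak{q}_{\lambda,j}$ (equivalently that $x(\boldsymbol i)$ is nilpotent modulo $\mathfrak q_{\lambda,j}$) for $\boldsymbol i$ with $i_1\leq j$ and $2\leq i_3\leq r_j$ — one must chase the $G_2$-binomial relations $x(\boldsymbol i^+)x(\boldsymbol j) = x(\boldsymbol i^*)x(\boldsymbol j^*)$, which shift the ``$i_3 \geq 2$'' weight around but can be iterated until a genuine generator in $G_1 \cup G_j'$ (a monomial) is reached; equivalently, one can bypass this by directly using that the square of any variable with third coordinate $\geq 2$ has a leading monomial divisible by a $G_1$-leading monomial after at most $r_1$ applications of $G_2$-rewriting, which terminates. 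Once the nilpotency of these variables mod $\mathfrak q_{\lambda,j}$ is pinned down, the rest is the routine primeness of the coordinate ideal $\mathfrak{h}_j$, the monotonicity $\varTheta_j \uparrow \varTheta_\ell$, and the observation that a finite intersection of monomial primes generated by subsets of the variables is again such a monomial prime, corresponding to the union of the complementary variable sets.
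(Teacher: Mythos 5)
Your overall architecture is sound and in fact reverses the paper's logical order: the paper establishes $\sqrt{I_\lambda}=\mathfrak{h}_\ell$ first (over an algebraically closed field via the Nullstellensatz, since $V(I_\lambda)$ is the eigenspace, and otherwise by a sketched combinatorial nilpotency argument) and only then deduces $\sqrt{\mathfrak{q}_{\lambda,j}}=\mathfrak{h}_j$, whereas you compute the component radicals directly from the Gr\"obner basis of Proposition~\ref{prop:reduced-groebner} and intersect, using the nesting $\varTheta_1\subseteq\cdots\subseteq\varTheta_\ell$. There is no circularity in this, and your arguments for $\mathfrak{q}_{\lambda,j}\subseteq\mathfrak{h}_j$ and for $\bigcap_j\mathfrak{h}_j=\mathfrak{h}_\ell$ are correct.

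The genuine gap is exactly where you flag it: the containment $\mathfrak{h}_j\subseteq\sqrt{\mathfrak{q}_{\lambda,j}}$. Your claim that $x(\boldsymbol i)^2\in\mathfrak{q}_{\lambda,j}$ whenever $i_3\geq 2$ is false, and the inference offered for it --- ``$x(\boldsymbol i)^2$ is a non-standard monomial, hence lies in the ideal'' --- is invalid: non-standardness only places the monomial in the \emph{initial} ideal, while its normal form (a combination of standard monomials of the same multidegree and total degree) need not vanish. Concretely, for a single Jordan block of size $3$ one has $\mathfrak{q}_{\lambda,1}=I_\lambda=\langle x_2^2-x_1x_3,\ x_2x_3,\ x_3^2\rangle$, and the normal form of $x_2^2$ is $x_1x_3\neq 0$, so $x_2^2\notin I_\lambda$ even though $x_2^2$ lies in the initial ideal (here $x_2^3\in I_\lambda$, so nilpotency still holds). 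In general the required exponent grows with $r_{i_1}$ --- the paper's own sketch records exponents depending on $i_3$ and $r_{i_1}$ --- and proving nilpotency requires actually carrying out the iterated straightening $x(\boldsymbol i^+)x(\boldsymbol j)\equiv x(\boldsymbol i)x(\boldsymbol j^+)$ on a high power of $x(\boldsymbol i)$ until a monomial generator is reached. You name this as ``the main obstacle'' but do not perform it, and the simpler statements you substitute for it are incorrect as stated. Until that computation is supplied (or replaced by the Nullstellensatz argument when $\K$ is algebraically closed), the proof is incomplete.
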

\begin{proof} 
  Note that $\mathfrak{h}_\ell$ is the ideal of the eigenspace of
  $J_\lambda$.  If $\K$ is algebraically closed, since the zero-set of
  $I_\lambda$ is the eigenspace, it follows from the Nullstellensatz
  that $\sqrt{I_\lambda} =\mathfrak{h}_\ell$.  If $\K$ is arbitrary
  then a combinatorial proof can be given along the lines of the proof
  of the previous statements in this section.  With
  $\boldsymbol i = (i_1,i_2,i_3) \in \Delta \setminus \varTheta_\ell$,
  one can show that if $i_3 \geq \frac{r_{i_1}+2}{2}$, then
  $x(\boldsymbol i)^2 \in I_\lambda$; while if
  $i_3 \leq\frac{r_{i_1}+1}{2}$, then
  $x(\boldsymbol i)^\beta \in I_\lambda$ with
  $\beta = \left\lfloor\frac{r_{i_1}-2i_3+1}{i_3-1}\right\rfloor$. We
  omit, however, the detailed proof for the sake of brevity.
\end{proof}
\begin{corollary}
\label{cor:radical}
For each $i \in \{1, \dots, \ell\}$, the radical of $\mathfrak{q}_{\lambda,j}$ is $\mathfrak{h}_j$. 
\end{corollary}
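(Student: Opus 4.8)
The plan is to compute $\sqrt{\mathfrak q_{\lambda,j}}$ by handling the two summands in $\mathfrak q_{\lambda,j}=I_\lambda+I_j$ separately. The ideal $I_j=I_{j,1}+I_{j,2}$ is generated by a subset of the variables of $R$, so $R/I_j$ is again a polynomial ring over $\K$; in particular $I_j$ is prime, hence radical. Combining this with Proposition~\ref{prop:radical}, which says $\sqrt{I_\lambda}=\mathfrak h_\ell$, and with the elementary identity $\sqrt{\mathfrak a+\mathfrak b}=\sqrt{\sqrt{\mathfrak a}+\sqrt{\mathfrak b}}$ (immediate from $\mathfrak a+\mathfrak b\subseteq\sqrt{\mathfrak a}+\sqrt{\mathfrak b}\subseteq\sqrt{\mathfrak a+\mathfrak b}$), I would obtain
\[
\sqrt{\mathfrak q_{\lambda,j}}=\sqrt{I_\lambda+I_j}=\sqrt{\sqrt{I_\lambda}+\sqrt{I_j}}=\sqrt{\mathfrak h_\ell+I_j}.
\]
Thus the corollary reduces to the claim $\mathfrak h_\ell+I_j=\mathfrak h_j$: since $\mathfrak h_j$ is generated by variables it is radical, and then $\sqrt{\mathfrak q_{\lambda,j}}=\sqrt{\mathfrak h_j}=\mathfrak h_j$ as desired.

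To prove $\mathfrak h_\ell+I_j=\mathfrak h_j$, note that $\mathfrak h_\ell$ and $I_j$ are both generated by variables, so $\mathfrak h_\ell+I_j=\langle x(\boldsymbol i)\mid\boldsymbol i\in(\Lambda\setminus\varTheta_\ell)\cup\Lambda_{j,1}\cup\Lambda_{j,2}\rangle$, and it suffices to establish the set equality $(\Lambda\setminus\varTheta_\ell)\cup\Lambda_{j,1}\cup\Lambda_{j,2}=\Lambda\setminus\varTheta_j$. I would do this in two steps. First, since $r_1>\cdots>r_\ell\ge 1$, every $(i_1,i_2,i_3)\in\Lambda_{j,1}$ satisfies $i_3\ge r_j+1\ge 2$, so $\Lambda_{j,1}\subseteq\Lambda\setminus\varTheta_\ell$ and the middle term may be dropped. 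Second, unwinding the definitions one sees $\varTheta_\ell=\{(i_1,i_2,i_3)\in\Lambda\mid i_3=1\}$, hence $\Lambda\setminus\varTheta_\ell=\{(i_1,i_2,i_3)\in\Lambda\mid i_3\ge 2\}$, while $\Lambda_{j,2}=\Lambda\setminus\Lambda_j=\{(i_1,i_2,i_3)\in\Lambda\mid i_1>j\}$; so their union is $\{(i_1,i_2,i_3)\in\Lambda\mid i_3\ge 2\text{ or }i_1>j\}$. On the other hand $\varTheta_j=\{(i_1,i_2,1)\in\Lambda\mid i_1\le j\}$, so $\Lambda\setminus\varTheta_j$ is described by the very same condition, and the equality follows.

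I do not expect a genuine obstacle here: this is a short reduction to Proposition~\ref{prop:radical} together with elementary ideal theory and index bookkeeping. The only points needing a little care are (a) the degenerate cases, namely $j=1$ (where $I_{j,1}=\langle 0\rangle$), $j=\ell$ (where $\Lambda_{j,2}=\emptyset$), and $r_1=1$ (where $I_\lambda=\langle 0\rangle$, $\ell=1$, and $\varTheta_\ell=\Lambda$), for which one should check that the displayed equalities persist, and (b) invoking the radical identity above for non-radical summands correctly. Everything else is immediate from the definitions introduced earlier in this section.
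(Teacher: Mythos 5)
Your proposal is correct and follows essentially the same route as the paper's proof: both apply the identity $\sqrt{I_\lambda+I_j}=\sqrt{\sqrt{I_\lambda}+\sqrt{I_j}}$, invoke Proposition~\ref{prop:radical} for $\sqrt{I_\lambda}=\mathfrak h_\ell$, and reduce to the index-set identity $(\Lambda\setminus\varTheta_\ell)\cup\Lambda_{j,1}\cup\Lambda_{j,2}=\Lambda\setminus\varTheta_j$. Your explicit verification of that identity and of the degenerate cases only makes the argument more complete than the paper's.
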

\begin{proof}
It is well-known that
\[
\sqrt{I_\lambda+I_j} = \sqrt{\sqrt{I_\lambda}+\sqrt{I_j}}.
\]
Recall that  $\sqrt{I_\lambda} =  \langle x(\boldsymbol i) \, | \, \boldsymbol i \in \Lambda \setminus \varTheta_\ell\rangle$ and $\sqrt{I_j} = I_j =  \langle x(\boldsymbol i) \, | \, \boldsymbol i \in \Lambda_{j,1} \cup \Lambda_{j,2} \rangle$. Since $\Lambda_{j,1} \subseteq \Lambda \setminus \varTheta_\ell$ and $\varTheta_\ell \setminus \Lambda_{j,2} = \varTheta_j$, we get
\begin{eqnarray*}
\sqrt{I_\lambda}+\sqrt{I_j} & = & \left\langle x(\boldsymbol i) \ \left| \ \boldsymbol i \in (\Lambda \setminus \varTheta_\ell) \cup \Lambda_{j,1} \cup \Lambda_{j,2}\right. \right\rangle \\
& = &  \left\langle x(\boldsymbol i) \ \left| \ \boldsymbol i \in \Lambda \setminus \varTheta_j \right. \right\rangle,   
\end{eqnarray*}
which completes the proof. 
\end{proof}
The following corollary shows that, for each
$j\in \{1, \dots, \ell\}$, $\mathfrak{q}_{\lambda,j}$ is
\emph{cellular}, i.e., each variable of $R$ is either a
non-zerodivisor or nilpotent in $R/\mathfrak{q}_{\lambda,j}$ (see
\cite[Section~6]{eisenbud-sturmfels}).
\begin{corollary}
For each $j \in \{1, \dots, \ell\}$, the ideal $\mathfrak{q}_{\lambda,j}$ is cellular. 
\end{corollary}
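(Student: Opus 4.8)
The plan is to verify the definition of cellularity directly: for each $j$, every variable $x(\boldsymbol i)$ of $R$ is either a non-zerodivisor or nilpotent modulo $\mathfrak{q}_{\lambda,j}$. Since $\Lambda = \varTheta_j \sqcup (\Lambda_{j,1} \cup \Lambda_{j,2}) \sqcup (\text{rest})$ — wait, more carefully, the variables split according to whether $\boldsymbol i \in \varTheta_j$, whether $\boldsymbol i \in \Lambda_{j,1} \cup \Lambda_{j,2}$, or neither. So I would handle these cases in turn.

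First, for $\boldsymbol i \in \varTheta_j$: Corollary~\ref{cor:regular} says exactly that $x(\boldsymbol i) + \mathfrak{q}_{\lambda,j}$ is a non-zerodivisor in $R/\mathfrak{q}_{\lambda,j}$, so these variables are fine. Second, for $\boldsymbol i \in \Lambda_{j,1} \cup \Lambda_{j,2}$: by the definition of $I_j = I_{j,1} + I_{j,2}$, each such $x(\boldsymbol i)$ lies in $I_j \subseteq \mathfrak{q}_{\lambda,j}$, hence $x(\boldsymbol i) + \mathfrak{q}_{\lambda,j} = 0$, which is trivially nilpotent. Third, for the remaining variables $\boldsymbol i \in \Lambda \setminus (\varTheta_j \cup \Lambda_{j,1} \cup \Lambda_{j,2})$: I claim these are nilpotent modulo $\mathfrak{q}_{\lambda,j}$. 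By Corollary~\ref{cor:radical}, $\sqrt{\mathfrak{q}_{\lambda,j}} = \mathfrak{h}_j = \langle x(\boldsymbol i) \mid \boldsymbol i \in \Lambda \setminus \varTheta_j\rangle$, so every variable $x(\boldsymbol i)$ with $\boldsymbol i \notin \varTheta_j$ lies in the radical of $\mathfrak{q}_{\lambda,j}$, i.e., some power of it lies in $\mathfrak{q}_{\lambda,j}$ — that is exactly nilpotence in the quotient.

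Assembling: every $\boldsymbol i \in \Lambda$ is either in $\varTheta_j$ (giving a non-zerodivisor by Corollary~\ref{cor:regular}) or not in $\varTheta_j$ (giving a nilpotent element by Corollary~\ref{cor:radical}). This is a clean dichotomy and covers all variables, so $\mathfrak{q}_{\lambda,j}$ is cellular by definition. In fact the second and third cases can be merged, since $\Lambda_{j,1} \cup \Lambda_{j,2} \subseteq \Lambda \setminus \varTheta_j$, so the single statement ``$\boldsymbol i \notin \varTheta_j \implies x(\boldsymbol i)$ nilpotent mod $\mathfrak{q}_{\lambda,j}$'' handles everything outside $\varTheta_j$; I would phrase it that way for brevity.

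I do not anticipate a serious obstacle here: this corollary is essentially a bookkeeping consequence of the two preceding results, Corollary~\ref{cor:regular} (non-zerodivisors for $\varTheta_j$) and Corollary~\ref{cor:radical} (radical equals $\mathfrak{h}_j$, forcing nilpotence outside $\varTheta_j$). The only point requiring a moment's care is confirming that $\varTheta_j$ and its complement in $\Lambda$ genuinely partition the variable set and that the radical computation applies to $\mathfrak{q}_{\lambda,j}$ rather than just to $I_\lambda$ — but Corollary~\ref{cor:radical} already states $\sqrt{\mathfrak{q}_{\lambda,j}} = \mathfrak{h}_j$, so nothing new is needed. The proof should be three or four sentences.
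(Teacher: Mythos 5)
Your proof is correct and follows essentially the same route as the paper: non-zerodivisors on $\varTheta_j$ come from Corollary~\ref{cor:regular}, and nilpotence off $\varTheta_j$ comes from the radical computation. The only cosmetic difference is that you cite Corollary~\ref{cor:radical} ($\sqrt{\mathfrak{q}_{\lambda,j}}=\mathfrak{h}_j$) in one stroke, whereas the paper re-derives the same fact by splitting into the case $\boldsymbol i\notin\varTheta_\ell$ (Proposition~\ref{prop:radical}) and the case $x(\boldsymbol i)\in I_j$.
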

\begin{proof}
  A monomial is a non-zerodivisor modulo $\mathfrak{q}_{\lambda,j}$ if
  and only if all its variables are such, and it is nilpotent if and only
  if one of its variables is nilpotent.  Corollary~\ref{cor:regular} shows that
  $x(\boldsymbol i) + \mathfrak{q}_{\lambda,j}$ is a non-zerodivisor
  for every for every $\boldsymbol i$ of $\varTheta_j$.  
  Therefore, it is enough to prove 
  that $x(\boldsymbol i)$ is nilpotent for every
  $\boldsymbol i \not\in \varTheta_j$.
  The case $\boldsymbol i \not\in \varTheta_\ell$ is
  Proposition~\ref{prop:radical}, and if
  $\boldsymbol i \in \Lambda \setminus \varTheta_j$, but
  $\boldsymbol i \not\in \Lambda \setminus \varTheta_\ell$, then
  $x(\boldsymbol i) \in I_j \subseteq \mathfrak{q}_{\lambda,j}$. In
  particular, $x(\boldsymbol i) + \mathfrak{q}_{\lambda,j}$ is
  nilpotent in $R/\mathfrak{q}_{\lambda,j}$. 
\end{proof}
\begin{theorem}
\label{thm:primary}
For each $j \in \{1, \dots, \ell\}$, $\mathfrak{q}_{\lambda,j}$ is primary. In particular, the decomposition of $I_\lambda$ in Proposition~\ref{prop:decomposition} is a primary decomposition. 
\end{theorem}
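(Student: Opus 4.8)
The plan is to combine the cellular structure of $\mathfrak{q}_{\lambda,j}$, established in the preceding corollary, with the explicit reduced Gr\"obner basis from Proposition~\ref{prop:reduced-groebner}, in order to show that the associated primes of $R/\mathfrak{q}_{\lambda,j}$ reduce to the single prime $\mathfrak{h}_j$ computed in Corollary~\ref{cor:radical}. Recall the standard fact that a cellular binomial ideal whose cell variables (here the $x(\boldsymbol i)$ with $\boldsymbol i \in \varTheta_j$) generate, modulo the ideal, a domain is automatically primary: if all non-cell variables are nilpotent and the quotient by the ideal of non-cell variables plus $\mathfrak{q}_{\lambda,j}$ is an integral domain, then any zerodivisor is nilpotent. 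So the crux is to verify that
\[
R/\mathfrak{q}_{\lambda,j} \Big/ \bigl(\text{image of } \mathfrak{h}_j\bigr) \;\cong\; R / \mathfrak{h}_j
\]
is a domain — but $\mathfrak{h}_j = \langle x(\boldsymbol i) \mid \boldsymbol i \in \Lambda \setminus \varTheta_j\rangle$ is generated by variables, so this quotient is just a polynomial ring on the variables $x(\boldsymbol i)$, $\boldsymbol i \in \varTheta_j$, hence a domain. It remains only to check that this quotient ring really is $R/(\mathfrak{q}_{\lambda,j} + \mathfrak{h}_j)$, i.e. that $\mathfrak{q}_{\lambda,j} + \mathfrak{h}_j = \mathfrak{h}_j$, equivalently $\mathfrak{q}_{\lambda,j} \subseteq \mathfrak{h}_j$; this is immediate since every generator of $I_\lambda$ (a monomial or binomial in which at least one of the two variable-indices lies outside $\varTheta_j$, as one reads off from the shape of $\varGamma_1,\dots,\varGamma_4$) and every generator $x(\boldsymbol i)$ of $I_j$ lies in $\mathfrak{h}_j$.

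Concretely I would carry out the argument as follows. First, recall from the corollary just proved that $\mathfrak{q}_{\lambda,j}$ is cellular with cell variables exactly $\{x(\boldsymbol i) \mid \boldsymbol i \in \varTheta_j\}$: these are non-zerodivisors (Corollary~\ref{cor:regular}) and every other variable is nilpotent. Second, let $f \in R$ be a zerodivisor modulo $\mathfrak{q}_{\lambda,j}$; I must show $f$ is nilpotent modulo $\mathfrak{q}_{\lambda,j}$, i.e. $f \in \sqrt{\mathfrak{q}_{\lambda,j}} = \mathfrak{h}_j$ by Corollary~\ref{cor:radical}. Using the reduced Gr\"obner basis $G \cup G_j'$ of $\mathfrak{q}_{\lambda,j}$ (Proposition~\ref{prop:reduced-groebner}), replace $f$ by its normal form; by property $(*)$ recorded in the proof of Proposition~\ref{prop:hilbertFunction}, the standard monomials are products $x(\boldsymbol i_1)\cdots x(\boldsymbol i_t)$ with $\boldsymbol i_1,\dots,\boldsymbol i_{t-1}\in\varTheta_j$ and $\boldsymbol i_t \in \Lambda\setminus(\Lambda_{j,1}\cup\Lambda_{j,2})$. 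Split $f = f_0 + f_1$ where $f_0$ collects the standard monomials all of whose indices lie in $\varTheta_j$ and $f_1$ collects those with $\boldsymbol i_t \notin \varTheta_j$; then $f_1 \in \mathfrak{h}_j$, so it suffices to treat $f_0$. But a nonzero $f_0$ is a polynomial purely in the cell variables, and multiplying it by any monomial $m$ in the cell variables produces a nonzero normal form (leading monomials multiply, and no Gr\"obner basis leading term divides a monomial in cell variables alone, again by $(*)$); hence $f_0$ is a non-zerodivisor modulo $\mathfrak{q}_{\lambda,j}$. Since $f = f_0 + f_1$ is a zerodivisor and $f_1$ is nilpotent, $f_0$ must itself be a zerodivisor — forcing $f_0 = 0$, hence $f = f_1 \in \mathfrak{h}_j$, as required.

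The main obstacle is the clean bookkeeping in the last step: one needs that the set of standard monomials supported entirely on the cell variables $\varTheta_j$ is multiplicatively closed and that multiplication by such a monomial is injective on normal forms, which is exactly what property $(*)$ and the disjointness of $\varGamma_1\cup\varGamma_3$ from $\varGamma_2$ (used in the proof of Proposition~\ref{prop:groebner}) guarantee — but it must be stated carefully so that "zerodivisor $\Rightarrow$ the cell-supported part vanishes" is airtight. Once that is in place, the final sentence of the theorem follows at once: Proposition~\ref{prop:decomposition} exhibits $I_\lambda = \bigcap_{j=1}^\ell \mathfrak{q}_{\lambda,j}$, and since each $\mathfrak{q}_{\lambda,j}$ is primary with radical $\mathfrak{h}_j$, and the $\mathfrak{h}_j$ are pairwise distinct primes (they involve different numbers of variables, by Corollary~\ref{cor:radical}), this is an irredundant primary decomposition of $I_\lambda$.
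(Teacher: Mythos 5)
Your overall strategy is genuinely different from the paper's: you argue directly that every zerodivisor modulo $\mathfrak{q}_{\lambda,j}$ lies in $\mathfrak{h}_j=\sqrt{\mathfrak{q}_{\lambda,j}}$ and is therefore nilpotent, whereas the paper invokes Theorem~8.1 of Eisenbud--Sturmfels to describe the associated primes of a cellular binomial ideal and then kills their binomial parts by a multi-homogeneity argument. Your reductions are fine: passing to normal forms via Proposition~\ref{prop:reduced-groebner}, splitting $f=f_0+f_1$ with $f_1\in\mathfrak{h}_j$ nilpotent by Corollary~\ref{cor:radical}, and observing that a zerodivisor minus a nilpotent is again a zerodivisor.

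The gap is the pivotal claim that a nonzero $f_0$ supported on the cell variables $\{x(\boldsymbol i)\mid \boldsymbol i\in\varTheta_j\}$ is a non-zerodivisor modulo $\mathfrak{q}_{\lambda,j}$. What you actually verify is that $f_0\cdot m$ has nonzero normal form for every \emph{monomial $m$ in the cell variables}; that shows $f_0$ is not annihilated by such monomials, not that $f_0$ annihilates nothing. To get the non-zerodivisor property you must show $f_0h\notin\mathfrak{q}_{\lambda,j}$ for an arbitrary nonzero normal form $h$, and there the leading-term argument breaks down, because the product of a cell variable with a standard monomial need not be standard: for example, when $\ell\geq 2$ and $r_2\geq 2$, the product $x(2,1,1)\cdot x(1,1,2)$ of the cell variable $x(2,1,1)\in\varTheta_2$ with the standard monomial $x(1,1,2)$ is the leading term of $g((1,1,1),(2,1,1))\in G_2$ and hence lies in $\mathrm{in}_{>_\mathrm{grevlex}}(\mathfrak{q}_{\lambda,2})$. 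Note moreover that the assertion ``everything outside $\mathfrak{h}_j$ is a non-zerodivisor'' is precisely the primariness being proved, so it cannot be waved through. The claim is true and your proof can be repaired using the multigrading of Proposition~\ref{prop:multi-graded}: since $\varTheta_j$ contains exactly one variable per block, each multidegree of $\K[x(\boldsymbol i)\mid \boldsymbol i\in\varTheta_j]$ contains a unique monomial, so comparing the extremal multigraded components of $f_0$ and of $h$ with respect to a term order on multidegrees reduces the assertion to injectivity of multiplication by a single cell monomial, which is Corollary~\ref{cor:regular}. That extra step is, in substance, the same multi-homogeneity input the paper feeds into the Eisenbud--Sturmfels machinery. (A minor point: the theorem only asserts a primary decomposition; your closing claim of irredundancy needs more than distinctness of the $\mathfrak{h}_j$.)
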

\begin{proof}
  Theorem~8.1 in \cite{eisenbud-sturmfels} shows that the binomial
  part of every associated prime of $\mathfrak{q}_{\lambda,j}$ is a
  prime lattice ideal associated to the elimination ideal
\begin{equation}
\label{eq:elimination}
\left(\mathfrak{q}_{\lambda,j} : x(\boldsymbol a)  \right) \cap \K\left[x(\boldsymbol i) \ \left| \ \boldsymbol i \in \varTheta_j\right. \right],
\end{equation}
for some
$\boldsymbol a \in \Lambda \setminus (\varTheta_j \cup \Lambda_{j,1}
\cup \Lambda_{j,2})$.
Since $I_\lambda$ and $I_j$ are multi-homogeneous, so is
$\mathfrak{q}_{\lambda,j}$ and hence
$\mathfrak{q}_{\lambda,j} : x(\boldsymbol a)$.  This implies that the
ideal~(\ref{eq:elimination}) is multi-homogeneous in
$\K[x(\boldsymbol i) \ | \ \boldsymbol i \in \varTheta_j$], with its
multi-grading inherited from~$R$.
Now, every multi-homogeneous ideal in
$\K[x(\boldsymbol i) \ | \ \boldsymbol i \in \varTheta_j]$ is
monomial, because there is only one variable in each multi-degree, and
the multi-degrees are linearly independent.  Consequently the binomial
parts of all associated primes of $\mathfrak{q}_{\lambda,j}$ are zero
and thus $\mathfrak{h}_{j}$ is its only associated prime.
\end{proof}
\begin{remark}
  In general binomial primary decomposition is sensitive to the
  characteristic of~$\K$.  For example, $\langle x^{p}-1\rangle$ is
  primary in characteristic $p$, but factors in all other
  characteristics.  The primary decomposition in
  Theorem~\ref{thm:primary} is valid in every charactersistic and
  without an algebraically closedness assumption because all appearing
  characters of associated primes are zero. In this case
  \cite[Corollary~2.2]{eisenbud-sturmfels} applies even without the
  algebraically closedness assumption.
\end{remark}
\begin{remark}
  The primary decomposition in Theorem~4.22 is also a mesoprimary
  decomposition according to \cite[Definition~13.1]{KM2014}.  All
  occurring ideals are mesoprimary since they are primary over~$\CC$
  by \cite[Corollary~10.7]{KM2014}.  The decomposition itself is
  mesoprimary since all occurring associated mesoprimes are equal to
  one of the $\mathfrak{h}_j$ by the multi-homogeneity argument in the
  proof of Theorem~\ref{thm:primary}.  However, the decomposition is
  not a combinatorial mesoprimary decomposition, essentially because
  the intersection of the monomial parts of the components are not
  aligned.
\end{remark}
The following example illustrates how the information in the primary
decomposition is sufficient to reconstruct the Jordan structure of a
matrix with a single eigenvalue.
\begin{example}
  Let $A \in \K^{17 \times 17}$ with a single eigenvalue
  $\lambda \in \K$, and
%
  suppose that the primary decomposition of
  $I _A= \bigcap_{i=1}^3 I_i$ has three components.
  Suppose further that the Hilbert functions of the components satisfy
\begin{equation}
\label{eq:hilbertFunction}
H_{R/I_i}(t) = 
\left\{
\begin{array}{ll}
4(t+1) & \mbox{$i=1$} \\
3 {t+2 \choose 2} & \mbox{$i=2$} \\
2 {t+5 \choose 5} & \mbox{$i=3$.}
\end{array} 
\right. 
\end{equation}
Then, by Proposition~\ref{prop:hilbertFunction}, it follows that the
Jordan block decomposition
$J_\lambda = \bigoplus_{i=1}^\ell k_i J_{\lambda,r_i}$ has $\ell = 3$,
$r_1 = 4$, $r_2 = 3$, and $r_3= 2$.
Additionally (\ref{eq:hilbertFunction}) yields the following linear
relations among $k_1$, $k_2$, and $k_3$:
\[
\left\{ 
\begin{array}{ccccccl}
k_1 & & & & & = & 2 \\
k_1& + & k_2 & & & = & 3 \\
k_1& + & k_2 & + & k_3 & = & 6,  
\end{array}
\right.
\]
from which it follows that $k_1 = 2$, $k_2 = 1$, and $k_3 = 3$. This means that $J_\lambda$ consists of two Jordan blocks of size $4$, one Jordan block of size $3$, and three Jordan blocks of size $2$. 
\end{example}
The following corollary explains when a square matrix is
diagonalizable from a new commutative algebra point of view.
\begin{corollary}
\label{cor:diagonalizability}
Let $A \in \K^{r \times r}$ have a single eigenvalue, which lies
in~$\K$. Then $A$ is diagonalizable if and only if $I_A$ is radical.
\end{corollary}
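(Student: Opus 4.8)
The plan is to reduce to the case of a Jordan matrix and then read off the answer from the combinatorics already established in this section. Since $A$ has a single eigenvalue $\lambda \in \K$, its characteristic polynomial splits over $\K$, so $A$ is similar to a Jordan matrix $J_\lambda = \bigoplus_{i=1}^\ell k_i J_{\lambda,r_i}$ with $r_1 > \cdots > r_\ell$. By Proposition~\ref{prop:similar}, $I_A$ and $I_\lambda$ differ only by a linear change of coordinates on $\P^{r-1}$; since being radical is preserved by such an automorphism (an isomorphism of the polynomial ring), $I_A$ is radical if and only if $I_\lambda$ is radical. Likewise, $A$ is diagonalizable if and only if $J_\lambda$ is, which happens if and only if every Jordan block has size $1$, i.e., $\ell = 1$ and $r_1 = 1$.

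Next I would use the primary decomposition $I_\lambda = \bigcap_{j=1}^\ell \mathfrak{q}_{\lambda,j}$ of Proposition~\ref{prop:decomposition}, which is a primary decomposition by Theorem~\ref{thm:primary}, together with the radical computation of Corollary~\ref{cor:radical}: $\sqrt{\mathfrak{q}_{\lambda,j}} = \mathfrak{h}_j = \langle x(\boldsymbol i) \mid \boldsymbol i \in \Lambda \setminus \varTheta_j\rangle$. For the ``if'' direction, suppose $\ell = 1$ and $r_1 = 1$. Then $I_\lambda = \langle 0 \rangle$, which is radical (indeed prime) since $R$ is a domain, so $I_A$ is radical. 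For the converse, suppose $A$ is not diagonalizable, so that $r_1 \geq 2$. Then, by Proposition~\ref{prop:hilbertFunction}, the component $\mathfrak{q}_{\lambda,1}$ has degree $r_1 \geq 2$, whereas its radical $\mathfrak{h}_1$ is a prime ideal generated by variables, so $R/\mathfrak{h}_1$ is a polynomial ring, which cut out a linear subspace of degree $1$. Hence $\mathfrak{q}_{\lambda,1} \neq \sqrt{\mathfrak{q}_{\lambda,1}}$, so $\mathfrak{q}_{\lambda,1}$ is not radical; equivalently there is an element $x(\boldsymbol a)$ with $x(\boldsymbol a) \in \sqrt{\mathfrak{q}_{\lambda,1}} \setminus \mathfrak{q}_{\lambda,1}$. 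Since $\mathfrak{h}_1 = \sqrt{\mathfrak{q}_{\lambda,1}}$ is the unique associated prime of $\mathfrak{q}_{\lambda,1}$ and is minimal over $I_\lambda$ (the $\mathfrak{h}_j$ being the minimal primes), no other component can ``correct'' this: explicitly, one has $\sqrt{I_\lambda} = \bigcap_j \mathfrak{h}_j \supsetneq I_\lambda$ because the first intersectand contains $x(\boldsymbol a)$ while $I_\lambda \subseteq \mathfrak{q}_{\lambda,1}$ does not. Therefore $I_\lambda$ is not radical, and neither is $I_A$.

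The one place that needs a little care — and which I expect to be the main (minor) obstacle — is pinning down a concrete witness element $x(\boldsymbol a) \in \mathfrak{h}_\ell \setminus I_\lambda$ when $r_1 \geq 2$, so that the argument does not merely appeal to ``degree $\neq 1$'' but produces an explicit non-nilpotent-but-radical obstruction; the computations sketched in the proof of Proposition~\ref{prop:radical} (namely $x(\boldsymbol i)^2 \in I_\lambda$ when $i_3 \geq \frac{r_{i_1}+2}{2}$, with $x(\boldsymbol i) \notin I_\lambda$ itself since $x(\boldsymbol i)$ is not in the monomial initial ideal $\mathrm{in}_{>_{\mathrm{grevlex}}}(I_\lambda)$ by Corollary~\ref{cor:initialideal}) supply such a witness: take $\boldsymbol i = (1,1,r_1) \in \Lambda \setminus \varTheta_\ell$, which exists precisely because $r_1 \geq 2$. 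Alternatively, and perhaps more cleanly, one simply invokes the Hilbert-polynomial computation: a radical ideal whose primary decomposition has a component of degree $\geq 2$ supported on a linear space is impossible, since a reduced scheme supported on a linear subspace $L$ equals $L$ and has degree $1$. Either route closes the argument, and I would present the explicit-witness version for self-containedness.
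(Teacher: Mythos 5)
Your proposal is correct and, in the version you say you would actually present (the explicit-witness one), it is essentially the paper's own argument: both reduce to $J_\lambda$ via Proposition~\ref{prop:similar} and then combine Proposition~\ref{prop:radical} ($\sqrt{I_\lambda}=\mathfrak{h}_\ell$) with the observation that $I_\lambda$, being generated by quadrics, contains no linear forms --- you merely state this in contrapositive form by exhibiting $x(1,1,r_1)\in\mathfrak{h}_\ell\setminus I_\lambda$ when $r_1\geq 2$. The detour through the primary decomposition and the degree of $\mathfrak{q}_{\lambda,1}$ from Proposition~\ref{prop:hilbertFunction} is valid but unnecessary given Proposition~\ref{prop:radical}.
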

\begin{proof}
Let $A$ have eigenvalue $\lambda$ and Jordan canonical form~$J_\lambda  = \bigoplus_{i=1}^\ell k_i J_{\lambda,r_i}$.
Then $A$ is diagonalizable if and only if the Jordan blocks in $J_\lambda$ are all of size $1$. So, by Proposition~\ref{prop:similar}, it is sufficient to show that the latter condition is equivalent to radicality of~$I_\lambda$. 

If the Jordan blocks in $J_\lambda$ are all of size $1$, then $I_\lambda$ is the zero ideal and hence it is radical.

Now assume that $I_\lambda$ is radical. Proposition~\ref{prop:radical} implies that  $I_{\lambda} = \mathfrak{h}_\ell$. By the definition of $\mathfrak{h}_\ell$, and the fact that $I_{\lambda}$ does not contain any linear polynomials, we must have that $\varTheta_\ell=\Lambda$. Hence all the Jordan blocks have size $1$.
\end{proof}
We close this section by stating the following geometric property of the zero-dimensional eigenscheme of a square matrix with a single eigenvalue. 
\begin{proposition}
\label{prop:curvilinear}
Let $r \geq 2$ and let $A \in \mathbb{K}^{r \times r}$ with a single
eigenvalue, which lies in~$\K$. If the eigenscheme of $A$ is
zero-dimensional, then it is curvilinear.
\end{proposition}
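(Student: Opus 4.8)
The plan is to reduce to a single Jordan block via similarity and then to examine the local ring of the eigenscheme at its unique point.

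First I would invoke Proposition~\ref{prop:similar}: replacing $A$ by a similar matrix alters the eigenscheme only by an automorphism of $\P^{r-1}$, and such an automorphism is an isomorphism of schemes, hence preserves both zero-dimensionality and curvilinearity. Since the single eigenvalue $\lambda$ lies in $\K$, we may therefore assume $A = J_\lambda = \bigoplus_{i=1}^{\ell} k_i J_{\lambda,r_i}$. By Proposition~\ref{prop:radical} the support of the eigenscheme is $V(\mathfrak{h}_\ell)$, the linear subspace of $\P^{r-1}$ cut out by the variables outside $\varTheta_\ell$, so its dimension is $|\varTheta_\ell|-1 = \big(\sum_i k_i\big)-1$ (equivalently this can be read off Propositions~\ref{prop:decomposition} and~\ref{prop:hilbertFunction}). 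Zero-dimensionality thus forces $\sum_i k_i = 1$, i.e.\ $\ell = 1$ and $k_1 = 1$, so $J_\lambda = J_{\lambda,r}$ is a single Jordan block of size $r \ge 2$. Writing $x_k := x(1,1,k)$ for $1 \le k \le r$, we then have $\varTheta_\ell = \{(1,1,1)\}$ and $\mathfrak{h}_\ell = \langle x_2,\dots,x_r\rangle$.

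Next I would localize at the support point. Since $\sqrt{I_\lambda} = \langle x_2,\dots,x_r\rangle$, the eigenscheme $Z$ is concentrated at $p = [1:0:\cdots:0]$, which lies in the affine chart $\{x_1 \ne 0\}$; as $Z$ is a closed subscheme whose support meets only this chart, $Z$ is contained in it, and $\mathcal{O}_{Z,p}$ is the whole (Artinian local) affine coordinate ring $\K[y_2,\dots,y_r]/\widetilde{I_\lambda}$, where $y_k = x_k/x_1$ and $\widetilde{I_\lambda}$ is the dehomogenization of $I_\lambda$, generated by the dehomogenizations of the reduced Gr\"obner basis $G = G_1 \cup G_2$ of Proposition~\ref{prop:groebner}; its maximal ideal is $\mathfrak{m}_p = (y_2,\dots,y_r)$. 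The heart of the argument is a short computation in $\widetilde{I_\lambda}$: for the single block $J_{\lambda,r}$ the pairs $(\boldsymbol i,\boldsymbol j)\in\varGamma_2$ with $\boldsymbol i = (1,1,1)$ are exactly $\big((1,1,1),(1,1,j)\big)$ with $2 \le j \le r-1$, and by Notation~\ref{not:g} the corresponding elements of $G_2$ are
\[
g\big((1,1,1),(1,1,j)\big) \;=\; x_2 x_j - x_1 x_{j+1}, \qquad 2 \le j \le r-1 .
\]
Dehomogenizing gives $y_{j+1} = y_2 y_j$ in $\mathcal{O}_{Z,p}$, and inductively $y_k = y_2^{\,k-1}$ for all $2 \le k \le r$. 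Hence $y_3,\dots,y_r \in \mathfrak{m}_p^2$, so $\mathfrak{m}_p/\mathfrak{m}_p^2$ is spanned over $\K$ by the class of $y_2$ alone; thus $\dim_\K T_pZ \le 1$, and since $p$ is the only point of $Z$ this shows $Z$ is curvilinear. If one prefers, the same relations show $\mathcal{O}_{Z,p}$ is generated as a $\K$-algebra by $y_2$, hence is a local Artinian quotient of $\K[y_2]$ and so isomorphic to $\K[t]/(t^n)$; comparing $\K$-dimensions and using $\deg Z = r$ from Proposition~\ref{prop:hilbertFunction} identifies $n = r$.

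I expect no serious obstacle. The two points needing care are making the reduction to a single Jordan block fully rigorous ---in particular that an automorphism of $\P^{r-1}$ preserves curvilinearity and that zero-dimensionality genuinely collapses the block structure--- and correctly extracting from the Gr\"obner basis precisely the binomials $x_2 x_j - x_1 x_{j+1}$, which upon dehomogenization exhibit every higher coordinate as a power of $y_2$. The index bookkeeping around $\varGamma_2$ and Notation~\ref{not:g} is the only mildly delicate part.
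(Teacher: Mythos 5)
Your proof is correct, and after the shared first step it takes a genuinely different route from the paper's. Both arguments begin identically: use Proposition~\ref{prop:similar} to pass to Jordan form, and observe that zero-dimensionality of the eigenscheme forces a single Jordan block $J_{\lambda,r}$ (the paper phrases this via the eigenspace having dimension $1$; your count $\dim V(\mathfrak{h}_\ell)=\big(\sum_i k_i\big)-1$ via Proposition~\ref{prop:radical} is the same fact). From there the paper argues globally: it notes that $I_\lambda$ is the ideal of $2\times 2$ minors of the $2\times r$ matrix $\left(\begin{smallmatrix} x_1 & \cdots & x_{r-1} & x_r\\ x_2 & \cdots & x_r & 0\end{smallmatrix}\right)$, so that $Z_A$ sits inside the rational normal curve cut out by the minors of the first $r-1$ columns, and concludes by the definition of curvilinear as ``embeddable in a non-singular curve.'' You instead argue locally: you extract from $G_2$ the binomials $x_2x_j-x_1x_{j+1}$, dehomogenize to get $y_k=y_2^{\,k-1}$ in $\mathcal{O}_{Z,p}$, and conclude $\dim_\K \mathfrak{m}_p/\mathfrak{m}_p^2\le 1$, i.e.\ $\mathcal{O}_{Z,p}\cong \K[t]/(t^n)$. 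These are two equivalent characterizations of curvilinearity for a zero-dimensional scheme, and in fact your relations $y_k=y_2^{k-1}$ are precisely the affine equations of the paper's rational normal curve near $p$, so the two proofs are seeing the same geometry from the global and local sides respectively. Your version has the small advantages of not needing the citation to Harris for the rational normal curve and of handling $r=2$ uniformly (the paper dispatches it as a trivial case); the paper's version is shorter and produces the ambient smooth curve explicitly. The only thing I would add to yours is one sentence recording that ``Zariski tangent space of dimension $\le 1$ at every point'' is the characterization of curvilinear being used, since the paper's proof implicitly works with the embedding characterization instead.
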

\begin{proof}
  Let $Z_A$ be the eigenscheme of $A$.  It suffices to show that $Z_A$
  can be embedded in a non-singular curve.  This is trivial if
  $r = 2$, so suppose $r \geq 3$.
  By Proposition~\ref{prop:similar}, we may assume that $A$ is in
  Jordan canonical form. Since the eigenspace of $A$ has dimension
  $1$, there is only a single Jordan block. An easy calculation shows
  that the ideal of $Z_A$ is generated by the $2 \times 2$ minors of
  the $2 \times r$ matrix $\left(
  \begin{smallmatrix}
    x_{0} & x_1 & \dots & x_{r-2} & x_{r-1} \\
    x_{1} & x_{2} & \dots & x_{r-1} & 0
  \end{smallmatrix}
\right)$.
The $2 \times 2$ minors of the submatrix consisting of the first $r-1$
columns generate the ideal of a rational normal curve
in~$\mathbb{P}^{r-1}$~\cite[Example~1.16]{harris}.  Thus this rational
normal curve contains $Z_A$ as a subscheme.
\end{proof}
\section{Ideals of general Jordan matrices}
\label{sec:jordan}
Let $J$ be a Jordan matrix with at least two distinct eigenvalues.  In
this section we find a primary decomposition of the ideal of~$J$. We
begin with the case that $J$ has exactly two distinct eigenvalues.
The general case is obtained by induction.

Let $A \in \K^{r \times r}$ and let $B \in \K^{s \times s}$. Let $R = \K[x_1, \dots, x_r, y_1, \dots, y_s]$ be the bi-graded polynomial ring with bi-grading given as  follows:  
\[
\left\{
\begin{array}{ll}
\deg(x_i) = (1,0) & \mbox{for $i \in \{1, \dots, r\},$}  \\
\deg(y_j) = (0,1) & \mbox{for $j \in \{1, \dots, s\}$}.  
\end{array}
\right.
\]
The bi-graded piece of $R$ in bi-degree $(1,1)$ is denoted by $R_{(1,1)}$. Let $\boldsymbol{x}$ and $\boldsymbol{y}$ be the vectors of variables $x_1, \dots, x_r$ and $y_1, \dots,y_s$ respectively.  We denote the sets of entries $A\boldsymbol{x} \wedge \boldsymbol{x}$, $B\boldsymbol{y} \wedge \boldsymbol{y}$, and 
\[
(A \oplus B) \left(\begin{array}{c} \boldsymbol x \\ \boldsymbol y\end{array}\right) \wedge \left(\begin{array}{c} \boldsymbol x \\ \boldsymbol y\end{array}\right) 
\]
by $L_A$, $L_B$, and $L_{A \oplus B}$ respectively. Let $I_{A \oplus B} = \langle L_{A \oplus B} \rangle$. Recall that $A[\alpha,:]$ denotes the $\alpha$-th row of $A$. By definition of $L_{A \oplus B}$, we get
\begin{equation}
\label{eq:idealEquality}
I_{A \oplus B} = \langle L_A \rangle + \langle L_B \rangle +  \left\langle \left. 
 A[\alpha, :]\boldsymbol x \, y_\beta-B[\beta, :]\boldsymbol y \, x_\alpha
  \ 
  \right| 
  \
  (\alpha, \beta) \in \varOmega
\right\rangle, 
\end{equation}
where $\varOmega = \{(i,j) \ | \ 1 \leq i \leq r, 1 \leq j \leq
s\}$. Put $\varOmega$ in the ordering defined by
$(i_1,j_1)>(i_2,j_2)$ when $i_1<i_2$ or $i_1=i_2$ and $j_1<j_2$. Let $T = \{x_iy_j \ | \ (i,j) \in \varOmega\}$, and note that this is a basis for $R_{(1,1)}$. Let $\varPhi$ be the $|\varOmega| \times
|\varOmega|$ matrix whose $(\alpha, \beta)$-th row is the
coordinate vector of
\[
A[\alpha, :]\boldsymbol x \, y_\beta-B[\beta, :]\boldsymbol y \, x_\alpha
\] 
with respect to $T$, i.e., the $((\alpha,\beta),(i,j))$-entry of $\varPhi$ is 
\begin{equation}
\label{eq:Phi}
\varPhi[(\alpha,\beta), (i,j)] = 
\left\{ 
\begin{array}{ll}
A[\alpha,\alpha]-B[\beta,\beta] & \mbox{if $(\alpha,\beta) = (i,j)$,} \\
-B[\beta,j] & \mbox{if $i = \alpha$ and $j \not= \beta$,} \\
A[\alpha, i] & \mbox{if $i \not=\alpha$ and $ j=\beta$,} \\
0 & \mbox{otherwise.}
\end{array}
\right. 
\end{equation}
\begin{example}
\label{ex:direct-sum}
Consider the following two matrices: 
\[
A = 
\left(
\begin{array}{cc}
-1 & 4 \\
-1 & 3 
\end{array}
\right) \ \mbox{and} \ 
B = 
\left(
\begin{array}{cc}
-7 & 9 \\
-4 & 5 
\end{array}
\right). 
\] 
Let $R = \K[x_1,x_2,y_1,y_2]$, let $\boldsymbol x = \left(\begin{array}{rr} x_1 & x_2\end{array}\right)^T$, and let $\boldsymbol y = \left(\begin{array}{rr} y_1 & y_2\end{array}\right)^T$.  Then 
\[
\left| \begin{array}{ll} 
  A[\alpha, :] \boldsymbol{x} & x_\alpha \\
  B[\beta, :] \boldsymbol{y} & y_\beta
  \end{array}
  \right|
  = \left\{
  \begin{array}{ll}
  6x_1y_1-9x_1y_2 +4x_2y_1 & \mbox{if $(\alpha,\beta) = (1,1)$,} \\
  4x_1y_1-6x_1y_2+4x_2y_2 & \mbox{if $(\alpha,\beta) = (1,2)$,} \\
  -x_1y_1+10x_2y_1-9x_2y_2 & \mbox{if $(\alpha,\beta) = (2,1)$,} \\
  -x_1y_2+4x_2y_1-2x_2y_2 & \mbox{if $(\alpha,\beta) = (2,2)$.} 
  \end{array}
  \right.
\]
Therefore, we obtain 
\[
\varPhi = 
\left(
\begin{array}{rrrr}
6 & -9 & 4 & 0 \\
4 & -6 & 0 & 4 \\
-1 & 0 & 10 & -9 \\
0 & -1 & 4 & -2 
\end{array}
\right). 
\]
Since $\det \varPhi = 16$, the rank of $\varPhi$ is $4$. 
\end{example}
\begin{proposition}
\label{prop:induction}
If $\rank (\varPhi) = rs$, then  
$
I_{A \oplus B} = \langle L_A, y_1, \dots, y_s \rangle \cap \langle L_B, x_1, \dots, x_r\rangle.
$ 
\end{proposition}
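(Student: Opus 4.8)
The plan is to prove the two inclusions separately, using the explicit generating set~(\ref{eq:idealEquality}) of $I_{A \oplus B}$ on one side and a dimension/rank argument on the other.

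For the inclusion $I_{A \oplus B} \subseteq \langle L_A, y_1, \dots, y_s \rangle \cap \langle L_B, x_1, \dots, x_r\rangle$, I would simply check that each generator listed in~(\ref{eq:idealEquality}) lies in both ideals on the right. The elements of $L_A$ obviously lie in $\langle L_A, \boldsymbol y\rangle$, and they lie in $\langle L_B, \boldsymbol x\rangle$ because every element of $L_A = \{A\boldsymbol x \wedge \boldsymbol x\}$ is a quadratic form in the $x_i$ alone, hence a combination of the $x_\alpha$'s; symmetrically for $L_B$. The mixed generators $A[\alpha,:]\boldsymbol x\, y_\beta - B[\beta,:]\boldsymbol y\, x_\alpha$ have one term divisible by some $y_\beta$ and the other divisible by some $x_\alpha$, so each such generator lies in $\langle \boldsymbol y\rangle \subseteq \langle L_A,\boldsymbol y\rangle$ and in $\langle \boldsymbol x\rangle \subseteq \langle L_B,\boldsymbol x\rangle$. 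This direction does not use the rank hypothesis.

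For the reverse inclusion I would argue in each bi-degree, exploiting bi-homogeneity of all ideals involved. Write $\mathfrak{a} = \langle L_A,\boldsymbol y\rangle$ and $\mathfrak{b} = \langle L_B,\boldsymbol x\rangle$. The key observation is that in bi-degree $(1,1)$ the ideal $I_{A\oplus B}$ contains exactly the $R_{(1,1)}$-span of the rows of $\varPhi$; if $\rank(\varPhi) = rs = \dim R_{(1,1)}$, then $(I_{A\oplus B})_{(1,1)} = R_{(1,1)}$, i.e.\ $I_{A\oplus B}$ contains every monomial $x_iy_j$. Since $\mathfrak{a}\cap\mathfrak{b}$ is generated by $L_A$, $L_B$ (lying in bi-degrees $(2,0)$ and $(0,2)$), and bilinear elements in bi-degree $(1,1)$ (the products $x_iy_j$, which lie in both $\langle\boldsymbol y\rangle$ and $\langle\boldsymbol x\rangle$, together with the images of $L_A\cdot\boldsymbol y$ and $L_B\cdot\boldsymbol x$ which are redundant once all $x_iy_j$ are available), one checks that $\mathfrak{a}\cap\mathfrak{b} = \langle L_A, L_B, x_iy_j : (i,j)\in\varOmega\rangle$. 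Each of these generators lies in $I_{A\oplus B}$: the $L_A$ and $L_B$ parts are generators of $I_{A\oplus B}$ directly, and every $x_iy_j$ lies in $(I_{A\oplus B})_{(1,1)} = R_{(1,1)}$ by the rank hypothesis. Hence $\mathfrak{a}\cap\mathfrak{b}\subseteq I_{A\oplus B}$.

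The main obstacle I anticipate is establishing the precise equality $\mathfrak{a}\cap\mathfrak{b} = \langle L_A, L_B\rangle + \langle x_iy_j : (i,j)\in\varOmega\rangle$ cleanly — that is, showing the intersection of the two "augmented" ideals has no generators beyond the bilinear monomials and the two pure-degree pieces. I would handle this by a bi-degree-by-bi-degree comparison: an element of $\mathfrak{a}\cap\mathfrak{b}$ in bi-degree $(d_1,d_2)$ with $d_1,d_2\geq 1$ must be divisible (modulo $L_A, L_B$) by some $x_i$ and also lie in $\langle\boldsymbol y\rangle$, forcing it into the monomial ideal $\langle x_iy_j\rangle$; in bi-degrees $(d,0)$ one is reduced to $\langle L_A\rangle$ intersected with the whole ring (since $\mathfrak{b}\supseteq\langle\boldsymbol x\rangle\ni$ everything of positive $x$-degree), giving just $\langle L_A\rangle$-content, and symmetrically for $(0,d)$. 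Once this structural description of $\mathfrak{a}\cap\mathfrak{b}$ is in hand, both inclusions follow as above, and the only place the hypothesis $\rank(\varPhi)=rs$ enters is in guaranteeing $(I_{A\oplus B})_{(1,1)}$ is all of $R_{(1,1)}$.
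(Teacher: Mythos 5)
Your proposal is correct and follows essentially the same route as the paper: both reduce the statement to the identity $\langle L_A,\boldsymbol y\rangle\cap\langle L_B,\boldsymbol x\rangle=\langle L_A\rangle+\langle L_B\rangle+\langle x_iy_j \mid (i,j)\in\varOmega\rangle$ and then use the rank hypothesis to conclude that the mixed generators of $I_{A\oplus B}$ span all of $R_{(1,1)}$, hence generate the same ideal as the monomials $x_iy_j$. The only difference is cosmetic: you verify the intersection identity bi-degree by bi-degree (where it is in fact immediate, since both ideals contain every element of bi-degree $(d_1,d_2)$ with $d_1,d_2\geq 1$), whereas the paper uses the modular law together with an element-chasing argument.
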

\begin{proof}
%
First we show the following set equality: 
\begin{equation}
\label{eq:setEquality1}
 \langle L_A, y_1, \dots, y_s \rangle \cap \langle L_B, x_1, \dots, x_r\rangle =  \langle L_A \rangle + \langle L_B \rangle + \left\langle \left. x_iy_j \ \right| \ (i,j) \in \varOmega \right\rangle. 
\end{equation}
To prove (\ref{eq:setEquality1}), it is equivalent to show that 
\begin{equation}
\label{eq:setEquality2}
\langle L_A, y_1, \dots, y_s \rangle \cap \langle L_B, x_1, \dots, x_r\rangle  
=  \langle L_A \rangle  + \langle L_B \rangle + \langle x_1, \dots, x_r \rangle \cap \langle y_1, \dots, y_s \rangle. 
\end{equation}

The containment ``$\supseteq$'' follows immediately from the modular law for ideals plus the fact that $\langle L_A \rangle \subseteq \langle x_1, \dots, x_r \rangle$ and $\langle L_B \rangle \subseteq \langle y_1, \dots, y_s \rangle$.   

For the other containment let $f  \in  \langle L_A, y_1, \dots, y_s \rangle \cap \langle L_B, x_1, \dots, x_r\rangle$ be arbitrary. Then, as we saw in the proof of Proposition~\ref{prop:decomposition},  there exist a $g \in \langle L_A \rangle$, an $h \in \langle y_1, \dots, y_s \rangle$, a $g' \in \langle L_B \rangle$, and an $h' \in  \langle x_1, \dots, x_r\rangle$ such that $f = g+h = g'+h'$. Since $\langle L_A \rangle \subseteq \langle x_1, \dots, x_r \rangle$ and $\langle L_B \rangle \subseteq \langle y_1, \dots, y_s \rangle$, we have 
\[
g-h' = g'-h \in   \langle x_1, \dots, x_r \rangle \cap  \langle y_1, \dots, y_s \rangle, 
\]
which means that there exists a $p \in  \langle x_1, \dots, x_r \rangle \cap  \langle y_1, \dots, y_s \rangle$ such that $g'-h = p$. Thus 
\[
h = g' - p \in \langle L_B \rangle +   \langle x_1, \dots, x_r \rangle \cap  \langle y_1, \dots, y_s \rangle.
\] 
Therefore, 
\[
f  = g+h \in \langle L_A \rangle + \langle L_B \rangle +   \langle x_1, \dots, x_r \rangle \cap  \langle y_1, \dots, y_s \rangle, 
\] 
which proves set equality~(\ref{eq:setEquality2}). 

Recall that $T$ is a basis for $R_{(1,1)}$. 
By assumption, 
\[ 
 \left\{ \left. 
A[\alpha, :]\boldsymbol x \, y_\beta-B[\beta, :]\boldsymbol y \, x_\alpha
  \ 
  \right| 
  \
  (\alpha, \beta) \in \varOmega
\right\}
\]
is also a basis for $R_{(1,1)}$, and hence 
\[
\left\langle \left. x_iy_j \ \right| \ (i,j) \in \varOmega \right\rangle \ \mbox{and} \ 
\left\langle \left. 
A[\alpha, :]\boldsymbol x \, y_\beta-B[\beta, :]\boldsymbol y \, x_\alpha
  \ 
  \right| 
  \
  (\alpha, \beta) \in \varOmega
\right\rangle
\] 
are the same ideal. Therefore, the proposition follows from~(\ref{eq:idealEquality}) and~(\ref{eq:setEquality1}).  
\end{proof}
\begin{example}
Let $A$, $B$, $R$, $\boldsymbol x$, $\boldsymbol y$, and $\varPhi$ be as given in Example~\ref{ex:direct-sum}. Since $\rank \varPhi = 2 \cdot 2 = 4$, it follows from Proposition~\ref{prop:induction} that $I_{A \oplus B}$ can be written as follows: 
\begin{equation}
\label{eq:primary-decomposition-example}
\left\langle 
(-x_1+2x_2)^2 , y_1, y_2 
\right\rangle 
\cap 
\left\langle 
(2y_1-3y_2)^2, x_1, x_2
\right\rangle,  
\end{equation}
where 
\begin{eqnarray*}
\det (A \boldsymbol x \ | \ \boldsymbol x) &  = & (-x_1+2x_2)^2, \\
\det (B \boldsymbol x \ | \ \boldsymbol x)  & = & (2y_1-3y_2)^2.
\end{eqnarray*} 

It is not hard to show that (\ref{eq:primary-decomposition-example}) is a primary decomposition of $I_{A \oplus B}$. This means that the eigenscheme of $A \oplus B$ is the union of two primary components, both of which are of dimension $0$ and degree $2$. The support of this eigenscheme is the union of two points of $\P^3$ defined by $-x_1+2x_2=y_1=y_2=0$ and $2y_1-3y_2=x_1=x_2=0$. 
\end{example}
Let $\lambda, \mu \in \K$ be distinct and let $J_\lambda$ and $J_\mu$ be Jordan matrices with eigenvalues $\lambda$ and $\mu$ respectively. For simplicity, $I_{\lambda, \mu}$ denotes $I_{J_\lambda \oplus J_\mu}$. Since the Jordan canonical form of a matrix is unique up to the order of the Jordan blocks, we may assume that there exist $m,n \in \N$, $(k_1, \dots, k_m) \in \N^m$, $(k_1', \dots, k_n') \in \N^n$, and strictly decreasing sequences of positive integers $(r_1,\dots,  r_m)$ and $(r_1', \dots, r_n')$ such that 
\[
J_\lambda = \bigoplus_{i=1}^m k_i J_{\lambda,r_i} \ 
\mbox{and} \ \ 
J_\mu = \bigoplus_{i=1}^n k_i' J_{\mu,r_i'}. 
\] 

Let $\xi_\lambda = \sum_{i=1}^m k_ir_i$, let $\xi_\mu =  \sum_{i=1}^n k_i'r_i'$, and let $R = \K[x_1, \dots,x_{\xi_\lambda}, y_1, \dots, y_{\xi_\mu}]$.  Consider the vector $\boldsymbol x$ of variables $x_1, \dots, x_{\xi_\lambda}$ and the vector $\boldsymbol y$ of variables $y_1, \dots, y_{\xi_\mu}$. Write $L_\lambda$ for the set of entries of $J_\lambda \boldsymbol x \wedge \boldsymbol x$ and $L_\mu$ for those of~$J_\mu \boldsymbol y \wedge \boldsymbol y$. 
\begin{proposition}
\label{prop:initial}
Let $\lambda, \mu \in \K$ be distinct. Then
\[
I_{J_\lambda \oplus J_\mu} = \langle L_\lambda, y_1, \dots, y_{\xi_\mu}\rangle \cap 
\langle L_\mu, x_1, \dots, x_{\xi_\lambda}\rangle.
\] 
\end{proposition}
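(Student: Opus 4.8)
The plan is to obtain this as a direct application of Proposition~\ref{prop:induction} with $A = J_\lambda$, $B = J_\mu$, $r = \xi_\lambda$, and $s = \xi_\mu$: in that notation $L_A = L_\lambda$ and $L_B = L_\mu$, so the right-hand side of Proposition~\ref{prop:induction} is literally $\langle L_\lambda, y_1,\dots,y_{\xi_\mu}\rangle \cap \langle L_\mu, x_1,\dots,x_{\xi_\lambda}\rangle$. Hence the entire task reduces to checking the hypothesis of that proposition, namely that the $(\xi_\lambda\xi_\mu)\times(\xi_\lambda\xi_\mu)$ matrix $\varPhi$ attached to $J_\lambda$ and $J_\mu$ by~(\ref{eq:Phi}) has full rank, i.e.\ is invertible.

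The key point is to recognize $\varPhi$ as the matrix of a Sylvester-type operator. Identifying $\K^{\varOmega}$ with the space $\K^{\xi_\lambda\times\xi_\mu}$ of matrices $V$ via $V[\alpha,\beta] = v(\alpha,\beta)$, I would verify against~(\ref{eq:Phi}) that $\varPhi$ represents the endomorphism
\[
\sigma\colon V \longmapsto J_\lambda\, V - V\, J_\mu^{T}:
\]
the diagonal contribution $A[\alpha,\alpha]-B[\beta,\beta] = \lambda-\mu$, the entries $-B[\beta,j]$ coming from a fixed row $\alpha$, and the entries $A[\alpha,i]$ coming from a fixed column $\beta$ assemble precisely into $(J_\lambda V)[\alpha,\beta] - (V J_\mu^{T})[\alpha,\beta]$. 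This index chase is the one genuinely computational step; the transpose on $J_\mu$ is an artifact of the chosen identification and plays no role below.

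With this in hand the invertibility of $\sigma$ is immediate. Write $J_\lambda = \lambda I + N$ and $J_\mu^{T} = \mu I + N'$, where $N$ and $N'$ are nilpotent (each a direct sum of nilpotent Jordan blocks or their transposes). Then $\sigma = (\lambda-\mu)\,\mathrm{id} + \tau$ with $\tau\colon V \mapsto N V - V N'$. Left multiplication by $N$ and right multiplication by $N'$ are commuting operators on $\K^{\xi_\lambda\times\xi_\mu}$, and both are nilpotent, so their difference $\tau$ is nilpotent. Since $\lambda\neq\mu$, the scalar $\lambda-\mu$ is a unit of $\K$, so $\sigma$ is $(\text{unit})\cdot\mathrm{id} + (\text{nilpotent})$ and therefore invertible; indeed $\det\varPhi = (\lambda-\mu)^{\xi_\lambda\xi_\mu}\neq 0$. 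Thus $\rank(\varPhi) = \xi_\lambda\xi_\mu$, and Proposition~\ref{prop:induction} yields the stated decomposition.

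I expect the only real obstacle to be the bookkeeping that identifies $\varPhi$ with $\sigma$, and even that is routine once the dictionary between $\varOmega$-indexed vectors and $\xi_\lambda\times\xi_\mu$ matrices is fixed; everything afterward is formal. As an alternative one could avoid the operator language and argue directly that $\varPhi - (\lambda-\mu)I$ is nilpotent: assign to $(\alpha,\beta)$ the weight $w(\alpha,\beta)$ equal to the sum of the distances from row $\alpha$ and from column $\beta$ to the bottom of their respective Jordan blocks, observe that every nonzero off-diagonal entry of $\varPhi$ connects indices with strictly smaller weight, and conclude that a sufficiently high power of $\varPhi - (\lambda-\mu)I$ vanishes. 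The Sylvester-operator formulation, however, seems the cleanest.
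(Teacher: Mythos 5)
Your proof is correct, and for the one substantive step --- showing that $\varPhi$ has full rank --- it takes a genuinely different route from the paper. The paper works out the $2\times 2$ minors of $(J_\lambda\oplus J_\mu)\left(\begin{smallmatrix}\boldsymbol x\\ \boldsymbol y\end{smallmatrix}\right)\wedge\left(\begin{smallmatrix}\boldsymbol x\\ \boldsymbol y\end{smallmatrix}\right)$ explicitly in terms of the sets $\Pi_\lambda,\Pi_\mu$ of last-row indices of Jordan blocks, reads off every entry of $\varPhi$, and observes that under the chosen ordering of $\varOmega$ the matrix is upper triangular with $\lambda-\mu$ on the whole diagonal, whence $\det\varPhi=(\lambda-\mu)^{\xi_\lambda\xi_\mu}$. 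You instead recognize $\varPhi$ as the Sylvester operator $V\mapsto J_\lambda V - V J_\mu^{T}$ on $\K^{\xi_\lambda\times\xi_\mu}$ (your index check of~(\ref{eq:Phi}) is right: the row $(\alpha,\beta)$ of $\varPhi$ applied to $V$ gives $(J_\lambda V)[\alpha,\beta]-(VJ_\mu^{T})[\alpha,\beta]$), split off the scalar part, and use that the difference of the two commuting nilpotent multiplication operators is nilpotent, so $\sigma=(\lambda-\mu)\,\mathrm{id}+\mathrm{nilpotent}$ is invertible with the same determinant. Both arguments are elementary and field-independent; yours trades the paper's explicit entry-by-entry bookkeeping (which the paper reuses almost verbatim in the induction step of Theorem~\ref{thm:decomposition}) for a structural statement that makes the appearance of $(\lambda-\mu)^{\xi_\lambda\xi_\mu}$ transparent and would generalize immediately to any two matrices each of the form scalar plus nilpotent with distinct scalars. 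The weight-function alternative you sketch at the end is essentially the paper's triangularity argument recast as a nilpotency statement, so the two proofs are close in spirit even where they differ in presentation.
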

\begin{proof}
Define 
\[
 \Pi_\lambda= \bigcup_{i=1}^m \left\{cr_i \ | \ 1 \leq c \leq k_i\right\}
\ 
\mbox{and} \  \
 \Pi_\mu = \bigcup_{i=1}^n \left\{cr_i' \ | \ 1 \leq c \leq k_i'\right\}. 
\]
Then 
\begin{equation}
\label{eq:entries}
\left|
\begin{array}{ll}
J_\lambda \boldsymbol x[\alpha,:] & x_\alpha \\
J_\mu \boldsymbol y[\beta,:] & y_\beta
\end{array}
\right|
= 
\left\{
\begin{array}{ll}
(\lambda-\mu) x_\alpha y_\beta - x_\alpha y_{\beta+1} + x_{\alpha+1}y_\beta & \mbox{if $\alpha \not\in \Pi_\lambda$ and $\beta \not\in \Pi_\mu$}, \\
(\lambda-\mu) x_\alpha y_\beta - x_\alpha y_{\beta+1} & \mbox{if $\alpha \in \Pi_\lambda$ and $\beta \not\in \Pi_\mu$}, \\
(\lambda-\mu) x_\alpha y_\beta+ x_{\alpha+1}y_\beta & \mbox{if $\alpha \not\in \Pi_\lambda$ and $\beta \in \Pi_\mu$}, \\
0 & \mbox{otherwise.} 
\end{array}
\right. 
\end{equation}
Therefore, the $((\alpha,\beta),(i,j))$-entry of the $\xi_\lambda \xi_\mu \times \xi_\lambda \xi_\mu$ matrix~(\ref{eq:Phi}) for $A = J_\lambda$ and $B = J_\mu$ is 
\begin{equation}
\label{eq:Phigeneral}
\varPhi[(\alpha,\beta),(i,j)] = 
\left\{
\begin{array}{ll}
\lambda-\mu & \mbox{if $(\alpha,\beta)=(i,j)$,} \\
-1 & \mbox{if $\alpha  \in \Pi_\lambda$, $\beta \not\in \Pi_\mu$ and $(\alpha, \beta+1) = (i,j)$,} \\
-1 & \mbox{if $\alpha  \not\in \Pi_\lambda$, $\beta\not\in \Pi_\mu$ and $(\alpha, \beta+1)= (i,j)$,} \\
1 & \mbox{if $\alpha  \not\in \Pi_\lambda$, $\beta \in \Pi_\mu$ and $(\alpha+1, \beta)= (i,j)$,} \\
1 &  \mbox{if $\alpha  \not\in \Pi_\lambda$, $\beta\not\in \Pi_\mu$ and $(\alpha+1, \beta)=(i,j)$,} \\
0 & \mbox{otherwise} 
\end{array}
\right. 
\end{equation}
by~(\ref{eq:entries}). Since $(\alpha,\beta) > (\alpha,\beta+1), (\alpha+1,\beta)$, if $(i,j) >(\alpha,\beta)$, then $\varPhi[(\alpha,\beta),(i,j)]=0$, and hence $\varPhi$ is an upper triangular matrix whose entries on the main diagonal are all $\lambda-\mu$. In particular, the determinant of $\varPhi$ is $(\lambda-\mu)^{ \xi_\lambda \xi_\mu}$, and thus it is non-zero, because $\lambda \not=\mu$ by assumption. As a result, $\rank (\varPhi) = \xi_\lambda \xi_\mu$. Therefore, the desired equality follows from Proposition~\ref{prop:induction}. 
\end{proof}
Let $n \in \N$ and let $\lambda_1, \dots, \lambda_n \in \K$ be pairwise distinct.  For each $i \in \{1, \dots, n\}$, consider a Jordan matrix $J_{\lambda_i}$ with eigenvalue $\lambda_i$. Then, for each such $i$, there exist an $\ell_i \in \N$, a $(k_1^{(i)},\dots, k_{\ell_i}^{(i)}) \in \N^{\ell_i}$, and a $(r_1^{(i)}, \dots, r_{\ell_i}^{(i)}) \in \N^{\ell_i}$ with $r_1^{(i)} > \cdots > r_{\ell_i}^{(i)}$, so that $J_{\lambda_i}$ can be identified with $\bigoplus_{j=1}^{\ell_i} k_j^{(i)} J_{\lambda_i, r_j^{(i)}}$ after a suitable permutation of the Jordan blocks. For each~$i \in \{1, \dots, n\}$, let 
\begin{eqnarray}
\label{eq:jordan}
J_{\lambda_1 \cdots \lambda_i} = \bigoplus_{p=1}^i J_{\lambda_p} = \bigoplus_{p=1}^i \bigoplus_{j=1}^{\ell_p} k_j^{(p)} J_{\lambda_p, r_j^{(p)}}. 
\end{eqnarray}

Let $\xi_i = \sum_{j=1}^{\ell_i} k_j^{(i)} r_j^{(i)}$ and let $R = \K[x_1^{(i)}, \cdots, x_{\xi_i}^{(i)} \ | \ 1 \leq i \leq n]$. For each $i \in \{1, \dots, n\}$, we write $\boldsymbol x_i$ for the vector of elements from the $i$-th block of variables, $L_{\lambda_i}$ for the set of entries of $J_{\lambda_i} \boldsymbol x_i \wedge \boldsymbol x_i$, and  $I_{\lambda_i}$ for the ideal $\langle L_{\lambda_i} \rangle$. 
If $i \in \{1, \dots, n\}$, the vector of variables obtained by stacking $\boldsymbol x_1, \cdots,  \boldsymbol x_i$ vertically is denoted by $\boldsymbol x_{1\cdots i}$. Let $L_{\lambda_1 \cdots \lambda_i}$ be the set of entries of
$J_{\lambda_1 \cdots \lambda_i} 
\boldsymbol x_{1 \cdots i}
\wedge 
\boldsymbol  x_{1 \cdots i}$
and let $I_{\lambda_1 \cdots \lambda_i} = \langle L_{\lambda_1 \cdots \lambda_i} \rangle$.  
Consider the set 
\[
M = \left\{\left.x_1^{(i)}, \dots, x_{\xi_i}^{(i)} \ \right| \ 1 \leq i \leq n\right\}
\] 
of variables of $R$. For each $i \in \{1, \dots, n\}$, define $\mathfrak{p}_i$ to be the ideal generated by $M \setminus \{x_1^{(i)}, \cdots, x_{\xi_i}^{(i)}\}$.
\begin{theorem}
\label{thm:decomposition} 
Let $n \geq 2$. Then $I_{\lambda_1 \cdots \lambda_n} = \bigcap_{i=1}^n \left(\langle L_{\lambda_i}\rangle  +\mathfrak{p}_i\right)$.  
\end{theorem}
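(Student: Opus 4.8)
The plan is to argue by induction on $n$ with Proposition~\ref{prop:induction} as the engine. The base case $n = 2$ is precisely Proposition~\ref{prop:initial}: in the present notation $\langle L_{\lambda_1}\rangle + \mathfrak{p}_1$ is the ideal generated by $L_{\lambda_1}$ together with all variables of the second block, and symmetrically $\langle L_{\lambda_2}\rangle + \mathfrak{p}_2$ is generated by $L_{\lambda_2}$ together with all variables of the first block, so the two statements coincide.

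For the inductive step let $n \geq 3$, assume the theorem for $n-1$ pairwise distinct eigenvalues, and write $J_{\lambda_1\cdots\lambda_n} = J_{\lambda_1\cdots\lambda_{n-1}} \oplus J_{\lambda_n}$. I apply Proposition~\ref{prop:induction} with $A = J_{\lambda_1\cdots\lambda_{n-1}}$ and $B = J_{\lambda_n}$, so I must check that the matrix $\varPhi$ of~\eqref{eq:Phi} has rank $\xi_n \sum_{p=1}^{n-1}\xi_p$. Both $A$ and $B$ are upper triangular, being block diagonal matrices whose blocks are Jordan blocks; hence, exactly as in the proof of Proposition~\ref{prop:initial}, the entries $-B[\beta,j]$ with $j < \beta$ and $A[\alpha,i]$ with $i < \alpha$ of $\varPhi$ all vanish, so $\varPhi$ is triangular for the order on $\varOmega$ with diagonal entries $A[\alpha,\alpha] - B[\beta,\beta] = A[\alpha,\alpha] - \lambda_n$. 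Each diagonal entry $A[\alpha,\alpha]$ is one of $\lambda_1, \dots, \lambda_{n-1}$, hence different from $\lambda_n$, so $\det\varPhi = \prod_{(\alpha,\beta)}(A[\alpha,\alpha] - \lambda_n) \neq 0$ and $\varPhi$ has full rank. Proposition~\ref{prop:induction} now gives
\[
I_{\lambda_1\cdots\lambda_n} = \bigl(\langle L_{\lambda_1\cdots\lambda_{n-1}}\rangle + \mathfrak{q}\bigr) \cap \bigl(\langle L_{\lambda_n}\rangle + \mathfrak{p}_n\bigr),
\]
where $\mathfrak{q} = \langle x_1^{(n)}, \dots, x_{\xi_n}^{(n)}\rangle$ is the ideal of the block-$n$ variables; indeed the second factor is the ideal generated by $L_{\lambda_n}$ together with all variables outside block $n$, which is exactly $\langle L_{\lambda_n}\rangle + \mathfrak{p}_n$.

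It remains to rewrite the first factor. Set $R' = \K[x_1^{(i)}, \dots, x_{\xi_i}^{(i)} \mid 1 \leq i \leq n-1]$ and, for $i \in \{1, \dots, n-1\}$, let $\mathfrak{p}_i'$ be the ideal of $R'$ generated by all variables outside block $i$, so that $R = R'[x_1^{(n)}, \dots, x_{\xi_n}^{(n)}]$ and $\mathfrak{p}_i = \mathfrak{p}_i' R + \mathfrak{q}$. Applied inside $R'$, the induction hypothesis reads
\[
\langle L_{\lambda_1\cdots\lambda_{n-1}}\rangle_{R'} = \bigcap_{i=1}^{n-1}\bigl(\langle L_{\lambda_i}\rangle + \mathfrak{p}_i'\bigr).
\]
Since $R$ is free, hence flat, over $R'$, extension of ideals commutes with this finite intersection, so the same identity holds for the extended ideals in $R$. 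Adjoining $\mathfrak{q}$ and using the elementary fact that $\bigl(\bigcap_i J_i R\bigr) + \mathfrak{q} = \bigcap_i\bigl(J_i R + \mathfrak{q}\bigr)$ for ideals $J_i$ of $R'$ — valid because a polynomial of $R$ lies in $J_i R + \mathfrak{q}$ if and only if its specialization at $x_1^{(n)} = \cdots = x_{\xi_n}^{(n)} = 0$ lies in $J_i$ — we obtain
\[
\langle L_{\lambda_1\cdots\lambda_{n-1}}\rangle + \mathfrak{q} = \bigcap_{i=1}^{n-1}\bigl(\langle L_{\lambda_i}\rangle + \mathfrak{p}_i' R + \mathfrak{q}\bigr) = \bigcap_{i=1}^{n-1}\bigl(\langle L_{\lambda_i}\rangle + \mathfrak{p}_i\bigr).
\]
Intersecting with $\langle L_{\lambda_n}\rangle + \mathfrak{p}_n$ finishes the induction.

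I expect the real content to lie in two spots: verifying that $\varPhi$ has full rank, which is a brief triangularity argument recycled from the proof of Proposition~\ref{prop:initial}, and the ring-theoretic bookkeeping of the last paragraph, where the inductive hypothesis must be transported from $R'$ to $R$ and merged with the new block of variables. The second is where I would be most careful, since it is easy to slip on the identification $\mathfrak{p}_i = \mathfrak{p}_i' R + \mathfrak{q}$ and on the ``intersection plus a monomial ideal'' distributivity, which must be applied to ideals all extended from the common subring $R'$.
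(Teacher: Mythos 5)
Your proof is correct and follows essentially the same route as the paper: induction on $n$ with Proposition~\ref{prop:induction} as the engine and Proposition~\ref{prop:initial} as the base case. The only differences are cosmetic improvements: your rank verification replaces the paper's explicit entry-by-entry recomputation of $\varPhi$ with the cleaner observation that both summands are upper triangular and the diagonal of $A$ avoids $\lambda_n$, and your final paragraph spells out the transport of the induction hypothesis from $R'$ to $R$ and the distribution of $+\mathfrak{q}$ over the intersection, a step the paper leaves implicit behind ``by the induction hypothesis, it suffices to show.''
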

\begin{proof}
The proof is by induction on $n$.  For $n=2$, the statement
is 
just Proposition~\ref{prop:initial}. Assume now that the statement
holds for matrices with $m$ Jordan blocks and we wish to show it
for~$m+1$.
We have
$J_{\lambda_1 \cdots \lambda_{m+1}} = J_{\lambda_1 \cdots \lambda_m}
\oplus J_{\lambda_{m+1}}$,
and by the induction hypothesis, it suffices to show
\begin{equation}
\label{eq:idealDecomposition}
I_{\lambda_1 \cdots \lambda_{m+1}} = 
\left\langle L_{\lambda_1 \cdots \lambda_m}, x_1^{(m+1)}, \dots, x_{\xi_{m+1}}^{(m+1)} \right\rangle   \cap 
\left\langle L_{\lambda_{m+1}}, x_1^{(1)}, \dots, x_{\xi_1}^{(1)}, \dots,  x_1^{(m)}, \dots, x_{\xi_m}^{(m)} 
\right\rangle. 
\end{equation}
%
Define a bi-grading on $R$ by 
\[
\deg x_\alpha^{(i)} = 
\left\{
\begin{array}{ll}
(1,0) & \mbox{if $1 \leq i \leq m$ and $1 \leq \alpha \leq \xi_i$} \\
(0,1) & \mbox{if $i=m+1$ and $1 \leq \alpha \leq \xi_{m+1}$.}
\end{array}
\right. 
\]
Write $R_{(1,1)}$ for the bi-graded piece of $R$ in bi-degree $(1,1)$. Let 
\[
\Sigma = \left\{\left. (i,\alpha,\beta)\in \N^3 \ \right| \  1 \leq i \leq m,\  1 \leq \alpha \leq \xi_i,\  1 \leq \beta \leq \xi_{m+1} \right\}. 
\]
We use an ordering on $\N^2$ defined as the ordering on $\varOmega$ above. We define an ordering on $\Sigma$ by $(i,\alpha,\beta) > (j,\gamma, \delta)$ if and only if either $i >j$ or $i=j$ and $(\alpha,\beta)>(\gamma,\delta)$. 

Let $\varPhi$ be the matrix~(\ref{eq:Phi}) for $A = J_{\lambda_1 \cdots \lambda_m}$ and $B=J_{\lambda_{m+1}}$. 
For each $i \in \{1, \dots ,n\}$, let 
\[
 \Pi_{\lambda_i}^{(i)}= \bigcup_{j=1}^{\ell_i} \left\{\left.cr_{j}^{(i)} \ \right| \ 1 \leq c \leq k_j^{(i)}\right\}. 
\]
If $(i,\alpha,\beta) \in \Sigma$, then $J_{\lambda_1 \cdots \lambda_i}[\alpha,:] \boldsymbol x_{1\cdots i}= J_{\lambda_i}[\alpha,:] \boldsymbol x_{i}$. Thus, in the same way as we showed~(\ref{eq:Phigeneral}), one can show that 
\[
\varPhi[(i,\alpha,\beta),(j,\gamma,\delta)] = 
\left\{
\begin{array}{ll}
\lambda_i-\lambda_{m+1} & \mbox{if $(i,\alpha,\beta)=(j,\gamma,\delta)$,} \\
-1 & \mbox{if $i=j$, $\alpha  \in \Pi_{\lambda_i}^{(i)}$, $\beta \not\in \Pi_{\lambda_{m+1}}^{(m+1)}$, and $(\alpha, \beta+1) = (\gamma,\delta)$,} \\
-1 & \mbox{if $i=j$, $\alpha  \not\in \Pi_{\lambda_i}^{(i)}$, $\beta\not\in \Pi_{\lambda_{m+1}}^{(m+1)}$, and $(\alpha, \beta+1)= (\gamma,\delta)$,} \\
1 & \mbox{if $i=j$, $\alpha  \not\in \Pi_{\lambda_i}^{(i)}$, $\beta \in \Pi_{\lambda_{m+1}}^{(m+1)}$, and $(\alpha+1, \beta)= (\gamma,\delta)$,} \\
1 &  \mbox{if $i=j$, $\alpha  \not\in \Pi_{\lambda_i}^{(i)}$, $\beta\not\in \Pi_{\lambda_{m+1}}^{(m+1)}$, and $(\alpha+1, \beta)=(\gamma,\delta)$,} \\
0 & \mbox{otherwise.} 
\end{array}
\right. 
\]

Assume that $(j,\gamma, \delta) > (i, \alpha, \beta) $. If $i<j$, then clearly $\varPhi[(i,\alpha,\beta), (j,\gamma, \delta)] = 0$. Suppose that $i=j$. Then $ (\gamma,\delta) > (\alpha,\beta)$. As indicated above,  $\varPhi[(i,\alpha,\beta), (j,\gamma, \delta)] \not= 0$ if and only if $(j,\gamma,\delta) \in \{(i,\alpha,\beta), \ (i,\alpha,\beta+1), \ (i,\alpha+1,\beta)\}$. In each case, $(\alpha,\beta)$ is greater or equal to $(\gamma,\delta)$. Therefore, if $(\gamma,\delta) > (\alpha,\beta)$, then $\varPhi_{(i,\alpha,\beta), (j,\gamma, \delta)} = 0$. As a result, $\varPhi$ is an upper triangular matrix and the determinant of $\varPhi$ is $\prod_{i=1}^m (\lambda_i -\lambda_{m+1})^{\xi_i\xi_{m+1}}$. Since $\lambda_i \not= \lambda_{m+1}$ by assumption,  the determinant of $\varPhi$ is non-zero. This implies that $\rank (\varPhi) = (\sum_{i=1}^m \xi_i) \xi_{m+1}$. Thus (\ref{eq:idealDecomposition}) follows from Proposition~\ref{prop:induction}. Therefore, $S(m+1)$ is true under the assumption that $S(m)$ is true, and hence $S(n)$ is true for every $n \geq 2$ by induction.
\end{proof}
\begin{example}
\label{ex:diagonal}
Let $A \in \K^{r \times r}$ be a diagonalizable matrix, let $\lambda_1, \dots, \lambda_n\in K$ be the distinct eigenvalues of $A$, and let $k_i$ be the algebraic multiplicity of $\lambda_i$ for each $i \in \{1, \dots n\}$. Then $A$ is similar to a Jordan matrix $J_{\lambda_1 \cdots \lambda_n}$ of the form~(\ref{eq:jordan}) with $\ell_i=1$, $r_1^{(i)} = 1$, and $k_1^{(i)} = k_i$ for each $i \in \{1, \dots n\}$.  Since
$L_{\lambda_i} = 0$, it follows from Theorem~\ref{thm:decomposition} that 
\[
 I_{\lambda_1 \cdots \lambda_n} = \bigcap_{i=1}^n  \mathfrak{p}_i
\]
is the prime decomposition of $I_{\lambda_1 \cdots \lambda_n}$. In particular, $I_{\lambda_1 \cdots \lambda_n}$ is radical. As is stated in Proposition~\ref{prop:similar}, the two ideals $I_A$ and $I_{\lambda_1 \cdots \lambda_n}$ differ only by a linear change of coordinates, and thus $I_A$ is also radical. 
\end{example}
Let $A \in \K^{r \times r}$ and let $\lambda_1, \dots, \lambda_n \in \K$ be the distinct eigenvalues of $A$. Then $A$ is similar to a Jordan matrix $J_{\lambda_1 \cdots \lambda_n}$ of the form~(\ref{eq:jordan}),   
and thus there exists an invertible matrix~$C \in \K^{r \times r}$ such that $J_{\lambda_1 \cdots \lambda_n} = C^{-1} A C$. 
Let $Z_A$ and $Z_{\lambda_1 \cdots \lambda_n}$ be the eigenschemes of $A$ and $J_{\lambda_1 \cdots \lambda_n}$ respectively. The linear change of coordinates determined by $C$ induces an automorphism $\varphi$ of $\P^{r-1}$. The proof of Proposition~\ref{prop:similar} implies $\varphi(Z_{\lambda_1 \cdots \lambda_n}) = Z_A$. 

Let $Z_{\lambda_i}$ be the sub-scheme of  $Z_{\lambda_1 \cdots \lambda_n}$ of $I_{\lambda_i} + \mathfrak{p}_i$ for each $i \in\{1, \dots, n\}$. Then Theorem~\ref{thm:decomposition} implies that 
\[
Z_{\lambda_1 \cdots \lambda_n} = \bigcup_{i=1}^n Z_{\lambda_i}. 
\]
Let $Z_{A, \lambda_i} = \varphi(Z_{\lambda_i})$ for each $i \in \{1, \dots, n\}$. 
\begin{corollary}
\label{cor:generalized-eigenvector}
The affine cone over the scheme-theoretic linear span of $Z_{A, \lambda_i}$ coincides with the generalized eigenspace of $A$ corresponding to $\lambda_i$.
\end{corollary}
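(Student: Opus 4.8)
The plan is to transport everything back to the Jordan matrix $J = J_{\lambda_1 \cdots \lambda_n}$ via the automorphism $\varphi$ and then reduce to the non-degeneracy statement Corollary~\ref{cor:nondegenerate}. First I would record two compatibilities with the projective linear automorphism $\varphi$ induced by $C \in \GL(r,\K)$, where $J = C^{-1}AC$. On one hand, the scheme-theoretic linear span commutes with $\varphi$: since $\varphi$ carries linear subspaces of $\P^{r-1}$ bijectively to linear subspaces and preserves the relation ``is a subscheme of'' (it comes from a ring automorphism, which preserves inclusion of ideals), it sends the scheme-theoretic linear span of any subscheme $Z$ to that of $\varphi(Z)$. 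On the other hand, for a linear subspace $L \subseteq \P^{r-1}$ the affine cone over $\varphi(L)$ in $\K^r$ is $C$ applied to the affine cone over $L$, because $\varphi$ lifts to $\boldsymbol v \mapsto C\boldsymbol v$. Finally, from $A = CJC^{-1}$ the generalized eigenspace of $A$ for $\lambda_i$ is the image under $C$ of the generalized eigenspace of $J$ for $\lambda_i$. Putting these together, it suffices to prove the corollary with $J$ in place of $A$, i.e., that the affine cone over the scheme-theoretic linear span of $Z_{\lambda_i}$ equals the generalized eigenspace of $J$ associated to~$\lambda_i$.

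Next I would identify both sides for $J$ explicitly. Because $\lambda_1, \dots, \lambda_n$ are pairwise distinct and $J_{\lambda_i}$ is (after the permutation of blocks) the direct sum of all Jordan blocks of $J$ with eigenvalue $\lambda_i$, the generalized eigenspace of $J$ for $\lambda_i$ is exactly the coordinate subspace $\{\boldsymbol v \in \K^r : \text{all coordinates of } \boldsymbol v \text{ outside the } i\text{-th block vanish}\}$. This coordinate subspace is precisely the affine cone over the linear subspace $L_i := V(\mathfrak{p}_i) \subseteq \P^{r-1}$. So the corollary for $J$ amounts to the claim that the scheme-theoretic linear span of $Z_{\lambda_i} = V(I_{\lambda_i}+\mathfrak{p}_i)$ is $L_i$.

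The inclusion of the span in $L_i$ is immediate since $\mathfrak{p}_i \subseteq I_{\lambda_i}+\mathfrak{p}_i$. For the reverse inclusion I would observe that $L_i \cong \P^{\xi_i - 1}$ has homogeneous coordinate ring $\K[x_1^{(i)}, \dots, x_{\xi_i}^{(i)}]$, and that $I_{\lambda_i}$ is generated by the entries of $J_{\lambda_i}\boldsymbol x_i \wedge \boldsymbol x_i$, which involve only the $i$-th block of variables; hence $Z_{\lambda_i}$, viewed inside $L_i$, is exactly the eigenscheme of the single-eigenvalue Jordan matrix $J_{\lambda_i}$. Corollary~\ref{cor:nondegenerate} then says this eigenscheme is non-degenerate in $\P^{\xi_i - 1} = L_i$. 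Now if $M \subseteq \P^{r-1}$ is any linear subspace with $Z_{\lambda_i} \subseteq M$, then $Z_{\lambda_i}$ is a subscheme of the scheme-theoretic intersection $M \cap L_i$, which is again a linear subspace of $L_i$; non-degeneracy forces $M \cap L_i = L_i$, i.e.\ $L_i \subseteq M$. Thus the span of $Z_{\lambda_i}$ is $L_i$, and then applying $\varphi$ and $C$ as above yields the statement for $A$.

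The argument is essentially bookkeeping built on Corollary~\ref{cor:nondegenerate}, so there is no deep obstacle; the one point that needs care is the passage between the ambient linear span in $\P^{r-1}$ and the intrinsic one inside $L_i$ (handled above by intersecting with $L_i$ and invoking that the scheme-theoretic intersection of two linear subspaces is linear), together with the clean identification of the two relevant coordinate subspaces as the cone over $V(\mathfrak{p}_i)$ and the generalized eigenspace of $J$.
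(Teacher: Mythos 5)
Your proposal is correct and follows essentially the same route as the paper: reduce to the Jordan matrix via the automorphism $\varphi$ induced by $C$, then invoke Corollary~\ref{cor:nondegenerate} to identify the scheme-theoretic linear span of $Z_{\lambda_i}$ with the linear subspace $V(\mathfrak{p}_i)$, which is the projectivized generalized eigenspace of $J_{\lambda_1\cdots\lambda_n}$. You merely supply more detail than the paper's terse proof, in particular the intersection-with-$L_i$ argument that makes the appeal to non-degeneracy precise.
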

\begin{proof}
Let $i \in \{1, \dots, n\}$. By the construction of $Z_{\lambda_i}$ and Corollary~\ref{cor:nondegenerate}, the scheme-theoretic linear span of $Z_{\lambda_i}$ is the linear subspace defined by $\mathfrak{p}_i$ and coincides with the projectivization of the generalized eigenspace of $J_{\lambda_1 \cdots \lambda_n}$ corresponding to $\lambda_i$. 

Corollary~\ref{cor:generalized-eigenvector} follows from the fact that $\boldsymbol v_i$ is a generalized eigenvector of $J_{\lambda_1 \cdots \lambda_n}$ corresponding to $\lambda_i$ if and only if $C^{-1} \boldsymbol v_i$ is a generalized eigenvector of $A$ corresponding to~$\lambda_i$. 
\end{proof}
For each $i \in \{1, \dots, n\}$, let
$R_i = \K[x_1^{(i)}, \dots, x_{\xi_i}^{(i)}]$, let
$I_{\lambda_i}$ be the ideal of $R_i$ generated by $L_{\lambda_i}$,
and let
$I_{\lambda_i} = \bigcap_{j=1}^{\ell_i} \mathfrak{q}_{\lambda_i,j}$ be
the primary decomposition of $I_{\lambda_i}$ given in
Proposition~\ref{prop:decomposition}.  By Theorem~\ref{thm:primary},
Proposition~\ref{prop:hilbertFunction}, and
Theorem~\ref{thm:decomposition}, one obtains the following corollary.
\begin{corollary}
\label{cor:primary-decomposition}
The ideal $I_{\lambda_1 \cdots \lambda_n}$ of $J_{\lambda_1 \cdots \lambda_n}$ has an irredundant primary decomposition 
\[
I_{\lambda_1 \cdots \lambda_n} = \bigcap_{i=1}^n \left[\bigcap_{j=1}^{\ell_i} \left(\left\langle \mathfrak{q}_{\lambda_i,j} \right\rangle+\mathfrak{p}_i\right)\right]. 
\] 
Furthermore, for each $i \in \{1, \dots, n\}$ and $j \in \{1, \dots, \ell_i\}$, 
\[
H_{R/\left(\left\langle \mathfrak{q}_{\lambda_i,j} \right\rangle+\mathfrak{p}_i\right)}(t) =  H_{R_i/\mathfrak{q}_{\lambda_i,j}}(t) =  r_j^{(i)}\, {t+k_1^{(i)}+ \cdots k_j^{(i)}-1 \choose t}.
\]
\end{corollary}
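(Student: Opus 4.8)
The plan is to read the decomposition off from Theorem~\ref{thm:decomposition} by substituting the single-eigenvalue primary decompositions of Section~\ref{sec:jordan-single}, and then to settle primality, the Hilbert functions, and irredundancy one at a time.

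The organizing device is, for each $i\in\{1,\dots,n\}$, the surjection $\pi_i\colon R\twoheadrightarrow R/\mathfrak{p}_i$. Since $\mathfrak{p}_i$ is generated by precisely the variables of $R$ other than $x_1^{(i)},\dots,x_{\xi_i}^{(i)}$, we may identify $R/\mathfrak{p}_i$ with $R_i$, and for an ideal $\mathfrak{a}\subseteq R_i$ one has $\pi_i^{-1}(\mathfrak{a})=\langle\mathfrak{a}\rangle+\mathfrak{p}_i$, the extension of $\mathfrak{a}$ to $R$ together with $\mathfrak{p}_i$. Preimages along a surjection commute with finite intersections and send a primary ideal to a primary ideal, with radical the preimage of the radical. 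Applying this to the primary decomposition $I_{\lambda_i}=\bigcap_{j=1}^{\ell_i}\mathfrak{q}_{\lambda_i,j}$ of Proposition~\ref{prop:decomposition} and Theorem~\ref{thm:primary} gives
\[
\langle L_{\lambda_i}\rangle+\mathfrak{p}_i=\pi_i^{-1}(I_{\lambda_i})=\bigcap_{j=1}^{\ell_i}\pi_i^{-1}(\mathfrak{q}_{\lambda_i,j})=\bigcap_{j=1}^{\ell_i}\bigl(\langle\mathfrak{q}_{\lambda_i,j}\rangle+\mathfrak{p}_i\bigr),
\]
with each $\langle\mathfrak{q}_{\lambda_i,j}\rangle+\mathfrak{p}_i$ primary. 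Substituting this into Theorem~\ref{thm:decomposition} produces the displayed decomposition and shows it is a primary decomposition. The Hilbert-function claim is then immediate: $R/(\langle\mathfrak{q}_{\lambda_i,j}\rangle+\mathfrak{p}_i)=R/\pi_i^{-1}(\mathfrak{q}_{\lambda_i,j})\cong R_i/\mathfrak{q}_{\lambda_i,j}$ through the graded inclusion $R_i\hookrightarrow R$, so the Hilbert functions agree, and the value is Proposition~\ref{prop:hilbertFunction}.

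For irredundancy I would verify two points. First, the associated primes $\pi_i^{-1}(\mathfrak{h}_j^{(i)})$, where $\mathfrak{h}_j^{(i)}$ is the prime $\mathfrak{h}_j$ of Section~\ref{sec:jordan-single} attached to the block $J_{\lambda_i}$, are pairwise distinct: these are monomial primes, and inspecting which variables they contain shows that changing $i$, or changing $j$ within a fixed $i$, changes the ideal. Second, no component can be dropped. Suppose the $(i_0,j_0)$-component were superfluous. Pick a nonzero homogeneous $f\in R_{i_0}$ with $f\in\bigcap_{j\neq j_0}\mathfrak{q}_{\lambda_{i_0},j}$ and $f\notin I_{\lambda_{i_0}}$; such an $f$ exists precisely because the single-eigenvalue decomposition of $I_{\lambda_{i_0}}$ is itself irredundant. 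Viewed in $R$, this $f$ has positive degree and involves only block-$i_0$ variables, so $f\in\mathfrak{p}_i\subseteq\langle\mathfrak{q}_{\lambda_i,j}\rangle+\mathfrak{p}_i$ for all $i\neq i_0$ and all $j$, while $f\in\pi_{i_0}^{-1}(\mathfrak{q}_{\lambda_{i_0},j})=\langle\mathfrak{q}_{\lambda_{i_0},j}\rangle+\mathfrak{p}_{i_0}$ for every $j\neq j_0$ by the choice of $f$. On the other hand $f\notin I_{\lambda_1\cdots\lambda_n}$: otherwise, from $I_{\lambda_1\cdots\lambda_n}\subseteq\bigcap_j(\langle\mathfrak{q}_{\lambda_{i_0},j}\rangle+\mathfrak{p}_{i_0})=\pi_{i_0}^{-1}(I_{\lambda_{i_0}})$ and $\pi_{i_0}(f)=f$ we would obtain $f\in I_{\lambda_{i_0}}$. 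Hence deleting the $(i_0,j_0)$-component strictly enlarges the intersection, a contradiction.

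It remains to establish the one genuinely combinatorial ingredient used above: irredundancy of each single-eigenvalue decomposition $I_\lambda=\bigcap_{j=1}^{\ell}\mathfrak{q}_{\lambda,j}$, equivalently (the primes $\mathfrak{h}_1\supsetneq\cdots\supsetneq\mathfrak{h}_\ell$ being distinct) that $\mathfrak{h}_j\in\Ass(R_\lambda/I_\lambda)$ for every $j$. When $j=\ell$ this is clear, since $\mathfrak{h}_\ell=\sqrt{I_\lambda}$ is the unique minimal prime by Proposition~\ref{prop:radical}. When $j<\ell$, so $r_j\geq 2$, I expect the last-row variable $x(j,1,r_j)$ of a size-$r_j$ block to satisfy $(I_\lambda:x(j,1,r_j))=\mathfrak{h}_j$: the inclusion $\supseteq$ reduces to checking $x(j,1,r_j)\,x(\boldsymbol a)\in I_\lambda$ for each index $\boldsymbol a\notin\varTheta_j$, which follows from Lemma~\ref{lem:monomial} together with the monomials in $H_3\cup H_4$, while $\subseteq$ follows from Corollary~\ref{cor:initialideal}, since for $\boldsymbol a\in\varTheta_j$ the product $x(j,1,r_j)\,x(\boldsymbol a)$ is not divisible by any leading term of $I_\lambda$, hence is a nonzero standard monomial modulo $I_\lambda$. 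Carrying out this annihilator computation in full detail is the main obstacle; everything else is formal bookkeeping with the maps $\pi_i$ and with facts already proved.
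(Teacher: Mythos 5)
Your argument is correct and follows the same route as the paper, which obtains the corollary by simply combining Theorem~\ref{thm:decomposition} with Proposition~\ref{prop:decomposition}, Theorem~\ref{thm:primary}, and Proposition~\ref{prop:hilbertFunction}; your preimage formalism $\pi_i^{-1}(\mathfrak{q}_{\lambda_i,j})=\langle\mathfrak{q}_{\lambda_i,j}\rangle+\mathfrak{p}_i$ is exactly the right way to make that assembly precise, and the Hilbert function claim follows as you say. The paper asserts irredundancy without proof, so your treatment of it is a genuine addition, and its global part (distinct monomial primes, plus the positive-degree element $f$ witnessing that no component is superfluous) is sound, modulo taking $f$ of positive degree when $\ell_{i_0}=1$.

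There is, however, a concrete flaw in your sketch of the single-eigenvalue annihilator computation $(I_\lambda:x(j,1,r_j))=\mathfrak{h}_j$. You claim that for $\boldsymbol a\in\varTheta_j$ the monomial $x(j,1,r_j)\,x(\boldsymbol a)$ is not divisible by any leading term of $I_\lambda$. This fails whenever $k_j\geq 2$: taking $\boldsymbol a=(j,a_2,1)$ with $2\leq a_2\leq k_j$, the pair $\bigl((j,1,r_j-1),(j,a_2,1)\bigr)$ lies in $\varGamma_2$ (one has $i_3+j_3=r_j\leq r_j$ and $(j,1,r_j-1)>(j,a_2,1)$ since $1<a_2$), so $x(j,1,r_j)\,x(j,a_2,1)$ \emph{is} the leading term of the reduced Gr\"obner basis element $g\bigl((j,1,r_j-1),(j,a_2,1)\bigr)=x(j,1,r_j)x(j,a_2,1)-x(j,1,1)x(j,a_2,r_j)$. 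The conclusion survives, because this product reduces to the nonzero standard monomial $x(j,1,1)x(j,a_2,r_j)$ rather than to $0$; but to get the full containment $(I_\lambda:x(j,1,r_j))\cap\K[x(\boldsymbol i)\mid\boldsymbol i\in\varTheta_j]=0$ you must run the normal-form computation for a general product $h\cdot x(j,1,r_j)$ with $h$ supported on $\varTheta_j$ (tracking these $\varGamma_2$-reductions and checking the resulting standard monomials are distinct), not merely inspect divisibility of degree-two monomials. With that repair the annihilator argument goes through and the irredundancy claim is established.
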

From Example~\ref{ex:diagonal}, and Corollaries~\ref{cor:diagonalizability} and~\ref{cor:primary-decomposition} one obtains the following corollary.    
\begin{corollary}
\label{cor:diagonalizability-general}
Let $A \in \K^{r \times r}$ have eigenvalues in~$\K$. The following
conditions are equivalent:
\begin{itemize}
\item[(i)] The matrix $A$ is diagonalizable.  
\item[(ii)] The ideal $I_A$ of $A$ is radical. 
\item[(iii)] The eigenscheme $Z_A$ of $A$ is reduced. 
\end{itemize}
\end{corollary}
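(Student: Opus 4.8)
The plan is to prove the cycle of implications $(i)\Rightarrow(ii)\Rightarrow(iii)\Rightarrow(i)$, pulling together the pieces already assembled in the paper. First I would reduce to the Jordan case: by Proposition~\ref{prop:similar} the eigenschemes of $A$ and of its Jordan form $J_{\lambda_1\cdots\lambda_n}$ differ by an automorphism of $\P^{r-1}$, so $I_A$ is radical (resp.\ $Z_A$ reduced) if and only if the same holds for $I_{\lambda_1\cdots\lambda_n}$ (resp.\ $Z_{\lambda_1\cdots\lambda_n}$). The automorphism is induced by a linear change of coordinates, which preserves radicality of the ideal and reducedness of the scheme. So from now on we may assume $A=J_{\lambda_1\cdots\lambda_n}$ is in the form~\eqref{eq:jordan}.

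For $(i)\Rightarrow(ii)$: if $A$ is diagonalizable, then in~\eqref{eq:jordan} every $\ell_i=1$ and $r_1^{(i)}=1$, so this is exactly the content of Example~\ref{ex:diagonal}, which exhibits $I_{\lambda_1\cdots\lambda_n}=\bigcap_{i=1}^n\mathfrak{p}_i$ as an intersection of primes and concludes that $I_A$ is radical. For $(ii)\Rightarrow(iii)$: this is immediate, since by definition the eigenscheme $Z_A$ is the closed subscheme of $\P^{r-1}$ cut out by the homogeneous ideal $I_A$, and a scheme defined by a radical (hence saturated-radical) ideal is reduced; more precisely, the homogeneous coordinate ring modulo $I_A$ has no nilpotents, so neither does its sheafification.

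For the key implication $(iii)\Rightarrow(i)$, equivalently $(ii)\Rightarrow(i)$ after the previous step, I would argue contrapositively: suppose $A$ is not diagonalizable, so some Jordan block $J_{\lambda_i,r}$ has size $r\geq 2$. By Corollary~\ref{cor:primary-decomposition}, $I_{\lambda_1\cdots\lambda_n}$ has the irredundant primary decomposition $\bigcap_{i=1}^n\bigcap_{j=1}^{\ell_i}(\langle\mathfrak{q}_{\lambda_i,j}\rangle+\mathfrak{p}_i)$, and for the index $j$ corresponding to a block of size $\geq 2$ the component has Hilbert polynomial $r_j^{(i)}\binom{t+k_1^{(i)}+\cdots+k_j^{(i)}-1}{t}$ with $r_j^{(i)}\geq 2$, so by Proposition~\ref{prop:hilbertFunction} its degree exceeds the degree of its radical $\mathfrak{h}$ (which is a linear space, degree $1$). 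Hence that primary component is not prime, and since the decomposition is irredundant, $I_A$ is not radical, so $Z_A$ is not reduced. The cleanest way to package this is via Corollary~\ref{cor:diagonalizability}: each $I_{\lambda_i}$ (in the variables $x^{(i)}$) is radical if and only if the $\lambda_i$-part of $A$ is diagonalizable, and since in Corollary~\ref{cor:primary-decomposition} the decomposition is irredundant with the $\langle\mathfrak{q}_{\lambda_i,j}\rangle+\mathfrak{p}_i$ pairwise having distinct associated primes $\mathfrak{h}_j+\mathfrak{p}_i$, the whole ideal $I_{\lambda_1\cdots\lambda_n}$ is radical if and only if every $I_{\lambda_i}$ is, if and only if every block has size $1$, if and only if $A$ is diagonalizable.

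The only mild subtlety — and the step I would be most careful about — is the passage between ``$I_A$ radical'' and ``$Z_A$ reduced'': one must note that $I_A$ need not be saturated with respect to the irrelevant ideal, but Corollary~\ref{cor:nondegenerate} (applied blockwise) shows the relevant ideals contain no linear forms and in fact the computed Gröbner bases show they are already saturated in the degrees that matter, so no nilpotents are introduced or removed upon sheafifying; alternatively one simply observes that $\operatorname{Proj}$ of a reduced ring is reduced and $\operatorname{Proj}$ only depends on the saturation, while saturation of a radical homogeneous ideal is again radical. With that in hand the three conditions are equivalent.
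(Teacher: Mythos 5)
Your proposal is correct and follows essentially the same route the paper intends: the paper gives no written proof, simply citing Example~\ref{ex:diagonal}, Corollary~\ref{cor:diagonalizability}, and Corollary~\ref{cor:primary-decomposition}, and your argument is precisely a fleshed-out version of that citation chain (reduction to Jordan form via Proposition~\ref{prop:similar}, Example~\ref{ex:diagonal} for (i)$\Rightarrow$(ii), and the degree-versus-radical comparison from Proposition~\ref{prop:hilbertFunction} inside the irredundant decomposition for the converse). Your care about the saturation issue in passing between (ii) and (iii) is a worthwhile addition; the key point, which your main argument already supplies, is that the non-prime component has radical $\mathfrak{h}_j+\mathfrak{p}_i$ strictly containing no power of the irrelevant ideal, so the non-reducedness survives to $Z_A$.
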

The following is a consequence of Proposition~\ref{prop:curvilinear}
and Theorem~\ref{thm:decomposition}.
\begin{corollary}
Let $A \in \K^{r \times r}$. If the eigenscheme $Z_A$ of $A$ is of dimension $0$, then $Z_A$ is curvilinear. 
\end{corollary}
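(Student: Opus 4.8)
The plan is to deduce the statement from the two main structural results of the paper, reducing everything to the single-eigenvalue situation already settled in Proposition~\ref{prop:curvilinear}.

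\emph{Reductions.} Curvilinearity is a property that can be checked at each of the finitely many closed points of the zero-dimensional scheme $Z_A$, where it asserts that the Zariski cotangent space has dimension at most one; this condition is unaffected by extending the base field and by applying an automorphism of $\P^{r-1}$. Hence, possibly after a finite extension of $\K$, Proposition~\ref{prop:similar} lets us replace $A$ by its Jordan canonical form, and we may assume $A = J_{\lambda_1 \cdots \lambda_n}$ as in~(\ref{eq:jordan}), with $\lambda_1, \dots, \lambda_n \in \K$.

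\emph{Decomposition into disjoint pieces.} By Theorem~\ref{thm:decomposition}, $I_{A} = \bigcap_{i=1}^n \bigl(\langle L_{\lambda_i}\rangle + \mathfrak{p}_i\bigr)$. The $i$-th component is supported on the coordinate linear subspace $V(\mathfrak{p}_i) \subseteq \P^{r-1}$, and for $i \neq j$ the ideal $\mathfrak{p}_i + \mathfrak{p}_j$ contains every variable of $R$, so $V(\mathfrak{p}_i) \cap V(\mathfrak{p}_j) = \emptyset$. Consequently $Z_A$ is the disjoint union of the closed subschemes $Z_{\lambda_i}$ cut out by $\langle L_{\lambda_i}\rangle + \mathfrak{p}_i$, each $Z_{\lambda_i}$ being open and closed in $Z_A$, hence a union of connected components of $Z_A$. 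Therefore $Z_A$ is curvilinear if and only if each $Z_{\lambda_i}$ is, and since $\dim Z_A = 0$ each $Z_{\lambda_i}$ is zero-dimensional.

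\emph{Single-eigenvalue case and conclusion.} Reducing modulo $\mathfrak{p}_i$ identifies $V(\mathfrak{p}_i)$ with $\P^{\xi_i - 1}$ and carries $\langle L_{\lambda_i}\rangle + \mathfrak{p}_i$ to the ideal generated by the $2\times 2$ minors of $(J_{\lambda_i}\boldsymbol x_i \mid \boldsymbol x_i)$; that is, $Z_{\lambda_i}$ is exactly the eigenscheme of the Jordan matrix $J_{\lambda_i}$, which has the single eigenvalue $\lambda_i \in \K$. If $\xi_i = 1$ this eigenscheme is a single reduced point and is trivially curvilinear; if $\xi_i \geq 2$, then, being zero-dimensional, it is curvilinear by Proposition~\ref{prop:curvilinear}. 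Hence every $Z_{\lambda_i}$, and therefore $Z_A$, is curvilinear. The only genuinely delicate point is the reduction step: one must check that curvilinearity descends along the field extension and the automorphism of $\P^{r-1}$, and—more substantively—that the intersection in Theorem~\ref{thm:decomposition} is a decomposition into subschemes with pairwise disjoint support, so that a local property such as curvilinearity can be verified one component at a time. Granting this, no further computation is needed, as all the work is carried out inside Proposition~\ref{prop:curvilinear}.
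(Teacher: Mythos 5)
Your proof is correct and follows exactly the route the paper intends: the paper gives no written proof, stating only that the corollary "is a consequence of Proposition~\ref{prop:curvilinear} and Theorem~\ref{thm:decomposition}," and your argument supplies precisely the missing details (reduction to Jordan form, disjointness of the components $Z_{\lambda_i}$, and the application of the single-eigenvalue case to each component). Your explicit attention to the field-extension and automorphism-invariance of curvilinearity is a welcome addition that the paper leaves implicit.
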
 

\section{Eigenschemes and tangent bundles} 
\label{sec:tangent-bundle}
The rest of the paper concerns an interpretation of eigenschemes via tangent bundles.
The eigenscheme of an $r \times r$ matrix can be expressed as the zero scheme of a global section of the tangent bundle on $\P^{r-1}$.  This idea has been known to experts for a long time and also appears in~\cite{ottaviani-oeding}.  We expose it here because of its natural connection to the material presented.
We use standard notation from algebraic geometry and assume
familiarity with at least Chapters I and II in~\cite{hartshorne}.
In the remaining two sections, the field $\K$ is assumed to be algebraically closed of characteristic~$0$.  This assumption is inherited from~\cite[Lemma~2.5]{ein}.
%

Let $K^\bullet$ be the Koszul complex, i.e., the complex $\{K^\ell, \ \wedge^\ell \boldsymbol x\}_{0 \leq \ell \leq r-1}$ with 
\[
 K^\ell = \bigwedge^\ell \K^r \otimes \mathcal{O}_{\P^{r-1}}(\ell)
\]
and  $ \wedge^\ell \boldsymbol x: \bigwedge^\ell \K^r \otimes \mathcal{O}_{\P^{r-1}}(\ell) \rightarrow \bigwedge^{\ell+1} \K^r  \otimes \mathcal{O}_{\P^{r-1}}(\ell+1)$ given by 
\[
  \wedge^\ell \boldsymbol x \   (\mathbf{e}_{i_1} \wedge \cdots \wedge \mathbf{e}_{i_\ell}) = \sum_{i=0}^{r-1} x_i\ \mathbf{e}_i \wedge \mathbf{e}_{i_1} \wedge \cdots \wedge \mathbf{e}_{i_\ell}. 
\]
The bundle $T_{\mathbb{P}^{r-1}}$ is the image of $\wedge^1 \boldsymbol x: \K^r \otimes \mathcal{O}_{\P^{r-1}}(1) \rightarrow \bigwedge^2 \K^r \otimes \mathcal{O}_{\P^{r-1}}(2)$: 
\begin{equation*}
\xymatrix{
0 \ar[r]  & \mathcal{O}_{\P^{r-1}} \ar[r]  & \K^r \otimes \mathcal{O}_{\P^{r-1}}(1) \ar[r]^{\hspace{5mm} \wedge^1 \boldsymbol x} &  T_{\P^{r-1}} \ar[r] & 0. 
}
\end{equation*}
Note that $H^0(\P^{r-1}, \mathcal{O}_{\P^{r-1}}) \simeq \K$, $H^1(\P^{r-1}, \mathcal{O}_{\P^{r-1}}) = 0$, and  $H^0(\P^{r-1}, \K^r \otimes \mathcal{O}_{\P^{r-1}}(1)) \simeq \K^{r \times r}$. Thus, taking cohomology yields the following exact sequence: 
\[
\xymatrix{
0 \ar[r] & \K \ar[r]  &\K^{r \times r} \ar[rr]^{H^0(\wedge^1\boldsymbol x) \hspace{5mm}}   & & H^0 (\P^{r-1}, T_{\P^{r-1}}) \ar[r]  & 0.
}
\]
Since the image of $A\boldsymbol x$ under $H^0(\wedge^1\boldsymbol x)$
equals $A\boldsymbol x \wedge \boldsymbol x$, we obtain the set
equality
\begin{eqnarray*}
  H^0(\P^{r-1}, T_{\P^{r-1}}) 
  & = & \{ A\boldsymbol x \wedge\boldsymbol x\ | \ A \in \K^{r\times r}\}.   
\end{eqnarray*}
This shows that every $r \times r$ matrix $A$ yields a global section
$s_A$ of $T_{\P^{r-1}}$.  The kernel of the map consists of
scalar multiples of the identity matrix. 
An eigenvector $\boldsymbol v \in \K^r$ of $A$ corresponds to a point
of the zero scheme $(s_A)_0$ of $s_A$. Therefore, the definition of an
eigenvector of a linear operator can be rephrased as follows.
A non-zero vector $\boldsymbol v \in \K^r$ is an eigenvector of $A$ if
$[\boldsymbol v]$ lies in the zero scheme of the global section $s_A$
of $T_{\P^{r-1}}$.
\begin{remark}
Let $A \in \K^{r \times r}$ be generic. Then the global section $s_A$ of $T_{\P^{r-1}}$ is \emph{regular}: its zero locus has pure codimension~$r-1$. Its zero scheme is therefore zero-dimensional and its length is the ${(r-1)}$-st Chern class of~$T_{\P^{r-1}}$. In other words, the number of linearly independent eigenvectors for $A$ coincides with $c_{r-1}(T_{\P^{r-1}}) = r$, as expected.
\end{remark}
\begin{remark}
In Corollary~\ref{cor:diagonalizability-general}, we showed that the condition for a matrix $A \in \K^{r \times r}$  to be diagonalizable is equivalent to the condition for the eigenscheme $Z_A$ of $A$ to be reduced. Since $Z_A$ can be identified with the zero scheme $(s_A)_0$ of the global section $s_A$ of $T_{\P^{r-1}}$,
the matrix $A$ is diagonalizable if and only if $(s_A)_0$ is reduced.  
\end{remark}
Let $s_0, s_1 \in H^0(\P^{r-1}, T_{\P^{r-1}})$ be generic and let $E$
be their dependency locus, i.e.,
\[
E = \left\{\left. [\boldsymbol v]  \in \P^{r-1} \ \right| \ \mbox{$s_0([\boldsymbol v])$ and $s_1([\boldsymbol v])$ are linearly dependent} \right\}. 
\]
Lemma~2.5 in~\cite{ein}, which requires $\K$ to be algebraically closed of characteristic $0$, implies that $E$ is a non-singular curve.
\begin{proposition}
\label{prop:genus} The genus of $E$ is $(r-1)(r-2)/2$. 
\end{proposition}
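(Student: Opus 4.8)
The plan is to realize $E$ as the zero locus of a regular section of an explicit rank-$(r-1)$ bundle on $\P^{r-1}\times\P^1$ and then read off the genus from adjunction and a Chern-class computation.  First I would use the surjection $\K^{r\times r}\to H^0(\P^{r-1},T_{\P^{r-1}})$ (with kernel the scalar matrices) to write $s_0=s_A$ and $s_1=s_B$ for generic $A,B\in\K^{r\times r}$.  Since $s_M([\boldsymbol v])=M\boldsymbol v\wedge\boldsymbol v$, the values $s_0([\boldsymbol v])$ and $s_1([\boldsymbol v])$ are linearly dependent exactly when $(\alpha A+\beta B)\boldsymbol v\wedge\boldsymbol v=0$ for some $[\alpha:\beta]\in\P^1$, i.e.\ when $\boldsymbol v$ is an eigenvector of the pencil member $\alpha A+\beta B$; thus $E$ is the set of eigenvectors of the members of the pencil of $A$ and $B$.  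For generic $A,B$ there is no nonzero common eigenvector of $A$ and $B$ (the locus of triples $(A,B,[\boldsymbol v])$ with $\boldsymbol v$ a common eigenvector has dimension $2r^2-r+1<2r^2$), so the pencil member having a given $[\boldsymbol v]\in E$ as an eigenvector is unique.  Hence the first projection $p$ from the incidence variety
\[
\widetilde{E}=\bigl\{([\boldsymbol v],[\alpha:\beta])\in\P^{r-1}\times\P^1\ \bigl|\ (\alpha A+\beta B)\boldsymbol v\wedge\boldsymbol v=0\bigr\}
\]
is a finite bijection onto $E$.  Writing $q$ for the second projection, $\widetilde{E}$ is the zero scheme of the section $\sigma=\alpha\,p^{*}s_A+\beta\,p^{*}s_B$ of $\mathcal{F}:=p^{*}T_{\P^{r-1}}\otimes q^{*}\mathcal{O}_{\P^1}(1)$, a globally generated bundle of rank $r-1=\dim(\P^{r-1}\times\P^1)-1$ every section of which has this form (by the Künneth isomorphism $H^0(\mathcal{F})\cong H^0(T_{\P^{r-1}})\otimes H^0(\mathcal{O}_{\P^1}(1))$).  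By Bertini, for generic $A,B$ the section $\sigma$ is regular and $\widetilde{E}$ is a smooth curve, which is connected since it is the zero locus of a section of the ample bundle $\mathcal{F}$ of rank $<\dim$; as $E$ is non-singular by \cite[Lemma~2.5]{ein} and $p$ is finite and bijective, Zariski's main theorem gives $p\colon\widetilde{E}\xrightarrow{\sim}E$.

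Next I would compute the genus of $\widetilde{E}$ by adjunction.  Since $\widetilde{E}$ is cut out by a regular section of $\mathcal{F}$ on $X:=\P^{r-1}\times\P^1$, its normal bundle is $\mathcal{F}|_{\widetilde{E}}$, so $\omega_{\widetilde{E}}\cong\omega_X\otimes\det\mathcal{F}|_{\widetilde{E}}$ and $[\widetilde{E}]=c_{r-1}(\mathcal{F})\cap[X]$, whence
\[
2g(E)-2=\deg\omega_{\widetilde{E}}=\int_X\bigl(c_1(\omega_X)+c_1(\mathcal{F})\bigr)\cdot c_{r-1}(\mathcal{F}).
\]
Let $h$ and $t$ be the hyperplane classes pulled back from $\P^{r-1}$ and $\P^1$, so $h^{r}=t^{2}=0$ and $\int_X h^{r-1}t=1$, and $c_1(\omega_X)=-rh-2t$.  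The Euler sequence gives $c(T_{\P^{r-1}})=(1+h)^{r}$, hence $c_k(T_{\P^{r-1}})=\binom{r}{k}h^{k}$; twisting by $\mathcal{O}_{\P^1}(1)$ and using $t^{2}=0$ yields $c_1(\mathcal{F})=rh+(r-1)t$ and $c_{r-1}(\mathcal{F})=rh^{r-1}+\binom{r}{2}h^{r-2}t$.  Therefore $c_1(\omega_X)+c_1(\mathcal{F})=(r-3)t$ and
\[
2g(E)-2=\int_X(r-3)t\cdot\bigl(rh^{r-1}+\tbinom{r}{2}h^{r-2}t\bigr)=r(r-3),
\]
so $g(E)=(r-1)(r-2)/2$.

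The part I expect to be the real obstacle is the scheme-theoretic identification $\widetilde{E}\cong E$: one must verify, for generic $A,B$, that $A$ and $B$ have no common eigenvector (so $p$ is bijective), that $\sigma$ is a regular section, and that the resulting curve is smooth, after which the smoothness of $E$ from \cite[Lemma~2.5]{ein} together with Zariski's main theorem closes the gap and the Chern-class bookkeeping is routine.  As a consistency check, the same genus comes out of the Riemann--Hurwitz formula applied to the $r$-sheeted covering $\pi\colon E\cong\widetilde{E}\to\P^1$ sending $[\boldsymbol v]$ to its pencil member: $\pi$ is simply branched over the $r(r-1)$ points where the pencil meets the discriminant hypersurface $\{M:\det(tI-M)\text{ has a repeated root}\}$, which has degree $r(r-1)$, giving $2g(E)-2=r(-2)+r(r-1)=r^{2}-3r$.
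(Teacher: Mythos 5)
Your argument is correct, and it takes a genuinely different route from the paper's. The paper stays on $\P^{r-1}$: it resolves $\cO_E$ by the Eagon--Northcott complex of $\boldsymbol s^\vee:\Omega^1_{\P^{r-1}}\to 2\cO_{\P^{r-1}}$ (valid because $E$ has expected codimension), computes $\chi(\cO_E)$ term by term using the Bott formula $\chi(\Omega^j_{\P^{r-1}})=(-1)^j$, and reads off $g(E)=\sum_{j=2}^{r-1}(j-1)$. You instead pass to the incidence curve $\widetilde E\subseteq\P^{r-1}\times\P^1$, identify it with $E$ (no common eigenvector for generic $A,B$, finiteness plus normality of $E$, Zariski's main theorem), realize it as the zero locus of a regular section of $p^*T_{\P^{r-1}}\otimes q^*\cO_{\P^1}(1)$, and apply adjunction; your Chern-class bookkeeping checks out ($c_1(\omega_X)+c_1(\mathcal F)=(r-3)t$, $c_{r-1}(\mathcal F)=rh^{r-1}+\binom{r}{2}h^{r-2}t$, hence $2g-2=r(r-3)$). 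The paper's route is shorter because it never has to establish the isomorphism $E\cong\widetilde E$ or the connectedness of the zero locus (though the paper, too, tacitly uses $h^0(\cO_E)=1$ when converting $\chi(\cO_E)$ into a genus); your route costs those verifications but buys more geometry: the normal bundle of $E$ is made explicit, and the second projection $q|_{\widetilde E}$ is precisely the $r{:}1$ map to the pencil that Section~6 uses, so your construction makes the Riemann--Hurwitz argument for the discriminant degree transparent. One caveat on your consistency check: the paper derives the degree of the discriminant \emph{from} this genus via Riemann--Hurwitz, so using the resultant formula $\deg\mathcal D=r(r-1)$ to re-derive the genus is fine as an independent check but should not be folded into the proof if one wants Section~6 to remain non-circular.
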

\begin{proof}
Let $\boldsymbol s = (s_0,s_1)$ be the sheaf morphism from the direct sum  $2\cO_{\P^{r-1}}$ of two copies of~$\cO_{\P^{r-1}}$  to $T_{\P^{r-1}}$. The Eagon-Northcott complex of the homomorphism of vector bundles~$\boldsymbol s^\vee : \Omega^1_{\P^{r-1}} \rightarrow 2\cO_{\P^{r-1}}$ is:
\begin{equation}
\label{eq:eagon-northcott}
0 \rightarrow (r-2)\Omega_{\P^{r-1}}^{r-1} \rightarrow (r-3) \Omega_{\P^{r-1}}^{r-2} \rightarrow \cdots \rightarrow 2 \Omega_{\P^{r-1}}^3 \rightarrow \Omega_{\P^{r-1}}^2\stackrel{\wedge^2 \boldsymbol s^\vee}{\longrightarrow} \cO_{\P^{r-1}}, 
\end{equation}
where $(i-1)\Omega_{\P^{r-1}}^i$ denotes the direct sum of $(i-1)$ copies of the $i$-th exterior power of~$\Omega_{\P^{r-1}}^1$.
Since $E$ has the expected codimension, complex~(\ref{eq:eagon-northcott}) is a locally free resolution of~$\cO_E$. Therefore, we obtain 
\[
\chi (\cO_E) = 1 - \sum_{j=2}^{r-1} (-1)^j(j-1)\chi\left(\Omega_{\P^{r-1}}^j\right). 
\]
Thus the Riemann-Roch theorem implies that 
\[
g(E) = \sum_{j=2}^{r-1} (-1)^j(j-1)\chi\left(\Omega_{\P^{r-1}}^j\right). 
\]
By the Bott formula for $\P^{r-1}$ (see, for example, \cite{okonek-schneider-spindler}), we obtain 
\begin{eqnarray*}
\chi \left(\Omega_{\P^{r-1}}^j\right) & = & \sum_{k=0}^{r-1} (-1)^k \dim H^k\left(\P^{r-1}, \Omega_{\P^{r-1}}^j\right) \\
& = & (-1)^j \dim H^j\left(\P^{r-1}, \Omega_{\P^{r-1}}^j\right) \\
& = & (-1)^j. 
\end{eqnarray*} 
Therefore, 
\[
g(E) =  \sum_{j=2}^{r-1} (j-1) = \frac{(r-1)(r-2)}{2}. \qedhere
\]
\end{proof} 

\section{The discriminant}
\label{sec:discriminant}
The \emph{discriminant hypersurface} is the hypersurface in
$\P(\K^{r \times r})$ formed by $r \times r$ matrices with entries
from~$\K$ which are not diagonalizable.  It can also be thought of as
the vanishing hypersurface of the discriminant of the characteristic
polynomial of a generic matrix. The degree of this hypersurface
is~$r(r-1)$ as can be checked by comparing the discriminant to the
resultant of the characteristic polynomial and its derivative.  Using
our results it is possible to express this degree geometrically
in terms of the $(r-2)$-nd Chern class of the tangent bundle
on~$\P^{r-1}$.

We define the \emph{discriminant}
$\mathcal{D} \subseteq \P(\K^{r\times r})$ to be the Zariski closure
of the set
\[
\left\{\left. [A]  \in \P(\K^{r\times r}) \ \right| \ \mbox{$A$ is non-diagonalizable}  \right\}.
\] 
\begin{theorem}
The discriminant $\mathcal{D}$ is a hypersurface of degree $2
c_{r-2}(T_{\P^{r-1}})$.
\end{theorem}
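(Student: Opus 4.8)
The plan is to identify the discriminant $\mathcal{D}$ with the locus of matrices $A$ for which the global section $s_A$ of $T_{\P^{r-1}}$ fails to have a reduced zero scheme, and then to compute the degree of that locus using the theory of dependency loci of global sections together with the curve $E$ studied in Proposition~\ref{prop:genus}. By Corollary~\ref{cor:diagonalizability-general} and the remark following it, $A$ is non-diagonalizable if and only if the zero scheme $(s_A)_0 \subseteq \P^{r-1}$ is non-reduced, equivalently if and only if $(s_A)_0$ is not a set of $r$ distinct reduced points. So $\mathcal{D}$ is the image in $\P(\K^{r\times r})$ of the locus of sections with a non-transverse zero. Recall that $H^0(\P^{r-1}, T_{\P^{r-1}}) \simeq \K^{r\times r}/\K$, so $\P(\K^{r\times r})$ is essentially the projectivization of the space of global sections (the kernel consisting of scalar matrices does not meet the affine cone over a section with prescribed zero behavior in a way that affects the degree count; one checks that the map $\K^{r\times r}\dashrightarrow \P H^0(T_{\P^{r-1}})$ is linear and generically finite onto its image).

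First I would set up the incidence correspondence. Fix a pencil: given two generic sections $s_0, s_1$, Lemma~2.5 of~\cite{ein} says their dependency locus $E$ is a smooth curve of genus $(r-1)(r-2)/2$ (Proposition~\ref{prop:genus}). On $E$, the two sections span a line bundle quotient, and the "ramification" — the points of $E$ where $s_0$ and $s_1$ become proportional in a degenerate way, or where the zero of some $\lambda s_0 + \mu s_1$ fails to be reduced — is governed by a section of a line bundle on $E$ whose degree is a Chern-class expression. Concretely, for a general pencil $\langle s_0, s_1\rangle \subseteq H^0(T_{\P^{r-1}})$, the number of members $s_\lambda = \lambda s_0 + \mu s_1$ with a non-reduced zero scheme equals the degree of $\mathcal{D}$ (intersecting the line $\P^1 \hookrightarrow \P H^0(T_{\P^{r-1}})$ with $\mathcal{D}$). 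The strategy is to count this number on the curve $E$: a member $s_\lambda$ has a double point in its zero scheme exactly when the restriction of $s_\lambda$ to $E$ vanishes at a point where the derivative also vanishes, i.e. at a ramification point of the degree-$r$ map $E \to \P^1$ that sends a point of $E$ to the parameter $[\lambda:\mu]$ of the (unique) member of the pencil vanishing there. This map has degree $r = c_{r-1}(T_{\P^{r-1}})$ since the base locus of a general pencil is the common zero of a general section, which is $r$ reduced points.

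Then I would apply Riemann–Hurwitz to the degree-$r$ map $f\colon E \to \P^1$. We have $2g(E) - 2 = r(2\cdot 0 - 2) + \deg(\text{ram})$, hence $\deg(\text{ram}) = 2g(E) - 2 + 2r = (r-1)(r-2) - 2 + 2r = r^2 + r - 4 + 2 = r^2 + r - 2$... at which point a more careful bookkeeping is needed: the ramification divisor of $f$ as a map to $\P^1$ counts pairs (member of pencil with a double zero), but each non-diagonalizable member in a generic pencil contributes its minimal degeneration, a single simple ramification point, so $\deg(\mathcal D) = \deg(\text{ram of } f)$ only after subtracting the contribution that is an artifact of the base points. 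The cleaner route, which I would actually carry out, bypasses $E$ and instead uses the general theory of second Chern classes of twisted bundles: the locus in $\P^1$ over which $s_\lambda$ is non-reduced is the degeneracy locus of the $1$-jet map $j^1 s_\lambda$, and a standard computation (Porteous, or the splitting principle applied to the normal bundle of the $r$ base points) shows this locus has degree $2 c_{r-2}(T_{\P^{r-1}})$ in the pencil parameter. One then checks the factor of $2$ directly: at a generic point of $\mathcal{D}$, the matrix $A$ has a single $2\times 2$ Jordan block and the discriminant of the characteristic polynomial vanishes to order $1$ there, while the defining equation of the non-reducedness locus (expressed via $c_{r-2}$) vanishes to order that accounts for the two-to-one nature of the local model near the doubled eigenvector.

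The main obstacle I anticipate is exactly this multiplicity-$2$ issue: relating the scheme structure of $\mathcal{D}$ (cut out set-theoretically by non-diagonalizability, scheme-theoretically one expects by the discriminant of the characteristic polynomial, of degree $r(r-1)$) to the Chern-class number $c_{r-2}(T_{\P^{r-1}})$, and verifying that $r(r-1) = 2 c_{r-2}(T_{\P^{r-1}})$. This last numerical identity is a clean check: $c(T_{\P^{r-1}}) = (1+h)^r$, so $c_{r-2}(T_{\P^{r-1}}) = \binom{r}{r-2} = \binom{r}{2} = r(r-1)/2$, whence $2c_{r-2}(T_{\P^{r-1}}) = r(r-1)$, matching the known degree stated in the text. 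So the real content is the geometric identification — showing that the degree of $\mathcal{D}$ is computed by the second-highest Chern class of $T_{\P^{r-1}}$ with the correct coefficient $2$ — for which I would combine the dependency-locus description of $E$, the genus computation already in hand, and a Porteous-type formula for the locus of sections with a non-transverse zero.
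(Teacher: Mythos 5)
Your overall strategy is the paper's: restrict to a generic pencil, pass to the dependency curve $E$, and count non-reduced members via ramification of the degree-$r$ map $f\colon E\to\P^1$. But the proof as written does not close, for two reasons. First, the Riemann--Hurwitz computation you abandoned is in fact correct and finishes the argument: $\deg(\mathrm{ram}) = 2g(E)-2+2r = (r-1)(r-2)-2+2r = r^2-3r+2-2+2r = r^2-r = r(r-1) = 2c_{r-2}(T_{\P^{r-1}})$. Your intermediate value $r^2+r-2$ is an arithmetic slip, and the ``artifact of the base points'' you then try to subtract does not exist: two generic sections of $T_{\P^{r-1}}$ have empty common zero locus (expected codimension $2(r-1)>r-1$), so the pencil is base-point-free on $E$, the fiber of $f$ over $[\lambda:\mu]$ is exactly the zero scheme of $\lambda s_0+\mu s_1$, and by Corollary~\ref{cor:diagonalizability-general} that member is non-diagonalizable precisely when the fiber is non-reduced, i.e.\ when $[\lambda:\mu]$ is a branch point. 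The one genuine point you must still record is that, by genericity of the pencil, every ramification point is simple and distinct members ramify at distinct points, so the number of branch points equals $\deg(\mathrm{ram})$ rather than being merely bounded by it; with that observation and the genus value from Proposition~\ref{prop:genus}, the count $r(r-1)=2c_{r-2}(T_{\P^{r-1}})$ follows. This is precisely the paper's proof.

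Second, the ``cleaner route'' you propose instead is circular as stated: you assert that ``a standard computation (Porteous \dots) shows this locus has degree $2c_{r-2}(T_{\P^{r-1}})$,'' which is the statement to be proved. Likewise the closing discussion of the factor $2$ via the order of vanishing of the characteristic-polynomial discriminant is a plausibility check, not an argument. The numerical identity $2\binom{r}{2}=r(r-1)$ is correct but is not a substitute for the geometric count. Restoring the Riemann--Hurwitz computation fixes everything; the Porteous detour should be dropped or actually carried out.
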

\begin{proof}
First note that $c_{r-2}(T_{\P^{r-1}})=r(r-1)/2$ (see Example 3.2.11
in \cite{fulton}). We need to show that the intersection of
$\mathcal{D}$ with a generic line in $\P(\K^{r\times r})$ consists of
$r(r-1)$ points. We do this by considering a pencil
\[
L = \left\{[ \lambda B+ \mu C] \ \left| \  [\lambda : \mu] \in \P^1\right. \right\}
\]
formed by two generic $r \times r$ matrices $B$ and $C$ and counting
the number of elements in $L$ that correspond to non-diagonalizable
matrices.

Given a matrix $A$, recall that $Z_A$ denotes the corresponding eigenscheme. Let $\varSigma$ be the incidence correspondence: 
\[
\varSigma = \left\{\left. \left([A], [\boldsymbol v]\right) \in \P(\K^{r \times r}) \times \P^{r-1} \ \right| \ \mbox{$[\boldsymbol v] \in Z_A$}  \ \right\}, 
\]
and let $\pi_1$ and $\pi_2$ be the projections from $\varSigma$ to $\P(\K^{r \times r})$ and $\P^{r-1}$ respectively. Consider the following subset of $\varSigma$: 
\[
\varSigma_0 = \left\{\left. \left([A], [\boldsymbol v]\right) \in \P(\K^{r \times r}) \times \P^{r-1} \ \right| \ \mbox{$Z_A$ is singular at $[\boldsymbol v]$}  \ \right\}. 
\] 
Corollary~\ref{cor:diagonalizability-general} implies that $A$ is not diagonalizable if and only if $Z_A$ is singular. This implies that $\mathcal{D}$ can be identified with the Zariski closure of ${\pi_1(\varSigma_0)}$. 

Let $E = \pi_2 (\pi_1^{-1} (L))$. Then $E$ is a curve, which is obtained as the dependency locus of two global sections of $T_{\P^{r-1}}$. 
As was mentioned before, since $s_B$ and $s_C$ are generic because of the choices of $B$ and $C$, Lemma~2.5 in~\cite{ein} implies that $E$ is non-singular of codimension $r-2$ or $E$ is a non-singular curve. Furthermore, the degree of $E$ is $c_{r-2}(T_{\P^{r-1}}) = r(r-1)/2$ (see, for example, Example 14.4.1 in \cite{fulton}). 

The pencil of the divisors $(s_B)_0$ and $(s_C)_0$ of $E$ defines an $r:1$ morphism $\varPhi$ from $E$ to~$L$. To find the number of elements of $L$ that correspond to non-diagonalizable matrices, it is therefore equivalent to finding the number of branch points of $\varPhi$. 

Let $P$ be the ramification divisor of $\varPhi$. Then it follows from the Riemann-Hurwitz formula (see, for example, Corollary~2.4 in~\cite[Chapter~4]{hartshorne}) that 
\[
2g(E) - 2 = r(2g(L)-2) + \deg P =\deg P  -2r,  
\] 
where $g(E)$ and $g(L)$ are genera of $E$ and $L$ respectively. Notice that, because of the genericity of $B$ and $C$, the ramification index for a point of $E$ is at most two. Therefore, the number of branch points of $\varPhi$ equals $\deg P = 2g(E)+2(r-1)$. Thus, by Proposition~\ref{prop:genus}, we get
\[
\deg P = 2g(E)+2(r-1) = (r-1)(r-2)+2(r-1) = r(r-1) = 2c_{r-2}(T_{\P^{r-1}}), 
\]
which completes the proof. 
\end{proof}
\begin{remark}
In~\cite{ASS}, the authors used similar ideas to generalize Theorem~6.1 to tensors, where the formula for the degree of the discriminant of a tensor (i.e., the closure of the locus of tensors that have fewer eigenvectors than expected) is given (see~\cite[Corollary~4.2]{ASS}).  
\end{remark}




\begin{thebibliography}{aaa}
%
\bibitem{ASS}
H.~Abo, A.~Seigal, and B.~Sturmfels, 
{\it Eigenconfigurations of tensors}, 
preprint, 
{\tt arXiv:1505.05729}. 
%
\bibitem{catalano-johnson}
M.~L.~Catalano-Johnson, 
{\it The resolution of the ideal of $2\times 2$ minors of a $2\times n$ matrix of linear forms}, 
 J. Algebra {\bf 187} (1997), no. 1, 39--48
%
\bibitem{ein}
L.~Ein, 
{\it Some stable vector bundles on $\P^4$ and $\P^5$}, 
  J.~Reine Angew.~Math. {\bf 337} (1982), 142--153.
%
\bibitem{eisenbud-sturmfels}
D.~Eisenbud and B.~Sturmfels, 
{\it Binomial ideals}, 
Duke Math. J. {\bf 84} (1996), no. 1, 1--45.
%
\bibitem{fulton}
W.~Fulton, 
{\it Intersection theory. Second edition}, 
Ergebnisse der Mathematik und ihrer Grenzgebiete. 3. Folge. A Series
of Modern Surveys in Mathematics, 2. Springer-Verlag, Berlin,
1998. xiv+470 pp.
%
\bibitem{M2}
D.~Grayson and M.~Stillman,
{\it Macaulay2, a software system for research 
                   in algebraic geometry},
Available at  \href{http://www.math.uiuc.edu/Macaulay2/}
                   {http://www.math.uiuc.edu/Macaulay2/}
%
\bibitem{harris}
J.~Harris, 
{\it Algebraic geometry, A first course.} Graduate Texts in Mathematics, 133. Springer-Verlag, New York, 1992. xx+328 pp. 
%
\bibitem{hartshorne}
R.~Hartshorne, 
{\it Algebraic geometry}, 
 Graduate Texts in Mathematics, No. 52. Springer-Verlag, New York-Heidelberg, 1977. xvi+496 pp. 
 %
\bibitem{kahle2012}
T.~Kahle, 
{\it Decompositions of binomial ideals}, 
 J. Softw. Algebra Geom. {\bf 4} (2012), 1--5.
 %
\bibitem{kahle2010} 
\bysame, 
{\it Decompositions of binomial ideals}, 
Ann. Inst. Statist. Math {\bf 62} (2010), no. 4, 727--745. 
%
\bibitem{KM2014}
T.~Kahle and E.~Miller,
{\it Decompositions of commutative monoid congruences and binomial ideals},
Algebra \& Number Theory {\bf 8} (2014), no.~6, 1297--1364.
%
\bibitem{lim}
L.~H.~Lim, {\it Singular values and eigenvalues of tensors: a variational approach}, 
Proceedings of the IEEE International Workshop on Computational Advances in Multi-Sensor Adaptive Processing (CAMSAP '05), 1 (2005), pp. 129--132. 
%
\bibitem{castilla-sanchez}
I.~O.~ Mart\'{\i}nez de Castilla and R.~P.~S\'anchez, 
{\it Cellular binomials. Primary decomposition of binomial ideals}, 
 J. Symbolic Comput. {\bf 30} (2000), no. 4, 383--400. 
%
\bibitem{nguyen-thieu-vu}
H.~D.~Nguyen, P.~D.~Thieu, and T.~Vu, 
{\it Koszul determinantal rings and $2 \times e$ matrices of linear forms}, 
to appear in Michigan Math. J., {\tt arXiv:1309.4698v2}. 
%
\bibitem{ottaviani-oeding}
L.~Oeding and G.~Ottaviani, 
{\it Eigenvectors of tensors and algorithms for Waring decomposition}, 
J. Symbolic Comput. {\bf 54} (2013), 9--35.
%
\bibitem{ORS}
L.~Oeding, E.~Robeva and B.~Sturmfels, 
Decomposing tensors into frames, preprint, {\tt arXiv:1504.08049}.
%
\bibitem{qi}
L.~Qi, {\it Eigenvalues of a real supersymmetric tensor}, 
 J. Symbolic Comput. {\bf 40} (2005), no. 6, 1302--1324.
 %
\bibitem{okonek-schneider-spindler}
C.~Okonek, M.~Schneider, and H.~Spindler, 
{\it Vector bundles on complex projective spaces. Corrected reprint of the 1988 edition. With an appendix by S. I. Gelfand}, 
 Modern Birkh\"auser Classics. Birkh\"auser/Springer Basel AG, Basel, 2011. viii+239 pp. 
 %
 \bibitem{Rob} E.~Robeva, 
{\em Orthogonal decomposition of symmetric tensors}, 
SIAM J.~Matrix Anal. Appl. {\bf 37} (1) (2016) 86--102. 
%
\bibitem{zaare-nahandi}
 R.~Zaare-Nahandi and R.~Zaare-Nahandi, 
 {\it  Gr\"obner basis and free resolution of the ideal of $2$-minors of a $2 \times n$ matrix of linear forms},
  Comm. Algebra {\bf 28} (2000), no. 9, 4433--4453.
\end{thebibliography}
\end{document}